\documentclass[10pt, reqno]{amsart}
\usepackage[utf8]{inputenc}
\usepackage[english]{babel}

\usepackage{amsmath,amssymb,amsfonts}
\usepackage{amsthm}
\usepackage{float}
\usepackage{url}
\usepackage[hidelinks]{hyperref}
    
\theoremstyle{plain}
\newtheorem{theorem}{Theorem}[section]
\newtheorem{prop}[theorem]{Proposition}
\newtheorem{lemma}[theorem]{Lemma}
\newtheorem{corollary}[theorem]{Corollary}
\theoremstyle{definition}
\newtheorem{definition}[theorem]{Definition}
\newtheorem{example}[theorem]{Example}
\newtheorem{remark}[theorem]{Remark}

\usepackage{mathtools}
\usepackage{enumitem}
\usepackage[mathscr]{euscript}

\title[Spectrality of induced operations]{Closure operators on semilattice-ordered semigroups and spectrality of induced operations}

\author[D. Siejwa]{Damian Siejwa}
\address{Faculty of Mathematics and Information Science\\
Warsaw University of Technology\\
00-661 Warsaw, Poland}

\email{damian.siejwa.dokt@pw.edu.pl}

\subjclass[2020]{Primary 06F05; Secondary 06A15, 54H12}
\keywords{Semilattice-ordered semigroup, closure operator, hull-kernel topology, spectral space, patch topology, spectral map}

\begin{document}
\begin{abstract}
    Let $S$ be a semilattice-ordered semigroup and let $\mathrm{cl}$ be a~closure operator on $S$. We consider the space $$X := \{A \in \mathcal{P}(S) \mid A^\mathrm{cl} = A\}$$ of all $\mathrm{cl}$-closed subsets of $S$, endowed with the subspace topology induced by the hull-kernel topology on $\mathcal{P}(S)$. We prove that $X$ is a spectral space and a retrocompact subset of $\mathcal{P}(S)$ if and only if $\mathrm{cl}$ is algebraic. Assuming that $\mathrm{cl}$ is algebraic, we then investigate the operation $$A \star B := (AB)^\mathrm{cl}$$ induced on $X$ by the multiplication on $S$. We obtain finite characterizations of spectrality of the map $\star: X \times X \to X$ in terms of finite subsets of $S$ and finitely generated $\mathrm{cl}$-closed subsets. Analogous criteria are established for left and right translations. We prove that if the underlying poset $(S, \leqslant)$ is well-quasi-ordered and the closure operator $\mathrm{cl}$ is algebraic and order-compatible, then the induced operation $\star$ is spectral. For the identity closure operator, we characterize spectrality of $\star$ by the finite decomposition property. We also show that spectrality of $\star$ implies spectrality of all left and right translations, whereas the converse fails in general, even for an algebraic multiplicative closure operator. Finally, we introduce three closure operators naturally associated with semilattice-ordered semigroups, study their algebraic and multiplicative properties, and apply the general results to the corresponding spectral spaces and induced operations.  
\end{abstract}
\maketitle
\section{Introduction}
Many constructions in algebra lead naturally to topological spaces whose points represent algebraic objects ordered by inclusion. The prime spectrum of a commutative ring is the guiding example: closed sets are described by containment, the space is quasi-compact, and the specialization relation reflects inclusion among prime ideals. Hochster provided a topological characterization of such spectra by introducing the notion of a spectral space, which is a quasi-compact, $T_0$, sober topological space such that the family of all quasi-compact open subsets is closed under finite intersections and forms a basis of open subsets \cite{Hochster}. Spectral spaces have since become a standard object of study in both topology and algebra. 

Although the prime spectrum endowed with the hull-kernel topology (also called the Zariski topology) is always quasi-compact and $T_0$, it is in general far from being Hausdorff (it is Hausdorff only in the zero-dimensional case \cite[Theorem 1.3]{Maroscia}). Many authors therefore consider a finer topology on the same underlying set, namely the patch (or constructible) topology. It is the coarsest refinement of the spectral topology in which all quasi-compact open subsets become clopen. One reason for its importance is that it provides a canonical quasi-compact Hausdorff topology. A further reason is that the patch topology interacts tightly with the specialization order: a spectral space is uniquely determined by its patch topology together with its specialization preorder. This fact is the basis for an alternative approach to spectral spaces via Priestley spaces and Priestley duality \cite{Dickmann_Schwartz_Tressl_2019, Priestley, Craig, JungRivieccio}. 

Closure operators provide a natural framework for constructing and studying spectral spaces. Given a closure operator on a family of subsets, one may consider its fixed points as a space ordered by inclusion and equipped with the hull-kernel topology. This perspective appears in several algebraic settings \cite{JunRayTolliver, FinocchiaroFontanaSpiritoNullstellensatz, FinocchiaroFontanaSpirito, Banerjee}, where some additional finite type assumptions ensure that the resulting space is spectral.

The present paper develops this point of view for semilattice-ordered semigroups. These structures are closely related to additively idempotent semirings, where the idempotent addition is typically interpreted as a join. However, we do not require an additive identity to exist. A related closure-theoretic approach in that setting was developed by Jun, Ray and Tolliver \cite{JunRayTolliver}. Topological properties of semilattice-ordered semigroups were already studied by Calude \cite{calude1976_1}. More recently, hull-kernel topologies on prime ideals in ordered semigroups have been investigated in \cite{huanrong, luangchaisri_changphas}, continuing the general idea that spectrum spaces with hull-kernel topology remain meaningful beyond commutative rings. 

Let $S$ be a semilattice-ordered semigroup and let $\mathrm{cl}$ be a closure operator on $S$. We consider the set $$X := \{A \in \mathcal{P}(S) \mid A^\mathrm{cl} = A\},$$ endowed with the subspace topology induced by the hull-kernel topology on $\mathcal{P}(S)$. We first establish the topological properties of this space. Namely, we show that $X$ is a spectral space and a retrocompact subset of $\mathcal{P}(S)$ if and only if the closure operator $\mathrm{cl}$ is algebraic (see Theorem \ref{construction_of_spectral_space}). This result provides the topological setting in which we subsequently study operations induced on $X$ by the multiplication of $S$.

Assuming that $\mathrm{cl}$ is algebraic, we consider the binary operation $$\star: X \times X \to X, \qquad A\star B := (AB)^\mathrm{cl}.$$ We obtain several equivalent finite characterizations of spectrality of $\star$ (see Corollary \ref{equivalent_conditions_on_spectrality}) and derive sufficient conditions based on well-quasi-ordering (see Proposition \ref{well_quasi_order_implies_spectrality} and Corollary \ref{algebraicity_order_compatibility_and_well_quasi_order_on_original_set_imply_spectrality_of_star}). We also investigate the relation between spectrality of $\star$ and spectrality of its left and right translations. In the case of the identity closure operator, we characterize spectrality of $\star$ in terms of the finite decomposition property of $S$ (see Theorem \ref{spectrality_is_equivalent_to_finite_decomposition_property}). 

\medskip 
\noindent\textbf{Plan of the paper.} Section \ref{background} contains the necessary preliminaries on semilattice-ordered semigroups, closure operators and spectral spaces. In particular, we recall the hull-kernel topology on $\mathcal{P}(S)$, the patch topology on a spectral space and the ultrafilter criterion for spectrality. 

Section \ref{spectral_spaces_arising_from_closures} is devoted to the space $X$. We show that algebraicity of the closure operator $\mathrm{cl}$ is equivalent to spectrality and retrocompactness of the associated space of $\mathrm{cl}$-closed sets. We also give an example showing that the algebraicity assumption is essential. 

In Section \ref{topological_semigroups_whose_underlying_space_is_spectral} we assume that $\mathrm{cl}$ is algebraic and study spectrality of the operation $\star~: X~\times~X~\rightarrow X$ induced by the multiplication on $S$. We derive finite characterizations of spectrality of $\star$, establish analogous criteria for left and right translations, and obtain sufficient conditions for spectrality from well-quasi-order assumptions. We also consider the case in which $(X, \star)$ is a semilattice. Finally, for the identity closure operator, we characterize spectrality of $\star$ by the finite decomposition property and obtain corresponding criteria for spectrality of translations. 

In Section \ref{closures_on_semigroups} we introduce three closure operators naturally associated with semilattice-ordered semigroups: the downward-join closure, the join-radical closure and the join-bi-ideal closure. We establish their basic properties and determine which of them are algebraic and which are multiplicative.

Finally, in Section \ref{applications} we apply the general results proved earlier to the closure operators introduced in Section \ref{closures_on_semigroups}. We thereby obtain concrete examples of spectral spaces of $\mathrm{cl}$-closed subsets. In particular, we show that spectrality of all left and right translations does not imply spectrality of $\star$ in general, even for an algebraic and multiplicative closure operator. 

\medskip
\noindent\textbf{Notational conventions.} Throughout the paper, $\mathbb{Z}$ denotes the set of all integers, $\mathbb{N}$ the set of all nonnegative integers and $\mathbb{N}_+$ the set of all positive integers. For a set $S$ we write $\mathcal{P}(S)$ for its power set and $\mathcal{P}_\mathrm{fin}(S)$ for the family of all finite subsets of $S$. If $f : X \rightarrow Y$ is a map, $A \subseteq X$ and $B \subseteq Y$, then $f(A)$ denotes the image of $A$ under $f$, while $f^{-1}(B)$ denotes the preimage of $B$ under $f$. 
\section{Background}\label{background}  
In this section we recall basic notions and facts that will be used throughout the paper. We begin with semilattice-ordered semigroups and the notation associated with the partial order arising from the semilattice structure. We then provide the main definitions and facts about spectral spaces, patch and hull-kernel topologies. Readers familiar with these topics may skip to the next section. 
\subsection{Semilattice-ordered semigroups}
A \emph{semilattice-ordered semigroup} is a triple $(S, \cdot, \lor)$, where $S$ is a nonempty set, $(S, \cdot)$ is a semigroup, $(S, \lor)$ is a semilattice and $$x(y \lor z) = xy \lor xz, \qquad (x \lor y)z = xz \lor yz$$ for all $x,y,z \in S$. Any semilattice-ordered semigroup is a partially ordered set in which the binary relation $x \leqslant y$ is defined as follows: $$x \leqslant y \iff x \lor y = y.$$ It is easy to see that if $x \leqslant y$ and $u \leqslant v$, then $x \lor u \leqslant y \lor v$ and $xu \leqslant yv$. For subsets $A, B\subseteq S$ we write $AB := \{ab \mid a\in A, b\in B\}$. 

\medskip \noindent 
\textbf{Relation to additively idempotent semirings.} Semilattice-ordered semigroups are closely related to \emph{additively idempotent semirings}, but they are more general. Indeed, if $(R, +, \cdot)$ is an additively idempotent semiring, then the addition defines a semilattice operation by $x \lor y := x + y$, and $(R, \cdot, \lor)$ is a semilattice-ordered semigroup. However, the converse does not hold in general, since a semiring also requires a distinguished additive identity. 
\subsection{Closure operators}
Let $S$ be a semilattice-ordered semigroup. A \emph{closure operator} on $S$ is a map $\mathrm{cl} : \mathcal{P}(S) \rightarrow \mathcal{P}(S)$ such that for all $A, B \in\mathcal{P}(S)$: 
\begin{enumerate}[label=(\arabic*)]
    \item (\emph{Extension}) $A \subseteq\mathrm{cl}(A)$, 
    \item (\emph{Idempotence}) $\mathrm{cl}(A) = \mathrm{cl}(\mathrm{cl}(A))$, 
    \item (\emph{Order-preservation}) $A \subseteq B \implies \mathrm{cl}(A) \subseteq \mathrm{cl}(B)$. 
\end{enumerate}
For notational convenience, we write $A^{\mathrm{cl}} := \mathrm{cl}(A)$ for every $A \in \mathcal{P}(S)$. A subset $A \in \mathcal{P}(S)$ is said to be \emph{$\mathrm{cl}$-closed} if $A^\mathrm{cl} = A$. A closure operator $\mathrm{cl}$ on $S$ is said to be \emph{algebraic} (or \emph{of finite type}) if $A^{\mathrm{cl}} = \bigcup\{F^{\mathrm{cl}} \mid F\in\mathcal{P}_{\mathrm{fin}}(A)\}$ for any $A \in \mathcal{P}(S)$. 
\subsection{Spectral spaces}
We now recall several standard facts concerning spectral spaces. Our aim is not to give a comprehensive account, but rather to collect the specific tools needed later. 
\begin{definition}\cite[Proposition 4]{Hochster}
    A \emph{spectral space} is a quasi-compact and $T_0$ topological space~$X$ such that: 
    \begin{enumerate}[label=(\arabic*)]
        \item any finite intersection of quasi-compact open subsets of $X$ is a quasi-compact open subset and the family of all quasi-compact open subsets of $X$ forms a basis of $X$, 
        \item any nonempty irreducible closed subset of $X$ has a unique generic point. 
    \end{enumerate}
    A topology on a spectral space $X$ is called a \emph{spectral topology}. For more information about spectral spaces, see \cite{Dickmann_Schwartz_Tressl_2019}.
\end{definition}
\begin{definition}\cite[Section 2]{Finocchiaro2019SupremaIS}
    Let $X, Y$ be spectral spaces. 
    \begin{enumerate}[label=(\arabic*)]
        \item A map $f : X \rightarrow Y$ is said to be \emph{spectral} if for every quasi-compact open subset $V$ of $Y$, $f^{-1}(V)$ is quasi-compact and open.
        \item A subset $W$ of $X$ is said to be \emph{retrocompact} in $X$ if for every quasi-compact open subset $U$ of $X$, $U \cap W$ is quasi-compact.
        \item A \emph{patch} (or \emph{constructible}) \emph{topology} on $X$ is the coarsest topology for which all quasi-compact open subsets of $X$ are clopen. We denote by $X^\mathrm{cons}$ the space $X$ with the patch topology.
    \end{enumerate}  
\end{definition} 
\noindent The patch topology plays a central role in the modern theory of spectral spaces: it is the coarsest refinement in which quasi-compact opens become clopen, and it is particularly well-suited for continuity questions about algebraically defined operations. We shall use it in a similar spirit as in \cite{Finocchiaro2019SupremaIS}, where patch topologies and spectral maps are used to relate an algebraic structure to topological properties. 

Now we provide some properties of the patch topology.
\begin{lemma}\cite[Section 08YF]{stacks-project}\label{spectrality_implies_continuity_in_the_patch_topology}
    Let $f: X \to Y$ be a spectral map of spectral spaces. Then $f$ is continuous in the patch topology. 
\end{lemma}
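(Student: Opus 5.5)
The plan is to use the definition of the patch topology directly: a map into a space carrying the coarsest topology that makes a given family of sets open is continuous exactly when the preimages of the sets in that family (a subbasis) are open. By definition, $Y^\mathrm{cons}$ carries the coarsest topology in which every quasi-compact open subset $V$ of $Y$ is clopen. So this topology is generated by the subbasis
$$\{V \mid V \subseteq Y \text{ quasi-compact open}\} \cup \{Y \setminus V \mid V \subseteq Y \text{ quasi-compact open}\}.$$
It therefore suffices to show that for every quasi-compact open $V \subseteq Y$, both $f^{-1}(V)$ and $f^{-1}(Y\setminus V)$ are open in $X^\mathrm{cons}$.

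Fix a quasi-compact open $V \subseteq Y$. Since $f$ is spectral, $U := f^{-1}(V)$ is a quasi-compact open subset of $X$. By definition of the patch topology on $X$, $U$ is clopen in $X^\mathrm{cons}$. Hence $f^{-1}(V) = U$ is open in $X^\mathrm{cons}$. Likewise, $f^{-1}(Y \setminus V) = X \setminus U$ is the complement of a clopen set, so it is also open in $X^\mathrm{cons}$.

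Preimages commute with finite intersections and arbitrary unions. Consequently the preimage of every basic open set (a finite intersection of subbasic sets) is open, and so is the preimage of every open set of $Y^\mathrm{cons}$. This proves that $f : X^\mathrm{cons} \to Y^\mathrm{cons}$ is continuous.

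There is no real obstacle in this argument. The only point needing care is the standard fact that a topology defined as the coarsest one making a family of sets clopen is generated, as a subbasis, by those sets together with their complements; this is immediate from the definition. Note that the argument uses only the defining property of spectral maps, that preimages of quasi-compact opens are quasi-compact open. It does not need the full spectral-space axioms beyond the fact that the patch topologies are defined.
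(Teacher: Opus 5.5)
Your proof is correct. The paper gives no proof of its own and simply cites the Stacks project, where this is noted as following immediately from the definitions. Your argument is that standard one: the patch topology is generated by the quasi-compact opens and their complements, and a spectral map pulls each such set back to a set of the same kind, which is open in $X^\mathrm{cons}$.
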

\begin{theorem}\cite[Theorem 1]{Hochster}\label{spectral_space_with_patch_top_is_compact}
    Let $X$ be a spectral space. Then the space $X$ with the patch topology is quasi-compact and Hausdorff. 
\end{theorem}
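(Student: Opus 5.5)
The plan is to follow Hochster's original strategy: first identify a convenient subbasis of the patch topology, then prove the two properties separately. By definition, the patch topology on $X$ is generated by the sets $U$ and $X \setminus U$, where $U$ ranges over the quasi-compact open subsets of $X$. Since these sets are closed under finite intersections, the finite intersections of such sets and their complements form a basis. I would first record that this basis consists of patch-clopen sets.

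For the Hausdorff property, I take distinct points $x \neq y$. Because $X$ is $T_0$, one of them, say $x$, lies in an open set $W$ that does not contain $y$. Since the quasi-compact open sets form a basis, we may shrink $W$ to a quasi-compact open $U$ with $x \in U$ and $y \notin U$. Then $U$ and $X\setminus U$ are disjoint patch-open sets separating $x$ and $y$. This step is routine.

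For quasi-compactness, I would apply the Alexander subbase theorem to the subbasis $\{U\} \cup \{X \setminus U\}$, with $U$ quasi-compact open. It then suffices to show that any family of subbasic patch-closed sets with the finite intersection property has nonempty intersection. The subbasic closed sets are the quasi-compact opens $U_i$ and the complements $C_j = X \setminus V_j$, which are closed in the original topology. Suppose the family has the finite intersection property. By Zorn's lemma, enlarge it to a maximal family $\mathcal{F}$ of subbasic closed sets with the finite intersection property. Then let $Z$ be the intersection of all the $C_j$ in $\mathcal{F}$; it is closed in $X$.

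The next step is to show that $Z$ is nonempty and irreducible. Let $\mathcal{U}$ be the set of quasi-compact opens in $\mathcal{F}$. Every finite subfamily of $\mathcal{U}$ has an intersection meeting every finite intersection of the $C_j$. Since each finite intersection of members of $\mathcal{U}$ is quasi-compact and the $C_j$ are closed, each such finite intersection meets $Z$. In particular $Z\neq\emptyset$, using the quasi-compactness of $X$ itself.

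For irreducibility, suppose $Z \cap U \ne \emptyset$ and $Z\cap U'\neq\emptyset$ for basic quasi-compact opens $U,U'$, but $Z\cap U\cap U'=\emptyset$. Maximality of $\mathcal{F}$ shows that $U$ (respectively $X \setminus U$) belongs to $\mathcal{F}$ iff adding it preserves the finite intersection property. A compactness argument then shows that $U\in\mathcal{F}$ and $U'\in\mathcal{F}$. Since $U \cap U'$ is quasi-compact open, we get $X\setminus(U\cap U')\in\mathcal{F}$, and this contradicts the finite intersection property.

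Finally, let $z$ be the generic point of $Z$, which exists by sobriety. Then $z\in C_j$ for all $j$. For each $U\in\mathcal{U}$, $U$ is open and meets $Z = \overline{\{z\}}$, so $z\in U$. Thus $z$ lies in every member of $\mathcal{F}$.

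The main obstacle is the irreducibility step. It requires careful use of the maximality of $\mathcal{F}$ together with the fact that the quasi-compact opens are stable under finite intersections. The rest of the argument is formal.
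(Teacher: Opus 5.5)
The paper gives no proof of this statement; it is quoted from Hochster. Your outline (Alexander subbase lemma, a maximal family $\mathcal{F}$ with the finite intersection property, $Z$ the intersection of its closed members, then the generic point of $Z$) is the standard argument, and several parts of it are correct: the Hausdorff part, the reduction via Alexander and Zorn, the proof that every finite intersection of members of $\mathcal{U}$ meets $Z$, and the final generic-point step. The gap is in the irreducibility step, at ``a compactness argument then shows that $U\in\mathcal{F}$''. To add $U$ to $\mathcal{F}$ you must show $U\cap W\cap C\neq\varnothing$ for every finite intersection $W$ of members of $\mathcal{U}$ and every finite intersection $C$ of the $C_j$. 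All you know is that $Z\cap U\neq\varnothing$ and $Z\cap W\neq\varnothing$ separately. Passing from these to $Z\cap U\cap W\neq\varnothing$ is exactly the irreducibility you are trying to prove, so compactness cannot supply this step and the argument as sketched is circular.

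What is missing is the ultrafilter-type dichotomy coming from maximality: for every quasi-compact open $V$, either $V\in\mathcal{F}$ or $X\setminus V\in\mathcal{F}$. To see this, suppose neither could be added. Then there would be finite intersections $A,B$ of members of $\mathcal{F}$ with $A\cap V=\varnothing$ and $B\cap(X\setminus V)=\varnothing$, so $A\cap B=\varnothing$, contradicting the finite intersection property.

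With the dichotomy, suppose $Z\cap U\neq\varnothing$. Then $X\setminus U\notin\mathcal{F}$, since otherwise $X\setminus U$ would be one of the $C_j$ and $Z\subseteq X\setminus U$. Hence $U\in\mathcal{F}$, and likewise $U'\in\mathcal{F}$. Your concluding step then goes through: $X\setminus(U\cap U')$ is a closed subbasic set containing $Z$, so adding it preserves the finite intersection property. More directly, $U\cap U'$ is now a finite intersection of members of $\mathcal{U}$, so your earlier step already gives $Z\cap U\cap U'\neq\varnothing$. With this lemma inserted the proof is complete.
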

For a spectral space $X$, the product $X \times X$ is again a spectral space. Moreover, the patch topology on $X \times X$ coincides with the product topology of the patch topologies on $X$ \cite[Section 2]{PICAVET1986527}, i.e. $$(X \times X)^\mathrm{cons} = X^\mathrm{cons} \times X^\mathrm{cons}.$$
\begin{definition}\cite[Section 0060]{stacks-project}
    Let $X$ be a topological space. 
    \begin{enumerate}[label=(\arabic*)]
        \item If $x, x'\in X$ and $x \in \overline{\{x'\}}$, then we say that $x$ is a \emph{specialization} of $x'$. 
        \item A subset $E \subseteq X$ is \emph{stable under specialization} if for all $x' \in E$ and every specialization $x$ of $x'$ we have $x \in E$. 
    \end{enumerate}
\end{definition}
\begin{lemma}\cite[Section 08YF]{stacks-project}\label{closed_in_patch_and_stable_under_specialization_is_closed}
    Let $X$ be a spectral space. If a subset $E \subseteq X$ is closed in the patch topology and stable under specialization, then $E$ is closed in the spectral topology.
\end{lemma}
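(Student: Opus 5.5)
The statement is a standard fact: in a spectral space $X$, a subset $E$ that is patch-closed and stable under specialization is closed in the spectral topology. I will argue via the quasi-compact opens and Theorem~\ref{spectral_space_with_patch_top_is_compact}. It suffices to show that every $y \notin E$ has a spectral-open neighbourhood disjoint from $E$.

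\textbf{Step 1: separating each point of $E$ from $y$.} Fix $y \notin E$ and let $x \in E$. Since $E$ is stable under specialization, $y \notin \overline{\{x\}}$. Hence there is a quasi-compact open $U_x$ with $y \in U_x$ and $x \notin U_x$. This uses that the quasi-compact opens form a basis: $X\setminus\overline{\{x\}}$ is an open set containing $y$.

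\textbf{Step 2: compactness of $E$ in the patch topology.} The family $\{X\setminus U_x\}_{x\in E}$ covers $E$. Each set $X \setminus U_x$ is patch-open, because quasi-compact opens are patch-clopen. Since $X^{\mathrm{cons}}$ is quasi-compact by Theorem~\ref{spectral_space_with_patch_top_is_compact} and $E$ is patch-closed, $E$ is patch-quasi-compact. So finitely many sets suffice: $E \subseteq \bigcup_{i=1}^n (X\setminus U_{x_i})$.

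\textbf{Step 3: the separating neighbourhood.} Put $U := U_{x_1}\cap\dots\cap U_{x_n}$. This is open in the spectral topology, it contains $y$, and $U \cap E = \emptyset$. Therefore $X\setminus E$ is open, and $E$ is closed.

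\textbf{Main obstacle.} The only delicate point is Step 1, namely correctly turning stability under specialization into $y\notin\overline{\{x\}}$. The set $\overline{\{x\}}$ consists of the specializations of $x$, so $\overline{\{x\}} \subseteq E$, and $y\notin E$ gives $y \notin \overline{\{x\}}$. Everything else follows from Theorem~\ref{spectral_space_with_patch_top_is_compact} and the basis property in the definition of a spectral space.
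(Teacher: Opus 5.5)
Your proof is correct. It uses only three facts: quasi-compact opens form a basis, they are patch-clopen, and $X^{\mathrm{cons}}$ is quasi-compact (Theorem~\ref{spectral_space_with_patch_top_is_compact}). The case $E=\varnothing$ is harmless.

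The paper gives no proof of its own here; it only cites the Stacks project. There the same compactness idea is run in the dual form, via the finite intersection property. For $x\in\overline{E}$, take the sets $E\cap U$ with $U$ ranging over quasi-compact open neighbourhoods of $x$. These are patch-closed, nonempty and closed under finite intersections, so some $y\in E$ lies in all of them. Hence every open neighbourhood of $x$ contains $y$, that is, $x\in\overline{\{y\}}$, and stability under specialization gives $\overline{E}\subseteq E$.

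Your open-cover version is the contrapositive of this argument, and it invokes the stability hypothesis earlier, in Step~1. The cited formulation yields a slightly stronger statement: for any patch-closed $E$, with no stability hypothesis, every point of $\overline{E}$ is a specialization of a point of $E$. Your version reaches the stated lemma just as directly and is fully self-contained given the background facts the paper records.
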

 
A useful tool for proving spectrality is a criterion involving ultrafilters, developed by Finocchiaro \cite{Finocchiaro03042014}. Before stating this criterion, we briefly recall the relevant notions of filters and ultrafilters on a set. 
\begin{definition}
    Given a set $S$ and a nonempty collection $\mathscr{F}$ of subsets of $S$, we say that $\mathscr{F}$ is a \emph{filter} on $S$ if the following properties hold: 
    \begin{enumerate}[label=(\arabic*)]
    \item $\varnothing \not\in\mathscr{F}$, 
    \item if $A, B \in \mathscr{F}$, then $A \cap B \in \mathscr{F}$, 
    \item if $A \in \mathscr{F}$ and $A \subseteq B \subseteq S$, then $B \in \mathscr{F}$. 
    \end{enumerate}
\end{definition}
\noindent A filter $\mathscr{F}$ is an \emph{ultrafilter} on $S$ if $\mathscr{F}$ is a maximal element (under inclusion) in the set of all filters on $S$. Equivalently, a filter $\mathscr{F}$ is an ultrafilter on $S$ if for each $A \subseteq S$, either $A \in\mathscr{F}$ or $S\setminus A \in\mathscr{F}$. We shall denote an ultrafilter by $\mathscr{U}$. 

\begin{theorem}\cite[Corollary 3.3]{Finocchiaro03042014}\label{ultrafilter_criterion}
    A topological space $X$ is spectral if and only if it satisfies the $T_0$-axiom and there is a subbasis $\mathbb{S}$ of $X$ such that 
    \begin{equation*}
        X_{\mathbb{S}}(\mathscr{U}) = \{x \in X \mid \forall S\in\mathbb{S} \hspace{3pt} x\in S \iff S \in\mathscr{U}\} \neq \varnothing
    \end{equation*}
    for any ultrafilter $\mathscr{U}$ on $X$. 
\end{theorem}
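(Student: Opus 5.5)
The forward implication follows from Theorem \ref{spectral_space_with_patch_top_is_compact}. For the converse I would verify each clause of the definition of a spectral space directly, using a single device throughout: extend a family of sets with the finite intersection property to an ultrafilter $\mathscr{U}$ on $X$, pick $x\in X_{\mathbb{S}}(\mathscr{U})$, and read off a contradiction or the desired point.

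\emph{($\Rightarrow$).} Let $X$ be spectral and let $\mathbb{S}$ be the family of all quasi-compact open subsets; it is a basis, hence a subbasis. Fix an ultrafilter $\mathscr{U}$ on $X$. Consider the family
\[
\mathcal{C}=\{Q\in\mathbb{S}\mid Q\in\mathscr{U}\}\cup\{X\setminus Q\mid Q\in\mathbb{S},\ Q\notin\mathscr{U}\}.
\]
If $Q\notin\mathscr{U}$ then $X\setminus Q\in\mathscr{U}$, so every member of $\mathcal{C}$ lies in $\mathscr{U}$. Hence every finite intersection of members of $\mathcal{C}$ lies in $\mathscr{U}$ and is nonempty. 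All members of $\mathcal{C}$ are clopen, in particular closed, in the patch topology. By Theorem \ref{spectral_space_with_patch_top_is_compact} the patch topology is quasi-compact, so $\bigcap\mathcal{C}\neq\varnothing$. Any point of this intersection belongs to $X_{\mathbb{S}}(\mathscr{U})$. Finally, $T_0$ is part of the definition of a spectral space.

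\emph{($\Leftarrow$).} Let $\mathcal{B}$ be the family of finite intersections of members of $\mathbb{S}$, together with $X$; this is a basis closed under finite intersections. The argument has four steps.

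\emph{Step 1: each $B\in\mathcal{B}$ is quasi-compact.} By Alexander's subbasis lemma it suffices to treat covers $B\subseteq\bigcup_i S_i$ with $S_i\in\mathbb{S}$. Suppose no finite subfamily covers $B$. Then $\{B\}\cup\{X\setminus S_i\}_i$ has the finite intersection property, so it extends to an ultrafilter $\mathscr{U}$. Take $x\in X_{\mathbb{S}}(\mathscr{U})$.
\begin{itemize}
\item Each subbasic factor of $B$ contains $B\in\mathscr{U}$, so lies in $\mathscr{U}$; hence $x$ lies in each factor, and $x\in B$.
\item Each $S_i\notin\mathscr{U}$, since its complement is in $\mathscr{U}$; hence $x\notin S_i$.
\end{itemize}
This contradicts $B\subseteq\bigcup_i S_i$. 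Taking $B=X$ gives quasi-compactness of $X$.

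\emph{Step 2: the quasi-compact opens form a lattice and a basis.} A quasi-compact open set is a finite union of members of $\mathcal{B}$. Since $\mathcal{B}$ is closed under finite intersections, so are such finite unions. Thus the quasi-compact opens are closed under finite intersections and form a basis.

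\emph{Step 3: sobriety, the step I expect to be the main obstacle.} Let $C$ be a nonempty irreducible closed set. The family $\{C\}\cup\{S\cap C\mid S\in\mathbb{S},\ S\cap C\neq\varnothing\}$ has the finite intersection property: in an irreducible space, nonempty relatively open subsets pairwise meet, hence finitely many meet. Extend it to an ultrafilter $\mathscr{U}$ and take $x\in X_{\mathbb{S}}(\mathscr{U})$.

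First, $x\in C$. Otherwise some $B=S_1\cap\dots\cap S_n\in\mathcal{B}$ contains $x$ and misses $C$. From $x\in S_j$ we get $S_j\in\mathscr{U}$, so $S_j\cap C\in\mathscr{U}$ for each $j$. Then $B\cap C\in\mathscr{U}$, which contradicts $B\cap C=\varnothing$.

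Second, every basic open set meeting $C$ contains $x$. If $B=S_1\cap\dots\cap S_n$ meets $C$, then each $S_j$ meets $C$, so $S_j\cap C\in\mathscr{U}$ and $x\in S_j$; hence $x\in B$.

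These two facts give $C=\overline{\{x\}}$, and $T_0$ forces the generic point to be unique. The delicate point in this step is the interplay between irreducibility and closure under finite intersections.

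\emph{Step 4: conclusion.} With $T_0$, Steps 1–3 verify every clause of the definition of a spectral space.
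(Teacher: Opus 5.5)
Your proof is correct. The paper gives no proof of this statement: it quotes it from Finocchiaro and uses it as a black box in Theorem~\ref{construction_of_spectral_space}, so the relevant comparison is with the cited argument.

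Your forward direction is essentially the standard one. With $\mathbb{S}$ the quasi-compact opens, $X_{\mathbb{S}}(\mathscr{U})$ is the set of limits of $\mathscr{U}$ in the patch topology, and you extract such a limit from Hochster's compactness theorem via the finite intersection property.

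The converse is where you genuinely differ. The cited route observes that, for any subbasis $\mathbb{S}$, a point lies in $X_{\mathbb{S}}(\mathscr{U})$ exactly when $\mathscr{U}$ converges to it in the topology generated by $\mathbb{S}$ together with the complements of its members. The hypothesis therefore says precisely that this patch-type topology is quasi-compact, and spectrality is then deduced from Hochster's characterization of $T_0$ spaces admitting a subbasis whose associated patch-type topology is compact. You instead verify the axioms of the definition one at a time, each time extending a family with the finite intersection property to an ultrafilter:
\begin{enumerate}[label=(\roman*)]
\item Alexander's lemma gives quasi-compactness of the basic opens;
\item the lattice and basis properties then follow formally;
\item sobriety comes from an ultrafilter containing $C$ and all nonempty traces $S\cap C$.
\end{enumerate}

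Your version is self-contained: it needs only ultrafilter extension and Alexander's lemma, with no second appeal to Hochster. Its Step~1 also shows that the members of $\mathbb{S}$ and their finite intersections are automatically quasi-compact. In the paper's setting, this means Lemma~\ref{quasi_compactness_of_subbasic_opens} would already follow from the ultrafilter verification carried out in the proof of Theorem~\ref{construction_of_spectral_space}. The cited route is shorter once Hochster's machinery is available. It also explains the condition conceptually, as compactness of the topology generated by $\mathbb{S}$ and its complements.
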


Let us recall that given a set $S$, the power set $\mathcal{P}(S)$ endowed with the \emph{hull-kernel topology} is a spectral space. For an arbitrary subset $E \subseteq S$, define
\begin{equation*}
    D(E) := \{A \in \mathcal{P}(S) \mid E \nsubseteq A\}, \qquad V(E) := \{A \in \mathcal{P}(S) \mid E \subseteq A\}. 
\end{equation*}
Thus, $V(E) = \mathcal{P}(S) \setminus D(E)$. The family $\{D(F)\}_{F\in\mathcal{P}_\mathrm{fin}(S)}$ forms an open subbasis of the hull-kernel topology on $\mathcal{P}(S)$. Moreover, each set of this family is quasi-compact in $\mathcal{P}(S)$. For a singleton $\{x\}$ we write $D(x)$ and $V(x)$ instead of $D(\{x\})$ and $V(\{x\})$. 

The \emph{specialization preorder} on a topological space $X$ is defined by 
\begin{equation*}
    A \leqslant B \iff B \in \overline{\{A\}}. 
\end{equation*}
It is well known that in the hull-kernel topology on $\mathcal{P}(S)$ this preorder coincides with set-theoretic inclusion. Indeed, let $A, B \in \mathcal{P}(S)$ and observe that 
\begin{equation*}\begin{aligned}
    B \in \overline{\{A\}} &\iff \forall F\in\mathcal{P}_\mathrm{fin}(S) \hspace{3pt} (F \subseteq A \implies B \in V(F)) \\ &\iff \forall F\in\mathcal{P}_\mathrm{fin}(S) \hspace{3pt} (F \subseteq A \implies F \subseteq B) \\ &\iff A \subseteq B. 
\end{aligned}\end{equation*}
Consequently, the specialization preorder on every subspace $X \subseteq\mathcal{P}(S)$ considered in the rest of the article is set-theoretic inclusion. 
\section{Spectral spaces arising from algebraic closure operators on semilattice-ordered semigroups}\label{spectral_spaces_arising_from_closures}
We begin by considering the family of all subsets that are closed with respect to a given closure operator $\mathrm{cl}$ on a semilattice-ordered semigroup $S$. Namely, we set $$X := \{A\in\mathcal{P}(S) \mid A^\mathrm{cl} = A\}$$ and endow it with the subspace topology induced by the hull-kernel topology on $\mathcal{P}(S)$. For every subset $E \subseteq S$, we write $$D_X(E) := D(E) \cap X, \qquad V_X(E) := V(E) \cap X.$$

Let us recall that quasi-compactness is not in general a hereditary property: if $X_1$ is quasi-compact and $X_2 \subseteq X_1$, then $X_2$ need not be quasi-compact. For this reason, even though each $D(F)$ is quasi-compact in $\mathcal{P}(S)$, it does not follow that the set $D_X(F)$ is quasi-compact in the induced topology on $X$. Lemma \ref{quasi_compactness_of_subbasic_opens} establishes that this property holds whenever the closure operator is algebraic.  
\begin{lemma}\label{quasi_compactness_of_subbasic_opens}
    Let $S$ be a semilattice-ordered semigroup and let $\mathrm{cl}$ be an algebraic closure operator on $S$. For every finite subset $F \subseteq S$, the set $D_X(F)$ is quasi-compact in $X$. 
\end{lemma}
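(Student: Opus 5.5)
Since $D_X(F)=\bigcup_{x\in F} D_X(x)$ and a finite union of quasi-compact sets is quasi-compact, it suffices to treat a singleton $F=\{x\}$. The case $F=\varnothing$ is trivial, because $D(\varnothing)=\varnothing$. So fix $x\in S$ and an open cover of $D_X(x)$. Since the sets $D(G)=\bigcup_{g\in G}D(g)$ with $G$ finite form a subbasis, the finite intersections $D_X(G_1)\cap\dots\cap D_X(G_n)$ form a basis of $X$. Refining the cover, we may therefore assume it consists of basic sets of this form.

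The plan is a direct argument by contradiction, using a maximal element. Suppose no finite subfamily covers $D_X(x)$. Consider the family $\Sigma$ of all $\mathrm{cl}$-closed sets $A$ with $x\notin A$. This family is nonempty only if some closed set avoids $x$; otherwise $D_X(x)=\varnothing$ and there is nothing to prove.

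Algebraicity gives a direct compactness statement. Let $\{x\}^{\mathrm{cl}}$-style generation be replaced by the following observation: for each $A\in D_X(x)$ there is a basic open $B_A=D_X(G_1)\cap\dots\cap D_X(G_n)$ in the cover containing $A$. Choose elements $g_i\in G_i\setminus A$. Then $A\in\bigcap_i D_X(g_i)\subseteq B_A$, so we may assume every cover member is of the form $D_X(H)=\{C\in X : H\cap C=\varnothing\}$ with $H$ finite. Here I write $D_X(H)$ informally for the set of closed sets missing all points of $H$; this is an intersection of the $D_X(g)$.

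The key step is to show that the family of closed sets missing $x$ behaves compactly in the patch sense. I would use Theorem~\ref{ultrafilter_criterion}-style reasoning directly. Suppose for each finite subcover $\{D_X(H_1),\dots,D_X(H_m)\}$ there is a closed $A$ with $x\notin A$ and $A\cap H_j\neq\varnothing$ for all $j$. Let $\mathscr{U}$ be an ultrafilter on $D_X(x)$ containing all the sets $\{A\in D_X(x) : A\notin D_X(H)\}$ for $H$ in the cover; these have the finite intersection property by assumption. Define
\[
A_0:=\{s\in S \mid V_X(s)\cap D_X(x)\in\mathscr{U}\}.
\]
Then $x\notin A_0$, and $A_0\cap H\neq\varnothing$ for each cover member $D_X(H)$: finitely many sets $V_X(h)$, $h\in H$, have union in $\mathscr{U}$, so one of them lies in $\mathscr{U}$. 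The main obstacle is showing that $A_0$ is $\mathrm{cl}$-closed, and this is exactly where algebraicity enters. Take $s\in A_0^{\mathrm{cl}}$. By algebraicity there is a finite $E\subseteq A_0$ with $s\in E^{\mathrm{cl}}$. The set $\bigcap_{e\in E}V_X(e)\cap D_X(x)$ lies in $\mathscr{U}$, and every closed $C$ in it contains $E$, hence contains $E^{\mathrm{cl}}\ni s$. Thus $V_X(s)\cap D_X(x)\in\mathscr{U}$, so $s\in A_0$. Hence $A_0\in D_X(x)$ but lies in no cover member, which is a contradiction. This yields a finite subcover and proves that $D_X(F)$ is quasi-compact.
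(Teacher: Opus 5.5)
Your proof is correct, but it takes a different route from the paper.

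\textbf{The paper's route.} The paper applies Alexander's subbasis theorem to the subbasis $\{D_X(G)\}_{G\in\mathcal{P}_\mathrm{fin}(S)}$. It therefore only has to handle covers $D_X(F)\subseteq\bigcup_{i}D_X(F_i)$ by subbasic sets. Since the complements satisfy $\bigcap_i V_X(F_i)=V_X(\bigcup_i F_i)$, the point contradicting the cover is canonical, namely $(\bigcup_i F_i)^{\mathrm{cl}}$. Algebraicity, applied to the finite set $F$, shows that this point still lies in $D_X(F)$.

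\textbf{Your route.} You refine to basic sets $\bigcap_{h\in H}D_X(h)$. Their complements $\bigcup_{h\in H}V_X(h)$ are unions, so a witness has to pick one $h$ from each $H$ coherently. Your ultrafilter on $D_X(x)$ makes that choice. Your $A_0$ is the relativization to $D_X(x)$ of the set $A_{\mathscr{U}}$ from the paper's proof of Theorem \ref{construction_of_spectral_space}, and its closedness again comes from algebraicity.

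\textbf{What each buys.} In effect you inline an ultrafilter proof of Alexander's theorem. Your version is self-contained and runs parallel to the paper's spectrality argument. The paper's version is shorter and, once Alexander's theorem is granted, produces the witness explicitly.

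\textbf{Presentation.}
\begin{enumerate}[label=(\arabic*)]
\item Your redefinition of $D_X(H)$ as $\bigcap_{h\in H}D_X(h)$ clashes with the paper's $D_X(H)=\bigcup_{h\in H}D_X(h)$ and should get its own symbol.
\item The leftover remarks about a maximal element, the family $\Sigma$ and ``$\{x\}^{\mathrm{cl}}$-style generation'' play no role and should be deleted.
\item The reduction to singletons is unnecessary. The same ultrafilter argument works for $D_X(F)$ directly: if $F\subseteq A_0$, then $V_X(F)\cap D_X(F)=\varnothing$ would belong to $\mathscr{U}$.
\end{enumerate}
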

\begin{proof}
    Fix a finite subset $F \subseteq S$. Since $X$ carries the subspace topology induced by the hull-kernel topology on $\mathcal{P}(S)$, the family of sets $$\{D_X(G)\}_{G\in\mathcal{P}_\mathrm{fin}(S)}$$ is an open subbasis of $X$. By Alexander's subbasis theorem, it is enough to show that every cover of $D_X(F)$ by subbasic open sets admits a finite subcover. Suppose then that we have such a cover, i.e.
    \begin{equation}\label{cover_by_subbasic_opens}
        D_X(F) \subseteq \bigcup_{i\in I} D_X(F_i),  
    \end{equation}
    where $F_i$ is a finite subset of $S$ for any $i\in I$. 
    Assume that no finite subcover exists. Then for every finite set $J \subseteq I$, $D_X(F) \nsubseteq \bigcup_{j \in J}D_X(F_j)$ and hence 
    \begin{equation*}
        D_X(F) \setminus\bigcup_{j \in J} D_X(F_j) = D_X(F) \cap \bigcap_{j \in J} V_X(F_j) \neq \varnothing. 
    \end{equation*}
    So for each finite set $J \subseteq I$, we can choose an element $C_J \in X$ such that $F \nsubseteq C_J$ and $\bigcup_{j \in J} F_j \subseteq C_J$. Since $C_J$ is $\mathrm{cl}$-closed and contains $\bigcup_{j\in J}F_j$, it must also contain $(\bigcup_{j \in J} F_j)^{\mathrm{cl}}$. Therefore, if $F \subseteq (\bigcup_{j \in J} F_j)^{\mathrm{cl}}$, then $F\subseteq C_J$, contradicting $F \nsubseteq C_J$. So for every finite set $J \subseteq I$ we have 
    \begin{equation}\label{F_is_not_contained_in_cl_of_finite_sum} 
    F \nsubseteq \Big(\bigcup_{j \in J} F_j\Big)^{\mathrm{cl}}.
    \end{equation}
    
    Now consider the set $A := \bigcup_{i \in I} F_i$. If $F \subseteq A^\mathrm{cl}$, then for each $f \in F$ there exists some finite set $B_f \subseteq A$ such that $f \in B_f^\mathrm{cl}$ (by algebraicity of $\mathrm{cl}$). Let $B := \bigcup_{f\in F}B_f \subseteq A$. Then the set $B$ is finite and hence there exists a finite set $K \subseteq I$ such that $B \subseteq \bigcup_{k \in K} F_k$. We get $$F \subseteq B^\mathrm{cl} \subseteq \Big(\bigcup_{k \in K} F_k\Big)^\mathrm{cl},$$ 
    which leads to a contradiction with \eqref{F_is_not_contained_in_cl_of_finite_sum}. Thus, $F \nsubseteq A^\mathrm{cl}$ and $A^\mathrm{cl} \in D_X(F)$. It is easy to see that also $A^\mathrm{cl} \in \bigcap_{i \in I} V_X(F_i)$. Therefore, 
    \begin{equation*}
        A^\mathrm{cl} \in D_X(F) \cap \bigcap_{i \in I} V_X(F_i) = D_X(F) \setminus \bigcup_{i \in I} D_X(F_i). 
    \end{equation*}
    This is a contradiction with \eqref{cover_by_subbasic_opens}. Thus, every cover of $D_X(F)$ by subbasic open sets must have a finite subcover. By Alexander's subbasis theorem, $D_X(F)$ is quasi-compact. 
\end{proof}
The next theorem is motivated by several earlier spectral space constructions arising from algebraic closure operators. Jun, Ray and Tolliver proved that for an algebraic closure operator $\mathrm{cl}$ the set of $\mathrm{cl}$-closed ideals of a semiring forms a spectral space \cite[Proposition~3.9]{JunRayTolliver}. Related results were obtained for the space of $\mathrm{cl}$-closed submodules of a given module over a commutative ring \cite[Proposition 3.4]{FinocchiaroFontanaSpiritoNullstellensatz}, as well as for the space of stable semistar operations of finite type on an integral domain \cite[Theorem 4.6]{FinocchiaroFontanaSpirito}. Banerjee obtained analogous results in the setting of abelian categories, where he constructed spectral spaces of invariants of closure operators acting on subobjects of a given object \cite[Proposition 3.3]{Banerjee}. Our argument follows the same general philosophy in the context of semilattice-ordered semigroups. In particular, the implication from algebraicity of $\mathrm{cl}$ to spectrality of the associated space $X$ is standard and we include it for completeness. 

\begin{theorem}\label{construction_of_spectral_space}
    Let $S$ be a semilattice-ordered semigroup and let $\mathrm{cl}$ be a closure operator on $S$. Then the space $X = \{A \in\mathcal{P}(S) \mid A^\mathrm{cl} = A\}$,
    endowed with the subspace topology induced by the hull-kernel topology on $\mathcal{P}(S)$, is a spectral space and a retrocompact subset of $\mathcal{P}(S)$ if and only if $\mathrm{cl}$ is algebraic.
\end{theorem}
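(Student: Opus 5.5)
We prove the two implications separately. For ``algebraic $\Rightarrow$ spectral and retrocompact'' we apply the ultrafilter criterion (Theorem \ref{ultrafilter_criterion}) with the subbasis $\mathbb{S} := \{D_X(F)\}_{F\in\mathcal{P}_\mathrm{fin}(S)}$. The space $X$ is $T_0$ because its specialization preorder is inclusion, which is antisymmetric.

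For an ultrafilter $\mathscr{U}$ on $X$, put
$$A_\mathscr{U} := \{x \in S \mid V_X(x) \in \mathscr{U}\}.$$
First we check that $A_\mathscr{U}$ is $\mathrm{cl}$-closed. Let $x \in A_\mathscr{U}^\mathrm{cl}$. By algebraicity there is a finite $G \subseteq A_\mathscr{U}$ with $x \in G^\mathrm{cl}$. Then $\bigcap_{g\in G} V_X(g) = V_X(G) \in \mathscr{U}$. Every member of $V_X(G)$ is closed, so it contains $G^\mathrm{cl}$ and hence $x$. Thus $V_X(G) \subseteq V_X(x)$, so $V_X(x)\in\mathscr{U}$ and $x \in A_\mathscr{U}$.

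Next we check $A_\mathscr{U} \in X_\mathbb{S}(\mathscr{U})$. For finite $F$ we have $F \subseteq A_\mathscr{U}$ iff $V_X(f)\in\mathscr{U}$ for all $f\in F$. This holds iff $V_X(F)\in\mathscr{U}$, using closure under finite intersections and supersets. Since $\mathscr{U}$ is an ultrafilter, that is equivalent to $D_X(F)\notin\mathscr{U}$, as required. Hence $X$ is spectral.

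For retrocompactness, recall that quasi-compact opens of $\mathcal{P}(S)$ are finite unions of finite intersections of the sets $D(F)$. So it suffices to show that $D_X(F_1)\cap\dots\cap D_X(F_n)$ is quasi-compact in $X$. Lemma \ref{quasi_compactness_of_subbasic_opens} covers only a single $D_X(F)$. However, since $X$ is now known to be spectral and each $D_X(F)$ is a quasi-compact open of $X$, finite intersections of them are quasi-compact by the definition of spectral space. Then $U\cap X$ is a finite union of quasi-compact sets, hence quasi-compact.

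For the converse, assume $X$ is spectral and retrocompact in $\mathcal{P}(S)$. Let $A \subseteq S$ and $x \in A^\mathrm{cl}$; we must find a finite $G\subseteq A$ with $x\in G^\mathrm{cl}$. Suppose not. Then for every finite $G\subseteq A$ the closed set $G^\mathrm{cl}$ lies in $D_X(x)\cap V_X(G)$. So the family $\{V_X(g)\}_{g\in A}$ restricted to $D_X(x)$ has the finite intersection property. By retrocompactness, $D_X(x) = D(x)\cap X$ is quasi-compact. The sets $V_X(g) = X\setminus D_X(g)$ are closed in $X$, so their traces on $D_X(x)$ are closed in the quasi-compact space $D_X(x)$. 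Hence $\bigcap_{g\in A} V_X(g)\cap D_X(x)\neq\varnothing$, which gives a closed $C\supseteq A$ with $x\notin C$. Then $A^\mathrm{cl}\subseteq C$ contradicts $x\in A^\mathrm{cl}$. This direction in fact uses only the retrocompactness of the single sets $D(x)$.

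The main obstacle is bookkeeping rather than depth: verifying that $A_\mathscr{U}$ is closed is the only place algebraicity enters the spectrality proof. One must also note correctly that retrocompactness needs finite intersections of subbasic sets, which follows from spectrality of $X$ together with Lemma \ref{quasi_compactness_of_subbasic_opens}.
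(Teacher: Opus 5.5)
Your proposal is correct and follows essentially the same route as the paper: the same ultrafilter construction $A_\mathscr{U}$, the same use of Lemma \ref{quasi_compactness_of_subbasic_opens} together with spectrality of $X$ for retrocompactness, and, for the converse, a finite-intersection-property argument in the quasi-compact set $D_X(x)$ that is just the closed-set dual of the paper's open-cover argument. The remaining differences are cosmetic: you justify $T_0$ via antisymmetry of the specialization order, and you check $A_\mathscr{U}\in X_\mathbb{S}(\mathscr{U})$ through $V_X(F)=\bigcap_{f\in F}V_X(f)$ rather than through $D_X(F)=\bigcup_{f\in F}D_X(f)$.
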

\begin{proof}
    Assume first that $X$ is a spectral space and a retrocompact subset of $\mathcal{P}(S)$. To show that $\mathrm{cl}$ is algebraic, fix $A \subseteq S$. The inclusion $\bigcup\{F^\mathrm{cl} \mid F \in \mathcal{P}_\mathrm{fin}(A)\} \subseteq A^\mathrm{cl}$ follows from order-preservation, so it suffices to prove the reverse inclusion. Let $x\in A^\mathrm{cl}$. Then $V_X(A) \subseteq V_X(x)$ and thus $D_X(x) \subseteq D_X(A)$. It is easy to observe that $D_X(A) = \bigcup_{a \in A}D_X(a)$, so $\{D_X(a)\}_{a \in A}$ is an open cover of $D_X(x)$. Now $D(x)$ is a quasi-compact open subset of $\mathcal{P}(S)$ and $D_X(x) = D(x) \cap X$ is also quasi-compact, since $X$ is retrocompact in $\mathcal{P}(S)$. Hence there exists a finite subset $F \subseteq A$ such that 
    \begin{equation*}
        D_X(x) \subseteq \bigcup_{a \in F} D_X(a) = D_X(F), 
    \end{equation*}
    which yields $V_X(F) \subseteq V_X(x)$. Since $F^\mathrm{cl} \in X$ and $F \subseteq F^\mathrm{cl}$, we have $F^\mathrm{cl} \in V_X(F) \subseteq V_X(x)$, so $x \in F^\mathrm{cl}$. Therefore, every $x \in A^\mathrm{cl}$ belongs to $F^\mathrm{cl}$ for some finite subset $F \subseteq A$ and hence $A^\mathrm{cl} \subseteq \bigcup\{F^\mathrm{cl} \mid F \in \mathcal{P}_\mathrm{fin}(A)\}$. Thus, $\mathrm{cl}$ is algebraic. 
    
    Conversely, assume that $\mathrm{cl}$ is algebraic. Since $\mathcal{P}(S)$ with the hull-kernel topology is $T_0$ and $X$ carries the subspace topology, the space $X$ is also $T_0$. Now we show that $X$ is spectral by applying the ultrafilter criterion (Theorem \ref{ultrafilter_criterion}). Consider the subbasis $$\mathbb{S} := \{D_X(F) \mid F \in\mathcal{P}_\mathrm{fin}(S)\}.$$ Let $\mathscr{U}$ be an ultrafilter on $X$ and define $$A_\mathscr{U} := \{x \in S \mid V_X(x) \in \mathscr{U}\}.$$ We claim that $A_\mathscr{U} \in X_\mathbb{S}(\mathscr{U})$. We first show that $A_\mathscr{U}$ is $\mathrm{cl}$-closed. Let $x \in A_\mathscr{U}^\mathrm{cl}$. Since $\mathrm{cl}$ is algebraic, there exists a finite subset $F \subseteq A_\mathscr{U}$ such that $x \in F^\mathrm{cl}$. By definition of $A_\mathscr{U}$, $V_X(f) \in \mathscr{U}$ for every $f \in F$ and hence $$V_X(F) = \bigcap_{f \in F} V_X(f) \in \mathscr{U}.$$ Since every $\mathrm{cl}$-closed set containing $F$ also contains $x$, we have $V_X(F) \subseteq V_X(x)$. Hence $V_X(x) \in \mathscr{U}$, so $x \in A_\mathscr{U}$ and in consequence $A_\mathscr{U}^\mathrm{cl} \subseteq A_\mathscr{U}$. The reverse inclusion follows from the extension property of $\mathrm{cl}$. Thus, $A_\mathscr{U} \in X$. 
    
    Now let $F$ be a finite subset of $S$. Since $D_X(F) = \bigcup_{f \in F} D_X(f)$ and $\mathscr{U}$ is an ultrafilter, we have
    \begin{align*} 
        A_\mathscr{U} \in D_X(F) &\iff \exists f \in F \text{ such that } f \not\in A_\mathscr{U} \\ &\iff \exists f \in F \text{ such that } V_X(f) \not\in\mathscr{U} \\ &\iff \exists f \in F \text{ such that } D_X(f) \in \mathscr{U} \\ &\iff  
        D_X(F) \in \mathscr{U}. 
    \end{align*} 
    Hence $A_\mathscr{U}\in X_\mathbb{S}(\mathscr{U})$, and therefore $X$ is spectral.
    
    It remains to prove that $X$ is a retrocompact subset of $\mathcal{P}(S)$. Now let $U$ be a quasi-compact open subset of $\mathcal{P}(S)$. Then $$U = \bigcup_{k = 1}^n \bigcap_{l = 1}^{m_k} D(F_{kl})$$ for some finite sets $F_{kl} \subseteq S$. We get 
    \begin{equation*}
        U \cap X = \bigcup_{k = 1}^n \bigcap_{l = 1}^{m_k} \big(D(F_{kl}) \cap X\big) = \bigcup_{k = 1}^n\bigcap_{l = 1}^{m_k} D_X(F_{kl}). 
    \end{equation*}
    By Lemma \ref{quasi_compactness_of_subbasic_opens}, each $D_X(F_{kl})$ is quasi-compact in $X$. Since $X$ is spectral, finite intersections of quasi-compact open subsets of $X$ are again quasi-compact and open. Hence for every $k \in \{1, \ldots, n\}$ the set $\bigcap_{l = 1}^{m_k} D_X(F_{kl})$ is quasi-compact, and therefore $U \cap X$ is quasi-compact as a finite union of quasi-compact subsets. Thus, $X$ is a retrocompact subset of $\mathcal{P}(S)$. 
\end{proof}
Our next example shows that the algebraicity assumption in Theorem \ref{construction_of_spectral_space} is essential: if the closure operator is not algebraic, then the corresponding space $X$ may not be spectral. 
\begin{example}
    Let $S$ be the real unit interval, i.e. $S = [0,1]$, with the semigroup operations $x\cdot y = x \lor y := \min(x,y)$. A map $\mathrm{cl} : \mathcal{P}(S) \rightarrow \mathcal{P}(S)$ defined by
    \begin{equation*}
    A^{\mathrm{cl}} := 
    \begin{cases}
        \varnothing, & \text{for } A = \varnothing, \\
        [0, \sup(A)], & \text{for } A \neq \varnothing
    \end{cases}
    \end{equation*}
    is a closure operator on $S$. We claim that it is not algebraic. Indeed, if we take \begin{equation*}
        A := \bigg\{1 - \frac{1}{n} \hspace{3pt} \bigg| \hspace{3pt} n \in \mathbb{N}_{+}\bigg\},
    \end{equation*}
    then $\sup(A) = 1$, so $A^{\mathrm{cl}} = [0,1]$. On the other hand, 
    \begin{equation*}
        \bigcup\{F^{\mathrm{cl}} \mid F \in \mathcal{P}_\mathrm{fin}(A)\} = \bigcup_{n \in\mathbb{N}_+} \bigg[0, 1 - \frac{1}{n}\bigg] = [0, 1), 
    \end{equation*}
    hence $\mathrm{cl}$ fails algebraicity. 
    
    Let $X$ be defined as in Theorem \ref{construction_of_spectral_space} and let $F \neq \varnothing$ be a finite subset of $S$. Then \begin{equation*}
        X = \{\varnothing\} \cup \{[0,x] \mid x \in [0,1]\}
    \end{equation*}
    and 
    \begin{equation*}
        D_X(F) = \{\varnothing\} \cup \{[0, x] \mid 0 \leqslant x < \sup(F)\}. 
    \end{equation*}
    Hence an open subbasis of $X$ is given by the sets of the form $$D_t := \{\varnothing\} \cup \{[0,x] \mid 0 \leqslant x < t\}$$ for $t \in [0, 1].$ We claim that every nonempty proper open subset of $X$ is of the form $D_t$ for some $t\in[0,1]$. Indeed, let $U \subsetneq X$ be an open set. Since the family $\{D_t \mid t \in[0,1]\}$ is a subbasis, $U$ is a union of finite intersections of sets of the form $D_t$. But for all $s, t \in [0,1]$ we have $D_s \cap D_t = D_{\min(s, t)}$, and therefore every finite intersection of such sets is again of the form $D_t$. Hence for some family $\{t_i\}_{i \in I} \subseteq [0,1]$ we have $$U = \bigcup_{i\in I} D_{t_i} = D_{\sup_{i \in I}t_i}.$$
    
    Let us recall that a spectral space must have a basis of quasi-compact open sets. But for every $t > 0$ the set $D_t$ is not quasi-compact. Indeed, let $t_n$ be an increasing sequence such that $t_n \rightarrow t$ and $t_n < t$ for every natural number $n$. Then $D_t = \bigcup_{n \in \mathbb{N}} D_{t_n}$ and no finite subfamily covers $D_t$ because if $n_1, \ldots, n_k$ are fixed, then 
    \begin{equation*}
        D_{t_{n_1}} \cup \ldots \cup D_{t_{n_k}} = D_{\max(t_{n_1}, \ldots, t_{n_k})} \subsetneq D_t. 
    \end{equation*}
    Thus, $X$ is not spectral. 
\end{example}

\section{Spectrality of induced operations}\label{topological_semigroups_whose_underlying_space_is_spectral}
Having constructed a spectral space from an algebraic closure operator, the next natural question is whether the semigroup structure of $S$ can be transported to that space in a topologically meaningful way.  
Throughout this section, we use the same notation as before and additionally we define an operation $\star : X \times X \rightarrow X$ by 
\begin{equation*}
    A \star B = (AB)^\mathrm{cl}. 
\end{equation*}
\subsection{General case}
\begin{definition}
    Let $S$ be a semilattice-ordered semigroup. A closure operator $\mathrm{cl}$ on $S$ is said to be \emph{multiplicative}\footnote{The terminology "multiplicative closure operator" is consistent with earlier usage in semilattice theory (see e.g. \cite{Varlet_1973, Cornish_1974}). We also note that $\mathcal{P}(S)$, equipped with arbitrary unions and setwise multiplication, is a possibly nonunital quantale, and multiplicative closure operators on $\mathcal{P}(S)$ correspond to quantic nuclei. We nevertheless retain the present terminology, because it is more natural in the semigroup-theoretic context. Moreover, many of the results below are proved for arbitrary algebraic closure operators and do not require multiplicativity.} if $(AB)^\mathrm{cl} = (A^\mathrm{cl}B^\mathrm{cl})^\mathrm{cl}$ for all $A, B \subseteq S$. 
\end{definition}
\noindent Note that the inclusion $(AB)^\mathrm{cl} \subseteq (A^\mathrm{cl}B^\mathrm{cl})^\mathrm{cl}$ always holds.  
\begin{prop}\label{the_inverse_image_of_subbasic_closed_set_open_in_patch_topology}
    Let $S$ be a semilattice-ordered semigroup and let $\mathrm{cl}$ be a closure operator on $S$. 
    \begin{enumerate}[label=(\arabic*)]
    \item If $\mathrm{cl}$ is algebraic, then for every finite subset $F \subseteq S$, \begin{equation}\label{the_form_of_the_inverse_image_of_closed_subbasic_set}
        \star^{-1}\big(V_X(F)\big) = \bigcup_{\substack{G, H\in\mathcal{P}_\mathrm{fin}(S) \\ F \subseteq (GH)^\mathrm{cl}}}\big(V_X(G) \times V_X(H)\big).
    \end{equation}
    In particular, 
    the set $\star^{-1}(V_X(F))$ is open with respect to the patch topology on $X \times X$. 
    \item If $\mathrm{cl}$ is multiplicative, then $(X, \star)$ is a semigroup. 
    \end{enumerate}
\end{prop}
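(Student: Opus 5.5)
For part (1), I will prove the displayed equality by double inclusion. Fix a finite $F \subseteq S$ and a pair $(A,B) \in X \times X$. Suppose first that $(A,B)$ lies in the right-hand side. Then there are finite $G \subseteq A$ and $H \subseteq B$ with $F \subseteq (GH)^\mathrm{cl}$. Since $GH \subseteq AB$, order-preservation gives $F \subseteq (GH)^\mathrm{cl} \subseteq (AB)^\mathrm{cl} = A \star B$. Hence $(A,B) \in \star^{-1}(V_X(F))$.

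Conversely, suppose $F \subseteq (AB)^\mathrm{cl}$. By algebraicity, each $f \in F$ lies in $E_f^\mathrm{cl}$ for some finite $E_f \subseteq AB$. Put $E := \bigcup_{f\in F} E_f$, a finite subset of $AB$. Every $e \in E$ can be written as $e = g_e h_e$ with $g_e \in A$ and $h_e \in B$; choose one such representation for each $e$. Let $G := \{g_e \mid e\in E\}$ and $H := \{h_e \mid e \in E\}$. These are finite, $G \subseteq A$, $H \subseteq B$, and $E \subseteq GH$. Therefore $F \subseteq E^\mathrm{cl} \subseteq (GH)^\mathrm{cl}$, and $(A,B) \in V_X(G)\times V_X(H)$. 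No real obstacle arises here; the only point needing care is finiteness of $G$ and $H$, which holds because $E$ is finite.

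For the patch-openness claim, I will use that $\mathrm{cl}$ is algebraic, so $X$ is spectral by Theorem \ref{construction_of_spectral_space}. Each $D_X(G)$ with $G$ finite is quasi-compact open by Lemma \ref{quasi_compactness_of_subbasic_opens}, hence clopen in $X^\mathrm{cons}$. Thus $V_X(G) = X \setminus D_X(G)$ is patch-open. Consequently each $V_X(G)\times V_X(H)$ is open in $X^\mathrm{cons}\times X^\mathrm{cons} = (X\times X)^\mathrm{cons}$, using the product identity recalled in Section \ref{background}. The union in the displayed formula is then patch-open.

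For part (2), note first that $\star$ lands in $X$ by idempotence. For associativity, let $A,B,C \in X$. Multiplicativity together with $C = C^\mathrm{cl}$ gives
$$(A\star B)\star C = \big((AB)^\mathrm{cl} C\big)^\mathrm{cl} = \big((AB)^\mathrm{cl} C^\mathrm{cl}\big)^\mathrm{cl} = \big((AB)C\big)^\mathrm{cl}.$$
Symmetrically, $A\star(B\star C) = (A(BC))^\mathrm{cl}$. Associativity of setwise multiplication in $S$ then yields $(AB)C = A(BC)$, so the two sides agree.
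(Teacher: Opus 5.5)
Your proposal is correct and follows essentially the same route as the paper: double inclusion via order-preservation and algebraicity, splitting a finite set of products into finite $G \subseteq A$, $H \subseteq B$. Patch-openness then comes from $V_X(G)$ being clopen in $X^{\mathrm{cons}}$ together with $(X\times X)^{\mathrm{cons}} = X^{\mathrm{cons}}\times X^{\mathrm{cons}}$, and associativity from multiplicativity plus $C = C^{\mathrm{cl}}$. Your explicit construction of $E = \bigcup_{f\in F} E_f$ merely spells out the step the paper compresses into one sentence.
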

\begin{proof}
    $(1)$ Fix a finite subset $F \subseteq S$. Assume first that $(A, B)\in X \times X$ belongs to the right-hand side of \eqref{the_form_of_the_inverse_image_of_closed_subbasic_set}. Then there exist finite subsets $G, H\subseteq S$ such that $G \subseteq A$ and $H \subseteq B$, and $F \subseteq (GH)^\mathrm{cl}$. Since $GH \subseteq AB$, order-preservation of $\mathrm{cl}$ gives $$F \subseteq (GH)^\mathrm{cl} \subseteq (AB)^\mathrm{cl} = A\star B.$$ Thus, $(A, B)\in \star^{-1}(V_X(F))$. 

    Conversely, let $(A, B)\in \star^{-1}(V_X(F))$ and in consequence $F \subseteq (AB)^\mathrm{cl}$. Since $\mathrm{cl}$ is algebraic and $F$ is finite, there exists a finite subset $K\subseteq AB$ such that $F\subseteq K^\mathrm{cl}$. Thus, $K = \{a_1b_1, \ldots, a_nb_n\}$ for some elements $a_1, \ldots, a_n\in A$ and $b_1,\ldots, b_n\in B$. Now define $$G := \{a_1, \ldots, a_n\}, \qquad H := \{b_1, \ldots, b_n\}.$$ Then $K \subseteq GH$ and $(A , B)\in V_X(G) \times V_X(H)$. Therefore, $$F \subseteq K^\mathrm{cl} \subseteq (GH)^\mathrm{cl},$$ proving \eqref{the_form_of_the_inverse_image_of_closed_subbasic_set}. 
    
    For every finite subset $G\subseteq S$, since $D_X(G)$ is quasi-compact and open, its complement $V_X(G)$ is clopen in $X^\mathrm{cons}$. Hence $V_X(G)\times V_X(H)$ is open in $(X\times X)^\mathrm{cons}$ for all finite subsets $G, H\subseteq S$. Then by \eqref{the_form_of_the_inverse_image_of_closed_subbasic_set}, the set $\star^{-1}(V_X(F))$ is open in $(X\times X)^\mathrm{cons}$. 
    \\
    $(2)$ If $\mathrm{cl}$ is multiplicative, then for $A, B, C\in X$,
    \begin{equation*}
        (A \star B) \star C = \big((AB)^\mathrm{cl}C\big)^\mathrm{cl} = \big((AB)^\mathrm{cl}C^\mathrm{cl}\big)^\mathrm{cl} = (ABC)^\mathrm{cl}, 
    \end{equation*}
    and similarly $A \star (B \star C) = (ABC)^\mathrm{cl}$. Hence the operation $\star$ is associative. 
\end{proof}
The aim of the present section is to investigate when $\star$ is spectral with respect to the original spectral topology on $X$. The next proposition characterizes this property in terms of finite subsets of $S$. 
\begin{prop}\label{criterion_for_joint_spectrality}
    Let $S$ be a semilattice-ordered semigroup and let $\mathrm{cl}$ be an algebraic closure operator on $S$. The map $\star : X \times X \rightarrow X$ is spectral if and only if for every finite subset $F \subseteq S$ there exists a finite (possibly empty) family of pairs $(G_i, H_i) \in \mathcal{P}_\mathrm{fin}(S) \times \mathcal{P}_\mathrm{fin}(S)$, $i \in I$,
    such that for all $A, B\in X$, 
    \begin{equation*}
        F \subseteq A \star B \iff \exists i\in I \text{ such that } G_i \subseteq A \text{ and } H_i\subseteq B. 
    \end{equation*}
\end{prop}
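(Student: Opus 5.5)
The plan is to prove both directions through the description of $\star^{-1}(V_X(F))$ from Proposition \ref{the_inverse_image_of_subbasic_closed_set_open_in_patch_topology}(1). We also use the fact that the quasi-compact open subsets of a spectral space $Y$ are exactly the finite unions of finite intersections of members of a subbasis of quasi-compact opens closed under finite intersections. For $X$ the sets $D_X(F)$, $F$ finite, form such a subbasis: $D_X(F)$ is quasi-compact by Lemma \ref{quasi_compactness_of_subbasic_opens}. For $X\times X$ the sets $D_X(G)\times X$ and $X\times D_X(H)$ play this role. Spectrality of $\star$ therefore reduces to checking that $\star^{-1}(D_X(F))$ is quasi-compact open for every finite $F$. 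Indeed, preimages commute with unions and intersections, and finite intersections of quasi-compact opens in the spectral space $X\times X$ are again quasi-compact open.

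For ($\Leftarrow$), fix $F$ and take the given family $(G_i,H_i)_{i\in I}$. Then
\begin{equation*}
\star^{-1}(V_X(F))=\bigcup_{i\in I}\big(V_X(G_i)\times V_X(H_i)\big),
\end{equation*}
so taking complements gives
\begin{equation*}
\star^{-1}(D_X(F))=\bigcap_{i\in I}\Big(\big(D_X(G_i)\times X\big)\cup\big(X\times D_X(H_i)\big)\Big).
\end{equation*}
Each $D_X(G)\times X$ is quasi-compact open, being a product of quasi-compact opens. Hence each term of the intersection is a quasi-compact open subset of the spectral space $X\times X$, and so is the finite intersection. If $I=\varnothing$, the intersection is $X\times X$, which is quasi-compact open.

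For ($\Rightarrow$), assume $\star$ is spectral and fix $F$. Then $\star^{-1}(D_X(F))$ is quasi-compact open, so its complement $C=\star^{-1}(V_X(F))$ is closed in $X\times X$ and clopen in the patch topology. Since the patch topology is quasi-compact (Theorem \ref{spectral_space_with_patch_top_is_compact} together with $(X\times X)^{\mathrm{cons}}=X^{\mathrm{cons}}\times X^{\mathrm{cons}}$), $C$ is patch-compact. By formula \eqref{the_form_of_the_inverse_image_of_closed_subbasic_set}, $C$ is the union of the patch-open sets $V_X(G)\times V_X(H)$ over pairs with $F\subseteq (GH)^{\mathrm{cl}}$. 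Patch-compactness then gives finitely many such pairs $(G_i,H_i)$ whose rectangles already cover $C$. Each rectangle is contained in $C$ by the easy inclusion in that formula, so the union of the finitely many rectangles is exactly $C$. This is precisely the required equivalence $F\subseteq A\star B\iff \exists i\ (G_i\subseteq A,\ H_i\subseteq B)$.

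The main subtlety is the reduction in the first paragraph. One needs to know that it suffices to test preimages of the subbasic sets $D_X(F)$, and that the sets $(D_X(G)\times X)\cup(X\times D_X(H))$ are quasi-compact open in $X\times X$. Both follow from the standard structure of spectral spaces and of products, which the paper already uses, so this step should be routine but should be stated explicitly. In the forward direction, the essential point is that patch-compactness of $C$ turns the possibly infinite union in \eqref{the_form_of_the_inverse_image_of_closed_subbasic_set} into a finite one.
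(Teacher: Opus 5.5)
Your proof is correct and follows the paper's argument essentially step for step. For ($\Leftarrow$) you complement the finite union of rectangles into the finite intersection of the quasi-compact opens $(D_X(G_i)\times X)\cup(X\times D_X(H_i))$, and for ($\Rightarrow$) you use patch-compactness of $\star^{-1}(V_X(F))$ together with the cover from Proposition~\ref{the_inverse_image_of_subbasic_closed_set_open_in_patch_topology}(1) to extract finitely many pairs.

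Your explicit justification of the reduction to the sets $D_X(F)$, which the paper leaves implicit, is sound. One small correction: this family is closed under finite unions ($D_X(F)\cup D_X(G)=D_X(F\cup G)$), not intersections. What the reduction actually uses is that finite intersections of quasi-compact opens in a spectral space remain quasi-compact open.
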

\begin{proof}
    First we prove the implication from right to left. It suffices to show that $\star^{-1}(D_X(F))$ is a quasi-compact open set in $X \times X$ for every finite subset $F \subseteq S$. 
    
    Fix a finite subset $F \subseteq S$. There exists a finite (possibly empty) family of pairs $(G_i, H_i)\in\mathcal{P}_\mathrm{fin}(S) \times \mathcal{P}_\mathrm{fin}(S)$, $i \in I$, such that $$\star^{-1}\big(V_X(F)\big) = \{(A, B) \in X \times X \mid F \subseteq A \star B\} = \bigcup_{i \in I} \big(V_X(G_i) \times V_X(H_i)\big),$$ and then $$\star^{-1}\big(D_X(F)\big) = (X \times X)\setminus\star^{-1}\big(V_X(F)\big) = \bigcap_{i \in I} \Big(\big(X \times X\big) \setminus \big(V_X(G_i) \times V_X(H_i)\big)\Big).$$ If $I=\varnothing$, then $\star^{-1}(V_X(F)) = \varnothing$, hence $\star^{-1}(D_X(F)) = X\times X$, which is quasi-compact and open. Assume therefore that $I \neq \varnothing$. For every $i\in I$ we have $$\big(X \times X)\setminus\big(V_X(G_i) \times V_X(H_i)\big) = \big(D_X(G_i) \times X\big) \cup \big(X \times D_X(H_i)\big),$$
    which is quasi-compact and open, since the sets $D_X(G_i), D_X(H_i)$ are quasi-compact and open in $X$. In a spectral space any finite intersection of quasi-compact open sets is again a quasi-compact open set. Thus, the set $\star^{-1}(D_X(F))$ is quasi-compact and open in $X \times X$, as required.  
    
    Conversely, assume that the map $\star: X \times X \to X$ is spectral. Fix a finite subset $F \subseteq S$. Then $\star^{-1}(D_X(F))$ is quasi-compact and open in $X \times X$. It is therefore clopen in $(X \times X)^\mathrm{cons}$ and its complement $\star^{-1}(V_X(F))$ is compact in the patch topology. By Proposition \ref{the_inverse_image_of_subbasic_closed_set_open_in_patch_topology}(1), we have $$\star^{-1}\big(V_X(F)\big) = \bigcup_{\substack{G, H\in\mathcal{P}_\mathrm{fin}(S) \\ F \subseteq (GH)^\mathrm{cl}}}\big(V_X(G) \times V_X(H)\big).$$ Using compactness, we get finitely many pairs $(G_i, H_i)\in\mathcal{P}_\mathrm{fin}(S)\times\mathcal{P}_\mathrm{fin}(S)$, $i\in I$, such that $$\star^{-1}\big(V_X(F)\big) = \bigcup_{i\in I} \big(V_X(G_i) \times V_X(H_i)\big).$$ This gives the desired equivalence.  
\end{proof}

Our next example illustrates how Proposition \ref{criterion_for_joint_spectrality} can be applied in practice; more specifically, how one can verify that the condition appearing in that proposition does not hold.  
\begin{example}
Let $S := \mathbb{Z}$ with $$x \cdot y := x + y, \qquad x \lor y := \max(x, y).$$ Then $(S, \cdot, \lor)$ is a semilattice-ordered semigroup. Take the closure operator $\mathrm{cl} = \mathrm{id}_{\mathcal{P}(S)}$. It is algebraic, so we have  
\begin{equation*}\begin{aligned}
    X = \{A \in \mathcal{P}(S) \mid A^\mathrm{cl} = A\} = \mathcal{P}(\mathbb{Z})     
\end{aligned}\end{equation*}
and 
\begin{equation*}
    \star : X \times X \rightarrow X, \qquad A \star B = A + B = \{a + b \mid a \in A, \hspace{3pt} b \in B\}.
\end{equation*}
Using the criterion from Proposition \ref{criterion_for_joint_spectrality}, we show that the map $\star$ is not spectral. 

Assume towards a contradiction that $\star$ is spectral. Let $F := \{0\}$. Then there exist finitely many pairs of subsets $(G_1, H_1), \ldots, (G_n, H_n) \in \mathcal{P}_\mathrm{fin}(\mathbb{Z}) \times \mathcal{P}_\mathrm{fin}(\mathbb{Z})$ such that for all $A, B \subseteq \mathbb{Z}$, $$0 \in A + B \iff \exists i\in\{1,\ldots,n\} \text{ such that } G_i \subseteq A \text{ and } H_i \subseteq B.$$
Set $K := \bigcup_{i = 1}^n (G_i \cup (-H_i))$, where $-H_i := \{-h \mid h \in H_i\}$ for all $i \in \{1, \ldots, n\}$. The set $K$ is finite, so we can choose $m \in \mathbb{Z}\setminus K$. Put $A := \{m\}$ and $B := \{-m\}$. Then $0 \in A + B$ and hence $G_i \subseteq \{m\}$ and $H_i \subseteq \{-m\}$ for some $i\in\{1, \ldots, n\}$. At least one of the sets $G_i, H_i$ is nonempty (otherwise $0 \in A + B$ for all $A, B\subseteq\mathbb{Z}$). Thus, $m\in G_i$ or $-m\in H_i$, which implies $m \in K$, a contradiction. Therefore, $\star$ is not spectral. 
\end{example}

Besides the map $\star : X \times X \rightarrow X$, it is natural to consider the right and left translations obtained by fixing one argument. The next proposition shows that their spectrality can also be characterized by a condition analogous to that of Proposition \ref{criterion_for_joint_spectrality}. Since the proof follows a similar pattern, we omit it. 

\begin{prop}\label{criterion_for_separate_spectrality}
    Let $S$ be a semilattice-ordered semigroup and let $\mathrm{cl}$ be an algebraic closure operator on $S$. Let $B\in X$. The right translation $R_B : X \rightarrow X$ given by $R_B(A) = A \star B$ is spectral if and only if for every finite subset $F \subseteq S$ there exists a finite (possibly empty) family $G_i\in\mathcal{P}_\mathrm{fin}(S)$, $i\in I$,
    such that for all $A \in X$, 
    \begin{equation*}
        F \subseteq A \star B \iff \exists i\in I \text{ such that } G_i\subseteq A. 
    \end{equation*}
\end{prop}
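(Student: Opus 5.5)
The proof mirrors that of Proposition \ref{criterion_for_joint_spectrality}, with the product $X \times X$ replaced by $X$ and the second coordinate frozen at $B$. The first step is a one-variable analogue of Proposition \ref{the_inverse_image_of_subbasic_closed_set_open_in_patch_topology}(1). Using algebraicity of $\mathrm{cl}$, we will show that for every finite $F \subseteq S$
\begin{equation*}
    R_B^{-1}\big(V_X(F)\big) = \bigcup_{\substack{G\in\mathcal{P}_\mathrm{fin}(S) \\ F \subseteq (GB)^\mathrm{cl}}} V_X(G).
\end{equation*}
The inclusion from right to left is order-preservation: if $G \subseteq A$, then $F \subseteq (GB)^\mathrm{cl} \subseteq (AB)^\mathrm{cl}$. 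For the converse, suppose $F \subseteq (AB)^\mathrm{cl}$. Algebraicity gives a finite set $K = \{a_1b_1,\dots,a_nb_n\} \subseteq AB$ with $F \subseteq K^\mathrm{cl}$. Put $G = \{a_1,\dots,a_n\}$. Then $K \subseteq GB$, so $F \subseteq (GB)^\mathrm{cl}$ and $A \in V_X(G)$. Each $V_X(G)$ is clopen in $X^\mathrm{cons}$, because $D_X(G)$ is quasi-compact open by Lemma \ref{quasi_compactness_of_subbasic_opens}. Hence $R_B^{-1}(V_X(F))$ is open in the patch topology.

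\emph{Sufficiency.} Assume the finite family $\{G_i\}_{i\in I}$ exists for each finite $F$. Then $R_B^{-1}(V_X(F)) = \bigcup_{i\in I} V_X(G_i)$, and therefore
\begin{equation*}
R_B^{-1}\big(D_X(F)\big) = \bigcap_{i\in I} D_X(G_i).
\end{equation*}
If $I = \varnothing$, this set is all of $X$. Otherwise it is a finite intersection of quasi-compact open sets in the spectral space $X$ (Theorem \ref{construction_of_spectral_space}), hence quasi-compact open. Every quasi-compact open subset of $X$ is a finite union of finite intersections of sets $D_X(F)$. Preimages commute with unions and intersections, and finite unions of quasi-compact opens are quasi-compact. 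So $R_B^{-1}$ of every quasi-compact open set is quasi-compact open, and $R_B$ is spectral.

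\emph{Necessity.} Assume $R_B$ is spectral and fix a finite $F$. Then $R_B^{-1}(D_X(F))$ is quasi-compact open, hence clopen in $X^\mathrm{cons}$. Its complement $R_B^{-1}(V_X(F))$ is therefore patch-closed, and so compact by Theorem \ref{spectral_space_with_patch_top_is_compact}. The displayed union is a cover of this set by patch-open sets $V_X(G)$ contained in it, so finitely many $G_i$ suffice. This yields exactly the required equivalence $F \subseteq A\star B \iff \exists i,\ G_i\subseteq A$.

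The only step needing care is the first one, the description of the preimage. There one must use that $B$ itself need not be finitely generated, so only the $A$-side is replaced by a finite set while $B$ stays fixed. The remaining steps are the routine compactness arguments already carried out in Proposition \ref{criterion_for_joint_spectrality}.
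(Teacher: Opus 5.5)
Your proof is correct and takes essentially the paper's approach: the paper omits this proof, saying it follows the pattern of Proposition~\ref{criterion_for_joint_spectrality}. You carry out that pattern faithfully, including the one-variable analogue of Proposition~\ref{the_inverse_image_of_subbasic_closed_set_open_in_patch_topology}(1) with only the $A$-side made finite, then finite intersections of quasi-compact opens for sufficiency and patch-compactness of $R_B^{-1}(V_X(F))$ for necessity.
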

\noindent The analogous equivalence holds for each left translation $L_A : X \rightarrow X$, $L_A(B) = A \star B$. 

The next corollary shows that spectrality of $\star$ implies spectrality of all right and left translations, as one would expect. The converse does not hold in general. A~counterexample will be given in Example \ref{separate_spectrality_does_not_imply_joint}.
\begin{corollary}\label{spectrality_of_star_implies_spectrality_of_all_translations}
    Let $S$ be a semilattice-ordered semigroup and let $\mathrm{cl}$ be an algebraic closure operator on $S$. If the map $\star : X \times X \rightarrow X$ is spectral, then the translations $L_A, R_B : X \rightarrow X$ are spectral for all $A, B \in X$. 
\end{corollary}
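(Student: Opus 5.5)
The plan is to deduce the corollary directly from the finite criteria, comparing Proposition \ref{criterion_for_joint_spectrality} with Proposition \ref{criterion_for_separate_spectrality} (and its left-handed analogue). By symmetry I will treat only the right translation $R_B$ for a fixed $B \in X$; the argument for $L_A$ is identical with the roles of the two coordinates swapped.

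Fix a finite subset $F \subseteq S$. Since $\star$ is spectral, Proposition \ref{criterion_for_joint_spectrality} gives a finite family of pairs $(G_i, H_i) \in \mathcal{P}_\mathrm{fin}(S)\times\mathcal{P}_\mathrm{fin}(S)$, $i \in I$, such that for all $A, A' \in X$ we have $F \subseteq A \star A'$ if and only if $G_i \subseteq A$ and $H_i \subseteq A'$ for some $i \in I$. Specialize the second argument to the fixed set $B$, and put
\[
I_B := \{ i \in I \mid H_i \subseteq B\}.
\]
This is a finite (possibly empty) subset of $I$. For every $A \in X$ the specialization gives
\[
F \subseteq A \star B \iff \exists i \in I_B \text{ such that } G_i \subseteq A.
\]
So the family $\{G_i\}_{i \in I_B}$ witnesses the condition in Proposition \ref{criterion_for_separate_spectrality} for $F$. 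Since $F$ was arbitrary, $R_B$ is spectral.

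There is essentially no obstacle here. The only care needed is that Proposition \ref{criterion_for_separate_spectrality} allows an empty family: if $I_B = \varnothing$, then no $A$ satisfies $F \subseteq A\star B$, which is consistent with the criterion.

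An alternative, purely topological route would be to note that $R_B$ is the composite of the map $A \mapsto (A,B)$ with $\star$. This inclusion is spectral, because the preimage of $D_X(G)\times X \cup X \times D_X(H)$ is either $X$ or $D_X(G)$. However, the finite criterion above avoids having to verify spectrality of that inclusion in detail, so I would present the criterion-based argument.
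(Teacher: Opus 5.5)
Your proposal is correct and matches the paper's proof. The paper likewise takes the finite family from Proposition~\ref{criterion_for_joint_spectrality}, restricts it to $I_B = \{i \in I \mid H_i \subseteq B\}$, and concludes by Proposition~\ref{criterion_for_separate_spectrality}, with left translations handled analogously.
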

\begin{proof}
        Let $B \in X$ and let $F$ be a finite subset of $S$. By Proposition \ref{criterion_for_joint_spectrality}, there exist finitely many pairs $(G_i, H_i)\in\mathcal{P}_\mathrm{fin}(S)\times\mathcal{P}_\mathrm{fin}(S)$, $i\in I$, such that for all $A, B' \in X$, $$F \subseteq A \star B' \iff \exists i\in I\text{ such that } G_i \subseteq A\text{ and } H_i\subseteq B'.$$ Now define $$I_B := \{i\in I \mid H_i \subseteq B\}$$ and then $$F \subseteq A \star B \iff \exists i\in I_B\text{ such that } G_i\subseteq A.$$ By Proposition \ref{criterion_for_separate_spectrality}, the right translation $R_B: X \to X$ is spectral. The proof for left translations is analogous. 
\end{proof}

\begin{definition}\cite{Herrmann}
    Let $S$ be a semilattice-ordered semigroup and let $\mathrm{cl}$ be a closure operator on $S$. A subset $A \in X$ is said to be \emph{finitely generated} if there exists a finite set $F \in\mathcal{P}_\mathrm{fin}(S)$ such that $A = F^\mathrm{cl}$.   
\end{definition}
We shall denote by $X_\mathrm{fin}$ the family of all finitely generated $\mathrm{cl}$-closed subsets of $S$, i.e. 
\begin{equation*}
    X_\mathrm{fin} := \{F^\mathrm{cl} \mid F \in \mathcal{P}_\mathrm{fin}(S)\} \subseteq X. 
\end{equation*}
\begin{prop}\label{spectrality_of_star_in_terms_of_finitely_generated_subsets}
    Let $S$ be a semilattice-ordered semigroup and let $\mathrm{cl}$ be an algebraic closure operator on $S$. The map $\star : X \times X \rightarrow X$ is spectral if and only if for every finite subset $F \subseteq S$ there exists a finite (possibly empty) family of pairs $(G_i, H_i) \in X_\mathrm{fin} \times X_\mathrm{fin}$, $i \in I$, such that for all $A, B\in X_\mathrm{fin}$, 
    \begin{equation}\label{criterion_for_fin}
        F \subseteq A \star B \iff \exists i\in I\text{ such that } G_i \subseteq A\text{ and } H_i \subseteq B. 
    \end{equation}
\end{prop}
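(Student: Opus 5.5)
We reduce both directions to Proposition~\ref{criterion_for_joint_spectrality}. The bridge is a simple observation: for a finite set $G \subseteq S$ and any $A \in X$, we have $G \subseteq A$ if and only if $G^\mathrm{cl} \subseteq A$, because $A$ is $\mathrm{cl}$-closed.

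\emph{($\Rightarrow$).} Assume $\star$ is spectral and fix a finite $F \subseteq S$. Proposition~\ref{criterion_for_joint_spectrality} gives finitely many pairs $(G_i,H_i)$ of finite sets satisfying its equivalence for all $A,B\in X$. We replace each pair by $(G_i^\mathrm{cl},H_i^\mathrm{cl}) \in X_\mathrm{fin}\times X_\mathrm{fin}$. By the observation, the equivalence \eqref{criterion_for_fin} then holds for all $A,B \in X$, and in particular for all $A,B\in X_\mathrm{fin}$.

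\emph{($\Leftarrow$).} Assume the condition on $X_\mathrm{fin}$. Fix a finite $F$ and the pairs $(G_i,H_i)\in X_\mathrm{fin}\times X_\mathrm{fin}$. Write $G_i = P_i^\mathrm{cl}$ and $H_i = Q_i^\mathrm{cl}$ with $P_i,Q_i$ finite. We verify the condition of Proposition~\ref{criterion_for_joint_spectrality} with the finite pairs $(P_i,Q_i)$; this means proving, for arbitrary $A,B\in X$,
\[
F\subseteq A\star B \iff \exists i\in I \text{ such that } P_i\subseteq A \text{ and } Q_i \subseteq B .
\]

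For ``$\Leftarrow$'', suppose $P_i\subseteq A$ and $Q_i\subseteq B$. Then $P_i^\mathrm{cl}\star Q_i^\mathrm{cl}$ contains $F$ by \eqref{criterion_for_fin}, applied with $A=G_i$, $B=H_i$. Since $G_i \subseteq A$ and $H_i \subseteq B$, we have $G_iH_i\subseteq AB$, and order-preservation gives $F\subseteq (G_iH_i)^\mathrm{cl}\subseteq (AB)^\mathrm{cl} = A\star B$.

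For ``$\Rightarrow$'', suppose $F\subseteq (AB)^\mathrm{cl}$. As in the proof of Proposition~\ref{the_inverse_image_of_subbasic_closed_set_open_in_patch_topology}(1), algebraicity yields finite sets $G\subseteq A$ and $H\subseteq B$ with $F\subseteq (GH)^\mathrm{cl}$. Hence $F \subseteq G^\mathrm{cl}\star H^\mathrm{cl}$, using that $GH \subseteq G^\mathrm{cl}H^\mathrm{cl}$ together with monotonicity. Applying \eqref{criterion_for_fin} to $G^\mathrm{cl},H^\mathrm{cl}\in X_\mathrm{fin}$ gives some $i$ with $G_i\subseteq G^\mathrm{cl}\subseteq A$ and $H_i\subseteq H^\mathrm{cl}\subseteq B$. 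In particular $P_i\subseteq A$ and $Q_i\subseteq B$, as required.

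The main point needing care is this last step: passing from arbitrary $A,B\in X$ to finitely generated ones via algebraicity. It requires checking that $\star$ is monotone in both arguments, which holds by order-preservation of $\mathrm{cl}$. The empty family $I = \varnothing$ causes no trouble: both sides of each equivalence are then simply false, and the argument goes through unchanged.
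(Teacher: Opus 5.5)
Your proof is correct and follows essentially the same route as the paper. Both reduce to Proposition~\ref{criterion_for_joint_spectrality}; for the converse both write $G_i = P_i^\mathrm{cl}$, $H_i = Q_i^\mathrm{cl}$ and use algebraicity to pass from arbitrary $A,B\in X$ to finitely generated closed subsets of them. Your explicit replacement of $(G_i,H_i)$ by $(G_i^\mathrm{cl},H_i^\mathrm{cl})$ in the forward direction, justified by $G\subseteq A \iff G^\mathrm{cl}\subseteq A$ for $A\in X$, spells out a step the paper only calls ``clear''.
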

\begin{proof}
    The implication from left to right follows clearly from Proposition \ref{criterion_for_joint_spectrality}. Conversely, fix a finite subset $F \subseteq S$. Then there exists a finite (possibly empty) family of pairs $(G_i, H_i) \in X_\mathrm{fin} \times X_\mathrm{fin}$, $i \in I$, such that the condition \eqref{criterion_for_fin} holds. For every $i\in I$ we have $G_i = E_i^\mathrm{cl}$ and $H_i = K_i^\mathrm{cl}$ for some $E_i, K_i \in \mathcal{P}_\mathrm{fin}(S)$. We claim that for all $A, B \in X$, 
    \begin{equation*}
        F \subseteq A \star B \iff \exists i\in I \text{ such that } E_i \subseteq A \text{ and } K_i \subseteq B, 
    \end{equation*}
    which yields that $\star$ is spectral (by Proposition \ref{criterion_for_joint_spectrality}). 
    
    First assume that $F \subseteq A \star B$ with $A, B \in X$. Since $F \subseteq (AB)^\mathrm{cl}$, algebraicity yields a finite set $K \subseteq AB$ such that $F\subseteq K^\mathrm{cl}$. Choose finite sets $C\subseteq A$ and $D\subseteq B$ such that $K\subseteq CD$. Then $F \subseteq K^\mathrm{cl} \subseteq (CD)^\mathrm{cl}$. Now put $A' := C^\mathrm{cl}$ and $B' := D^\mathrm{cl}$. Since $A, B$ are $\mathrm{cl}$-closed and $C \subseteq A$, $D \subseteq B$, we have $A' \subseteq A$ and $B' \subseteq B$. Moreover, $$F \subseteq (CD)^\mathrm{cl} \subseteq (A'B')^\mathrm{cl} =A'\star B'.$$ Applying \eqref{criterion_for_fin} to $A', B' \in X_\mathrm{fin}$, we have $G_i \subseteq A'$ and $H_i \subseteq B'$ for some $i \in I$. Thus, $E_i \subseteq A$ and $K_i \subseteq B$, as required. 
    
    Now assume that $E_i \subseteq A$ and $K_i \subseteq B$ for some $i\in I$. Then $G_i = E_i^\mathrm{cl} \subseteq A$ and $H_i = K_i^\mathrm{cl} \subseteq B$, since $A, B$ are $\mathrm{cl}$-closed. Applying \eqref{criterion_for_fin} to $G_i, H_i\in X_\mathrm{fin}$, we have $F \subseteq G_i \star H_i \subseteq A \star B$. This completes the proof. 
\end{proof}
The next corollary brings together the preceding characterizations of spectrality of the map $\star$. 
\begin{corollary}\label{equivalent_conditions_on_spectrality}
    Let $S$ be a semilattice-ordered semigroup and let $\mathrm{cl}$ be an algebraic closure operator on $S$. Then the following conditions are equivalent: 
    \begin{enumerate}[label=(\arabic*)]
        \item the map $\star : X \times X \rightarrow X$ is spectral, 
        \item the map $\star: X^\mathrm{cons}\times X^\mathrm{cons}\to X^\mathrm{cons}$ is continuous, 
        \item for every finite subset $F \subseteq S$ there exists a finite (possibly empty) family of pairs $(G_i, H_i) \in \mathcal{P}_\mathrm{fin}(S) \times \mathcal{P}_\mathrm{fin}(S)$, $i \in I$, such that for all $A, B\in X$, 
        \begin{equation*}
            F \subseteq A \star B \iff \exists i \in I \text{ such that } G_i \subseteq A \text{ and } H_i\subseteq B, 
        \end{equation*}
        \item for every finite subset $F \subseteq S$ there exists a finite (possibly empty) family of pairs $(G_i, H_i) \in X_\mathrm{fin} \times X_\mathrm{fin}$, $i \in I$, such that for all $A, B\in X_\mathrm{fin}$, 
        \begin{equation*}
            F \subseteq A \star B \iff \exists i\in I \text{ such that } G_i \subseteq A\text{ and } H_i \subseteq B.
        \end{equation*}
    \end{enumerate}
\end{corollary}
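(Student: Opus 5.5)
The equivalences (1)$\iff$(3) and (3)$\iff$(4) are already available: the first is Proposition \ref{criterion_for_joint_spectrality}, and the second follows by composing Proposition \ref{criterion_for_joint_spectrality} with Proposition \ref{spectrality_of_star_in_terms_of_finitely_generated_subsets}, both of which characterize (1). So the only new content is that (2) is equivalent to the others. I would prove (1)$\Rightarrow$(2) and (2)$\Rightarrow$(1).

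For (1)$\Rightarrow$(2), Theorem \ref{construction_of_spectral_space} says $X$ is spectral, so $X\times X$ is spectral as well. Lemma \ref{spectrality_implies_continuity_in_the_patch_topology} then shows that a spectral $\star$ is continuous from $(X\times X)^\mathrm{cons}$ to $X^\mathrm{cons}$. Since $(X\times X)^\mathrm{cons}=X^\mathrm{cons}\times X^\mathrm{cons}$, as recalled in Section \ref{background}, this is exactly (2).

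For (2)$\Rightarrow$(1), it suffices to show that $\star^{-1}(D_X(F))$ is quasi-compact and open in $X\times X$ for every finite $F$, because the sets $D_X(F)$ form a subbasis. Preimages of finite intersections are finite intersections, and these stay quasi-compact open in a spectral space, so basic sets are covered too; a general quasi-compact open set is a finite union of basic sets. The argument runs in four steps.

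\begin{enumerate}[label=(\roman*)]
\item Openness. The set $\star^{-1}(V_X(F))$ is open in the patch topology by Proposition \ref{the_inverse_image_of_subbasic_closed_set_open_in_patch_topology}(1).
\item Patch-closedness. By Lemma \ref{quasi_compactness_of_subbasic_opens}, $V_X(F)$ is patch-clopen. Under (2), $\star^{-1}(V_X(F))$ is therefore closed in $(X\times X)^\mathrm{cons}$. Combined with (i), it is patch-clopen.
\item Compactness. By Theorem \ref{spectral_space_with_patch_top_is_compact}, the patch space is compact. Hence $\star^{-1}(V_X(F))$ is patch-compact.
\item Finite subcover. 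Cover $\star^{-1}(V_X(F))$ by the patch-open sets $V_X(G)\times V_X(H)$ from \eqref{the_form_of_the_inverse_image_of_closed_subbasic_set}. Compactness extracts finitely many pairs $(G_i,H_i)$, which gives condition (3) and hence (1) by Proposition \ref{criterion_for_joint_spectrality}.
\end{enumerate}

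An alternative finish for (2)$\Rightarrow$(1) goes through specialization. The set $\star^{-1}(D_X(F))$ is patch-closed, being the complement of the patch-open set in (i). It is also stable under specialization, which in the product means coordinatewise inclusion: if $(A',B')\subseteq(A,B)$ coordinatewise, then $A'\star B'\subseteq A\star B$, so $F\nsubseteq A\star B$ implies $F\nsubseteq A'\star B'$. The danger is that Lemma \ref{closed_in_patch_and_stable_under_specialization_is_closed} would make this set \emph{closed}, when we need it open. So I would use the compactness route in the list above rather than this one.

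The main point requiring care is step (ii): we need $V_X(F)$ to be closed in the patch topology, not merely open. This holds because $D_X(F)$ is quasi-compact open by Lemma \ref{quasi_compactness_of_subbasic_opens}, so its complement $V_X(F)$ is patch-clopen. Apart from this, the argument only reuses the compactness step already carried out in the proof of Proposition \ref{criterion_for_joint_spectrality}.
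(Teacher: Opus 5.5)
Your argument is correct, but your proof of (2)$\Rightarrow$(1) takes a different route from the paper's. You go through (3). Under (2), $\star^{-1}(V_X(F))$ is patch-closed, hence patch-compact, and compactness applied to the explicit cover \eqref{the_form_of_the_inverse_image_of_closed_subbasic_set} yields the finite family. Proposition \ref{criterion_for_joint_spectrality} then gives (1). This is just the compactness step of that proposition, with (2) instead of spectrality as the source of patch-compactness. Your step (i) is redundant, since (2) alone already makes the preimage patch-clopen.

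The paper instead proves (1) directly. The set $\star^{-1}(D_X(F))$ is patch-clopen, hence patch-compact, hence quasi-compact in the coarser spectral topology. It is open because its complement $\star^{-1}(V_X(F))$ is patch-closed and stable under specialization, so Lemma \ref{closed_in_patch_and_stable_under_specialization_is_closed} makes the complement spectrally closed. Your route exhibits the finite family of (3) explicitly, but it relies on the algebraic description of $\star^{-1}(V_X(F))$ from Proposition \ref{the_inverse_image_of_subbasic_closed_set_open_in_patch_topology}(1). The paper's route uses only monotonicity of $\star$, and is an instance of the general fact that a patch-continuous, specialization-preserving map between spectral spaces is spectral.

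The alternative finish you discarded is exactly the paper's argument, and you discarded it because of a reversed convention. In the hull-kernel topology $\overline{\{A\}}=\{B \mid A\subseteq B\}$, so specializations are \emph{supersets}. Hence $\star^{-1}(D_X(F))$ is stable under generization, not specialization, and Lemma \ref{closed_in_patch_and_stable_under_specialization_is_closed} does not apply to it. The lemma applies to the upward-closed complement $\star^{-1}(V_X(F))$, and there it gives exactly the openness of $\star^{-1}(D_X(F))$ you need, so the ``danger'' is not real. This slip does not affect your main argument.
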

\begin{proof}
    By Lemma \ref{spectrality_implies_continuity_in_the_patch_topology}, the condition $(1)$ implies $(2)$. The equivalence of $(1)$ and $(3)$ follows from Proposition \ref{criterion_for_joint_spectrality}, whereas the equivalence of $(1)$ and $(4)$ follows from the preceding proposition. Therefore, it remains to prove that the condition $(2)$ implies $(1)$.  

    Assume that the map $\star: X^\mathrm{cons}\times X^\mathrm{cons} \to X^\mathrm{cons}$ is continuous. It is enough to show that $\star^{-1}(D_X(F))$ is quasi-compact and open in $X \times X$ for every finite subset $F \subseteq S$. Fix such a set $F$. Since $D_X(F)$ is quasi-compact and open in $X$, it is clopen in $X^\mathrm{cons}$. Hence $\star^{-1}(D_X(F))$ is clopen in $X^\mathrm{cons}\times X^\mathrm{cons}$. Since this space is quasi-compact, $\star^{-1}(D_X(F))$ is quasi-compact with respect to the patch topology. As the patch topology is finer than the spectral topology, it is also quasi-compact in the spectral topology on $X \times X$. 

    It remains to prove that $\star^{-1}(D_X(F))$ is open in the spectral topology. Consider its complement $$\star^{-1}\big(V_X(F)\big) = \{(A, B)\in X \times X \mid F \subseteq A \star B\},$$ which is a closed set in $X^\mathrm{cons} \times X^\mathrm{cons}$. Moreover, it is stable under specialization. Indeed, if $(A, B) \in \star^{-1}(V_X(F))$ and $A \subseteq A'$, $B\subseteq B'$, then $F \subseteq A\star B \subseteq A'\star B'$. Thus, $(A', B')\in \star^{-1}(V_X(F))$. By Lemma \ref{closed_in_patch_and_stable_under_specialization_is_closed}, the set $\star^{-1}(V_X(F))$ is closed in the spectral topology on $X \times X$ and in consequence the set $\star^{-1}(D_X(F))$ is open. 
\end{proof}
\begin{definition}\cite[Definition 1.1]{schmitz}
    A preordered set $(P, \leqslant)$ is \emph{well-quasi-ordered} if every infinite sequence $(x_n)_{n \in\mathbb{N}}$ of elements of $P$ contains an increasing pair, i.e. there exist indices $m < n$ such that $x_m \leqslant x_n$. 
\end{definition}
The next proposition gives a useful sufficient condition for spectrality of the operation $\star$ in terms of the order structure of finitely generated closed subsets of $S$. 

\begin{prop}\label{well_quasi_order_implies_spectrality}
    Let $S$ be a semilattice-ordered semigroup and let $\mathrm{cl}$ be an algebraic closure operator on $S$. If the poset $(X_\mathrm{fin}, \subseteq)$ is well-quasi-ordered, then the map $\star : X \times X \rightarrow X$ is spectral. 
\end{prop}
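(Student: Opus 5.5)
We verify condition (4) of Corollary~\ref{equivalent_conditions_on_spectrality}. Fix a finite subset $F \subseteq S$ and consider the set
\begin{equation*}
    \mathcal{M}_F := \{(A, B) \in X_\mathrm{fin} \times X_\mathrm{fin} \mid F \subseteq A \star B\}.
\end{equation*}
This set is upward closed in $X_\mathrm{fin}\times X_\mathrm{fin}$ under the componentwise inclusion order. Indeed, if $A \subseteq A'$ and $B \subseteq B'$, then $AB \subseteq A'B'$. Order-preservation then gives $A\star B \subseteq A'\star B'$.

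\emph{Step 1.} The product of two well-quasi-orders, ordered componentwise, is again a well-quasi-order. This is the standard fact obtained via the Ramsey/infinite-subsequence characterization. Every infinite sequence in a well-quasi-order has an infinite nondecreasing subsequence. Apply this to the first coordinates, then to the second coordinates along that subsequence, and we obtain an increasing pair. Hence $(X_\mathrm{fin}\times X_\mathrm{fin}, \subseteq\times\subseteq)$ is a well-quasi-order.

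\emph{Step 2.} In a well-quasi-order, every upward closed set is generated by finitely many elements. More precisely, the set of its minimal elements is finite, since it is an antichain. Every element lies above some minimal element, because a well-quasi-order is well-founded and the order here is a partial order, so there are no infinite strictly descending chains. Thus there are finitely many pairs $(G_i, H_i) \in \mathcal{M}_F$, $i\in I$, such that every $(A,B)\in\mathcal{M}_F$ satisfies $G_i\subseteq A$ and $H_i\subseteq B$ for some $i$. If $\mathcal{M}_F = \varnothing$, we take $I=\varnothing$.

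\emph{Step 3.} We check the equivalence in (4). If $F\subseteq A\star B$ with $A,B\in X_\mathrm{fin}$, then $(A,B)\in\mathcal{M}_F$ lies above some $(G_i,H_i)$. Conversely, if $G_i\subseteq A$ and $H_i\subseteq B$, then upward closedness gives $F\subseteq G_i\star H_i\subseteq A\star B$. Corollary~\ref{equivalent_conditions_on_spectrality} then yields spectrality of $\star$.

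The only step needing care is the finite-basis argument in Steps 1 and 2, in particular the well-foundedness needed to reach a minimal element. Each part is classical, so I expect no real obstacle. The main conceptual point is that condition (4) reduces the problem to $X_\mathrm{fin}$, where the well-quasi-order hypothesis applies.
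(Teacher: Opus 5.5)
Your proof is correct and follows essentially the same route as the paper. The paper argues the same way: the set of pairs $(A,B)\in X_\mathrm{fin}\times X_\mathrm{fin}$ with $F\subseteq A\star B$ is upward closed, a product of well-quasi-orders is well-quasi-ordered, and upward closed sets have finitely many minimal generators, after which the finitely-generated criterion (Proposition~\ref{spectrality_of_star_in_terms_of_finitely_generated_subsets}, i.e.\ condition (4) of Corollary~\ref{equivalent_conditions_on_spectrality}) gives spectrality. The only difference is that you sketch the two standard well-quasi-order facts yourself, whereas the paper cites them from Schmitz.
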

\begin{proof}
    Fix a finite subset $F \subseteq S$ and define 
    \begin{equation*}
        U_F := \{(G, H)\in X_\mathrm{fin} \times X_\mathrm{fin} \mid F \subseteq G \star H\}. 
    \end{equation*}
    The map $\star$ is order-preserving in each variable and hence the set $U_F$ is upward closed with respect to the product order on $X_\mathrm{fin} \times X_\mathrm{fin}$. Since $(X_\mathrm{fin}, \subseteq~)$ is well-quasi-ordered, $X_\mathrm{fin} \times X_\mathrm{fin}$ is also well-quasi-ordered \cite[Lemma 1.8]{schmitz}. Thus, every upward closed subset of $X_\mathrm{fin} \times X_\mathrm{fin}$ can be written as an upward closure of its finitely many minimal elements \cite[Lemma 1.7]{schmitz}. In particular, the set $U_F$ can be written in such a form. Hence there exists a finite (possibly empty) family of pairs $(G_i, H_i) \in X_\mathrm{fin} \times X_\mathrm{fin}$, $i \in I$, such that for all $A, B\in X_\mathrm{fin}$, 
    \begin{equation*}
        F \subseteq A\star B \iff \exists i\in I \text{ such that } G_i \subseteq A \text{ and } H_i \subseteq B. 
    \end{equation*}
    So by Proposition \ref{spectrality_of_star_in_terms_of_finitely_generated_subsets}, the map $\star : X \times X \rightarrow X$ is spectral. 
\end{proof}
\begin{definition}
    Let $S$ be a semilattice-ordered semigroup. A closure operator $\mathrm{cl}$ on $S$ is said to be \emph{order-compatible} if $$x \leqslant y \implies \{x\}^\mathrm{cl} \subseteq \{y\}^\mathrm{cl}$$ for all $x, y \in S$. 
\end{definition}
\begin{prop}
    Let $S$ be a semilattice-ordered semigroup and let $\mathrm{cl}$ be an order-compatible closure operator on $S$. If the poset $(S, \leqslant)$ is well-quasi-ordered, then the poset $(X_\mathrm{fin}, \subseteq)$ is well-quasi-ordered. 
\end{prop}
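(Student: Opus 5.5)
The plan is to compare finitely generated closed sets through their finite generators, ordered by the \emph{domination} (Hoare) preorder on $\mathcal{P}_\mathrm{fin}(S)$. For finite $F, G \subseteq S$ write $F \sqsubseteq G$ if for every $f \in F$ there is $g \in G$ with $f \leqslant g$. The argument then has two steps:
\begin{enumerate}[label=(\roman*)]
    \item $F \sqsubseteq G$ implies $F^\mathrm{cl} \subseteq G^\mathrm{cl}$;
    \item $(\mathcal{P}_\mathrm{fin}(S), \sqsubseteq)$ is well-quasi-ordered whenever $(S, \leqslant)$ is.
\end{enumerate}
Given these, take any sequence $(A_n)_{n\in\mathbb{N}}$ in $X_\mathrm{fin}$ and choose finite $F_n$ with $A_n = F_n^\mathrm{cl}$. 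By (ii) there are indices $m<n$ with $F_m \sqsubseteq F_n$, and by (i) this gives $A_m \subseteq A_n$. So $(X_\mathrm{fin},\subseteq)$ is well-quasi-ordered.

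Step (i) is where order-compatibility is used. Let $f \in F$ and pick $g \in G$ with $f \leqslant g$. Order-compatibility gives $f \in \{f\}^\mathrm{cl} \subseteq \{g\}^\mathrm{cl}$. Order-preservation gives $\{g\}^\mathrm{cl} \subseteq G^\mathrm{cl}$. Hence $F \subseteq G^\mathrm{cl}$, and applying order-preservation and idempotence yields $F^\mathrm{cl} \subseteq (G^\mathrm{cl})^\mathrm{cl} = G^\mathrm{cl}$. The case $F = \varnothing$ is trivial, since $\varnothing \sqsubseteq G$ and $\varnothing^\mathrm{cl} \subseteq G^\mathrm{cl}$.

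Step (ii) is the only nontrivial ingredient and the expected main obstacle, although it is a classical fact. My first choice is to deduce it from Higman's lemma. List each $F_n$ as a finite word over $S$ in any order. Higman's lemma, applied to the well-quasi-ordered alphabet $(S,\leqslant)$ \cite{schmitz}, gives $m<n$ such that the word for $F_m$ embeds into the word for $F_n$ by a strictly increasing map of positions sending each letter to a letter that is $\geqslant$ it. In particular every element of $F_m$ lies below some element of $F_n$, that is, $F_m \sqsubseteq F_n$.

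If I wanted to avoid citing Higman's lemma, I would run a direct Nash-Williams minimal bad sequence argument on finite sets ordered by $\sqsubseteq$:
\begin{enumerate}[label=(\alph*)]
    \item Assume a bad sequence exists and choose one minimal with respect to cardinality at each position.
    \item Its members are nonempty, since $\varnothing \sqsubseteq$ everything. Remove one element $x_n$ from each $F_n$.
    \item Pass to a subsequence along which the $x_n$ increase, using that every sequence in a well-quasi-order has an infinite increasing subsequence.
    \item Splice the resulting smaller sets into the minimal sequence to contradict minimality.
\end{enumerate}
The citation route is shorter, so I would use it and mention the direct argument only as a remark.
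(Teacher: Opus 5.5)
Your proposal is correct and follows essentially the same route as the paper. The paper also compares finite generators via the Hoare preorder on $\mathcal{P}_\mathrm{fin}(S)$, uses order-compatibility exactly as in your step (i), and obtains step (ii) by citing the well-quasi-ordering of $(\mathcal{P}_\mathrm{fin}(S), \sqsubseteq_\mathrm{H})$ from \cite[Exercise 1.13(4)]{schmitz}; your derivation from Higman's lemma (or your minimal bad sequence sketch) is a valid justification of that cited fact.
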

\begin{proof}
    Consider the Hoare preorder $\sqsubseteq_\mathrm{H}$ on $\mathcal{P}_\mathrm{fin}(S)$ defined by $$F \sqsubseteq_\mathrm{H} G \iff \forall x\in F \hspace{3pt} \exists y\in G \text{ such that } x \leqslant y.$$ Let $(A_n)_{n\in\mathbb{N}}$ be an infinite sequence of elements of $X_\mathrm{fin}$. For each $n\in\mathbb{N}$ choose a finite subset $F_n\subseteq S$ such that $A_n = F_n^\mathrm{cl}$. Since $(S, \leqslant)$ is well-quasi-ordered, the set $\mathcal{P}_\mathrm{fin}(S)$ is well-quasi-ordered with respect to $\sqsubseteq_\mathrm{H}$ (see \cite[Exercise 1.13(4)]{schmitz}). Thus, there exist indices $m < n$ such that $F_m \sqsubseteq_\mathrm{H} F_n$. We claim that $A_m \subseteq A_n$. Let $x\in F_m$. By the definition of the Hoare preorder, there exists $y\in F_n$ such that $x \leqslant y$. Using order-compatibility of $\mathrm{cl}$, we obtain $$\{x\}^\mathrm{cl} \subseteq \{y\}^\mathrm{cl} \subseteq F_n^\mathrm{cl} = A_n.$$ In particular, $x\in A_n$ and hence $F_m \subseteq A_n$. Since $A_n$ is $\mathrm{cl}$-closed, we get $$A_m = F_m^\mathrm{cl} \subseteq A_n^\mathrm{cl} = A_n.$$ Thus, every infinite sequence of elements of $X_\mathrm{fin}$ contains an increasing pair. Therefore, $(X_\mathrm{fin}, \subseteq)$ is well-quasi-ordered.
\end{proof}
\begin{corollary}\label{algebraicity_order_compatibility_and_well_quasi_order_on_original_set_imply_spectrality_of_star}
    Let $S$ be a semilattice-ordered semigroup and let $\mathrm{cl}$ be an algebraic, order-compatible closure operator on $S$. If the poset $(S, \leqslant)$ is well-quasi-ordered, then the map $\star: X \times X \to X$ is spectral. 
\end{corollary}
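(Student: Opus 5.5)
This corollary is obtained by chaining the two results immediately preceding it, so the plan is short. Assume that $\mathrm{cl}$ is algebraic and order-compatible, and that $(S,\leqslant)$ is well-quasi-ordered.

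First, I apply the preceding proposition on order-compatible closure operators. It requires only order-compatibility of $\mathrm{cl}$ and the well-quasi-ordering of $(S,\leqslant)$, and it yields that the poset $(X_\mathrm{fin},\subseteq)$ is well-quasi-ordered. Algebraicity plays no role in this step.

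Second, since $\mathrm{cl}$ is algebraic and $(X_\mathrm{fin},\subseteq)$ is well-quasi-ordered, Proposition \ref{well_quasi_order_implies_spectrality} applies directly and gives that $\star\colon X\times X\to X$ is spectral. Algebraicity is needed here for two reasons. It makes $X$ a spectral space via Theorem \ref{construction_of_spectral_space}, so that the notion of a spectral map is meaningful. It also underlies the reduction to finitely generated sets in Proposition \ref{spectrality_of_star_in_terms_of_finitely_generated_subsets}, which Proposition \ref{well_quasi_order_implies_spectrality} invokes.

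There is no genuine obstacle. The only thing to check is that the hypotheses line up: each cited result is applied to the same closure operator and the same space $X$, and $X_\mathrm{fin}$ is defined identically in both. The substantive work, namely the finite-basis property of upward closed sets in a product of well-quasi-orders and the Hoare-preorder argument, has already been carried out in the two cited results.
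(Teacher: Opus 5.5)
Your proposal is correct and is the paper's approach: the corollary is stated without a separate proof, as the immediate combination of the preceding proposition (order-compatibility of $\mathrm{cl}$ and well-quasi-ordering of $(S,\leqslant)$ give that $(X_\mathrm{fin},\subseteq)$ is well-quasi-ordered) with Proposition \ref{well_quasi_order_implies_spectrality}. Your account of where algebraicity is used is also accurate.
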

The preceding results obtain spectrality of $\star$ from properties of the order of finitely generated closed subsets. There is also another situation in which spectrality of $\star$ can be recovered. Namely, when the induced semigroup $(X, \star)$ is a semilattice, spectrality of the translations already determines spectrality of the binary operation. 
\begin{prop}\label{separate_spectrality_implies_joint_for_semilattices}
    Let $S$ be a semilattice-ordered semigroup and let $\mathrm{cl}$ be an algebraic closure operator. Suppose that the semigroup $(X, \star)$ is a semilattice. Then the following conditions are equivalent: 
    \begin{enumerate}[label=(\arabic*)]
        \item the map $\star: X \times X \to X$ is spectral, 
        \item for every $B \in X$, the right translation $R_B: X \to X$ is spectral, 
        \item for every $A \in X$, the left translation $L_A: X \to X$ is spectral. 
    \end{enumerate}
\end{prop}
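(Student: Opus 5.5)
The implications $(1)\Rightarrow(2)$ and $(1)\Rightarrow(3)$ follow from Corollary~\ref{spectrality_of_star_implies_spectrality_of_all_translations}. Since $(X,\star)$ is a semilattice, $\star$ is commutative, so $L_A=R_A$ for every $A\in X$. Hence $(2)\Leftrightarrow(3)$ is immediate, and the whole content is $(2)\Rightarrow(1)$.

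For $(2)\Rightarrow(1)$ I would use the equivalence $(1)\Leftrightarrow(2)$ of Corollary~\ref{equivalent_conditions_on_spectrality}. It then suffices to show that $\star\colon X^\mathrm{cons}\times X^\mathrm{cons}\to X^\mathrm{cons}$ is continuous. Since the sets $D_X(F)$, $F\in\mathcal P_\mathrm{fin}(S)$, together with their complements $V_X(F)$ generate the patch topology, it suffices to show that each $U_F:=\star^{-1}(V_X(F))$ is patch-clopen in $X\times X$. Proposition~\ref{the_inverse_image_of_subbasic_closed_set_open_in_patch_topology}(1) already shows that $U_F$ is patch-open. So only patch-closedness remains: if $(A_\alpha,B_\alpha)\to(A,B)$ in the (compact Hausdorff) patch topology and $F\subseteq A_\alpha\star B_\alpha$ for all $\alpha$, then $F\subseteq A\star B$.

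The tools are separate continuity and the semilattice identities. By Lemma~\ref{spectrality_implies_continuity_in_the_patch_topology}, each $R_B$ is patch-continuous, and so is each $L_A=R_A$. The identities are idempotence $C\star C=C$ and associativity/commutativity, giving for instance $A\star(A\star B)=A\star B$. I would also use monotonicity of $\star$ in each variable for inclusion, and the fact that $U_F$ is upward closed. The plan is to pass to a subnet along which $C_\alpha:=A_\alpha\star B_\alpha$ converges in the patch topology to some $C\in X$. Since $V_X(F)$ is patch-closed, $F\subseteq C$. I would then try to compare $C$ with $A\star B$ using only one-variable limits. Because $A_\alpha\star C_\alpha=C_\alpha$ and $B_\alpha\star C_\alpha=C_\alpha$, I aim to show $C\subseteq A\star B$ by writing $C$ as an iterated limit in which only one argument moves at a time. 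For example, I would first let $A_\alpha\to A$ with the other argument frozen, using continuity of the relevant translation, and then use upward closedness together with idempotence to absorb the error terms.

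The main obstacle is exactly this passage from separate to joint continuity. In general it fails, as the later example in the paper shows, so idempotence must enter essentially. If the net argument stalls, I would switch to the finite criterion of Proposition~\ref{criterion_for_joint_spectrality}. Fix $F$. By algebraicity, $F\subseteq A\star B$ yields finite $G\subseteq A$, $H\subseteq B$ with $F\subseteq G^\mathrm{cl}\star H^\mathrm{cl}$. Set $E:=(G\cup H)^\mathrm{cl}\in X_\mathrm{fin}$, so that $G^\mathrm{cl}\star H^\mathrm{cl}\subseteq E\star E=E$. The next step is to apply Proposition~\ref{criterion_for_separate_spectrality} to the single translation $R_E$, and then use idempotence to turn the finitely many sets it supplies into a uniform finite family of pairs $(G_i,H_i)$. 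What must be verified is that finitely many such $E$ suffice for each $F$; this is the step I expect to be hardest.
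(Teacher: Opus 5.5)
Your reduction is the one the paper uses. You get $(1)\Rightarrow(2),(3)$ from Corollary~\ref{spectrality_of_star_implies_spectrality_of_all_translations} and $(2)\Leftrightarrow(3)$ from commutativity. For $(2)\Rightarrow(1)$ you reduce, via Corollary~\ref{equivalent_conditions_on_spectrality}, to joint continuity of $\star$ on $X^\mathrm{cons}\times X^\mathrm{cons}$, i.e.\ to patch-closedness of each $\star^{-1}(V_X(F))$.

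\textbf{The gap.} That last step is the entire content of the implication, and your proposal does not prove it. Both of your routes stop where the difficulty begins. The missing ingredient is Lawson's joint continuity theorem: a compact Hausdorff semitopological semilattice is a topological semilattice \cite[Theorem VII-4.8]{Gierz_Hofmann_Keimel_Lawson_Mislove_Scott_200}. Here $X^\mathrm{cons}$ is compact Hausdorff by Theorem~\ref{spectral_space_with_patch_top_is_compact}. Each $R_B=L_B$ is patch-continuous by Lemma~\ref{spectrality_implies_continuity_in_the_patch_topology}. So $(X^\mathrm{cons},\star)$ is such a semilattice, $\star$ is jointly patch-continuous, and Corollary~\ref{equivalent_conditions_on_spectrality} gives spectrality. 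That is the paper's whole proof.

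\textbf{Why your net route stalls.} Separate continuity controls only the iterated limit $\lim_\beta\lim_\alpha A_\alpha\star B_\beta=A\star B$, not the limit of the diagonal net $A_\alpha\star B_\alpha$. Example~\ref{separate_spectrality_does_not_imply_joint} shows the two can differ even when all translations are spectral. There $I_n\to I_d$ in the patch topology (pointwise convergence of indicator functions), and $1\in I_n\star I_n$ for every $n$, yet $1\notin I_d\star I_d$.

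What rules this out in the present setting is idempotence, which fails in that example since $I_n\star I_n=\downarrow 1\neq I_n$. Your phrase ``use upward closedness together with idempotence to absorb the error terms'' does not say how idempotence enters. Identities such as $A_\alpha\star C_\alpha=C_\alpha$ still have both arguments moving, so passing to the limit in them again requires the joint continuity you are trying to prove.

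\textbf{Why your finite-criterion route stalls.} The set $E=(G\cup H)^\mathrm{cl}$ depends on the pair $(A,B)$. Proposition~\ref{criterion_for_separate_spectrality} supplies a finite family only for each single $E$. Uniformity over the infinitely many $E$ that occur is precisely what joint spectrality asserts, and nothing in your sketch produces it.

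So either cite Lawson's theorem, or supply a genuinely new argument exploiting idempotence together with inclusion-monotonicity; as written, the key implication is unproved.
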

\begin{proof}
    By Corollary \ref{spectrality_of_star_implies_spectrality_of_all_translations}, the condition $(1)$ implies conditions $(2)$ and $(3)$. Since $\star$ is commutative, we have $L_A = R_A$ for every $A \in X$ and hence conditions $(2)$ and $(3)$ are equivalent. 

    Assume now that the condition $(2)$ holds. Consequently, every right translation $R_B: X^\mathrm{cons} \to X^\mathrm{cons}$ is continuous. By Theorem \ref{spectral_space_with_patch_top_is_compact}, $(X^\mathrm{cons}, \star)$ is a quasi-compact Hausdorff semitopological semilattice. By \cite[Theorem VII-4.8]{Gierz_Hofmann_Keimel_Lawson_Mislove_Scott_200}, it is in fact topological. Therefore, the map $\star: X^\mathrm{cons} \times X^\mathrm{cons} \to X^\mathrm{cons}$ is continuous and Corollary \ref{equivalent_conditions_on_spectrality} implies that $\star: X \times X \to X$ is spectral.
\end{proof}
\subsection{The identity closure operator}
We conclude this section by considering the identity closure operator. In this case $X = \mathcal{P}(S)$ and $A \star B = AB$, so spectrality of the induced operation can be expressed directly in terms of the multiplicative structure of $S$. For an element $s\in S$, the condition $s\in AB$ is equivalent to the existence of a decomposition $s = ab$, where $a\in A$ and $b\in B$. This leads naturally to the finite decomposition property. 
\begin{definition}\cite[Section 2]{DeneufchatelDuchamp}
    A semigroup $S$ has \emph{finite decomposition property} if the set $$\mathrm{Dec}(s) := \big\{(a, b) \in S \times S \mid ab = s\big\}$$ is finite for every $s \in S$. 
\end{definition}
Finite decomposition semigroups and analogous finiteness conditions on related structures have previously been studied in connection with convolution algebras and formal power series \cite{DeneufchatelDuchamp, FahrenbergJohansenStruthZiemianski}. Our next result provides a topological characterization of this property. 
\begin{theorem}\label{spectrality_is_equivalent_to_finite_decomposition_property}
    Let $S$ be a semilattice-ordered semigroup and let $X = \mathcal{P}(S)$. The map $\star : X \times X \to X$ is spectral if and only if $S$ has finite decomposition property. 
\end{theorem}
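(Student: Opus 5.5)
We will use the characterization in Proposition \ref{criterion_for_joint_spectrality} (equivalently Corollary \ref{equivalent_conditions_on_spectrality}(3)), specialized to $\mathrm{cl} = \mathrm{id}_{\mathcal{P}(S)}$. Here $X = \mathcal{P}(S)$, every closure is trivial, and $F \subseteq A \star B$ simply means $F \subseteq AB$.

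\emph{Sufficiency.} Assume $S$ has the finite decomposition property and fix a finite $F \subseteq S$. If $F = \varnothing$, take the single pair $(\varnothing, \varnothing)$. Otherwise write $F = \{s_1, \ldots, s_k\}$ and let the index set $I$ be the product $\prod_{j=1}^k \mathrm{Dec}(s_j)$, which is finite by hypothesis and empty if some $\mathrm{Dec}(s_j)$ is empty. For a choice $i = ((a_1,b_1), \ldots, (a_k,b_k)) \in I$, put
\[
G_i := \{a_1, \ldots, a_k\}, \qquad H_i := \{b_1, \ldots, b_k\}.
\]
If $G_i \subseteq A$ and $H_i \subseteq B$, then each $s_j = a_j b_j \in AB$. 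Conversely, if $F \subseteq AB$, choosing a decomposition $s_j = a_j b_j$ with $a_j \in A$ and $b_j \in B$ for each $j$ produces an index $i \in I$ with $G_i \subseteq A$ and $H_i \subseteq B$. Proposition \ref{criterion_for_joint_spectrality} then gives spectrality of $\star$.

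\emph{Necessity.} Assume $\star$ is spectral and fix $s \in S$. Apply Proposition \ref{criterion_for_joint_spectrality} to $F = \{s\}$ to obtain finitely many pairs $(G_i, H_i)$, $i \in I$, such that for all $A, B \subseteq S$,
\[
s \in AB \iff \exists i \in I \text{ such that } G_i \subseteq A \text{ and } H_i \subseteq B.
\]
Take any $(a,b) \in \mathrm{Dec}(s)$ and test the equivalence with $A = \{a\}$ and $B = \{b\}$. Since $s \in AB$, some $i$ satisfies $G_i \subseteq \{a\}$ and $H_i \subseteq \{b\}$.

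The key step, analogous to the $\mathbb{Z}$ example, is to show that $G_i$ and $H_i$ are both nonempty, so that $G_i = \{a\}$ and $H_i = \{b\}$. Suppose $G_i = \varnothing$. Then $G_i \subseteq \varnothing$ and $H_i \subseteq \{b\}$, so the equivalence forces $s \in \varnothing \cdot \{b\} = \varnothing$, a contradiction. The case $H_i = \varnothing$ is symmetric. Hence the map $\mathrm{Dec}(s) \to I$ sending $(a,b)$ to such an $i$ is injective, because $i$ determines $(a,b)$ as $(G_i, H_i)$. This gives $|\mathrm{Dec}(s)| \leqslant |I| < \infty$.

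The only delicate point is this use of empty sets. Since $\varnothing \in X$, testing the criterion at $A = \varnothing$ or $B = \varnothing$ rules out empty $G_i$ or $H_i$. I expect no further obstacle.
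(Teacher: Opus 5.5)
Your proof is correct. It follows a somewhat different route from the paper's, in that it goes through Proposition~\ref{criterion_for_joint_spectrality} rather than arguing directly with the patch topology.

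For necessity, the paper notes that $\star^{-1}(V_X(s)) = \bigcup_{(a,b)\in\mathrm{Dec}(s)} V_X(a)\times V_X(b)$ is patch-compact and covered by patch-clopen sets. Hence finitely many pairs $(a_1,b_1),\dots,(a_n,b_n)$ \emph{taken from $\mathrm{Dec}(s)$ itself} already cover it, and the singleton test at $(\{a\},\{b\})$ then forces $(a,b)=(a_i,b_i)$. For sufficiency, the paper writes $\star^{-1}(D_X(s))$ as the finite intersection $\bigcap_{(a,b)\in\mathrm{Dec}(s)}\big((D_X(a)\times X)\cup(X\times D_X(b))\big)$. It then reduces arbitrary finite $F$ to singletons via $D_X(F)=\bigcup_{s\in F}D_X(s)$.

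In the paper's version every covering set has singleton generators, because the cover is indexed by actual decompositions. So your extra step of ruling out empty $G_i$ or $H_i$ by testing at $A=\varnothing$ or $B=\varnothing$ is not needed there. Your version, by contrast, uses the criterion as a black box and is purely combinatorial; it never mentions the patch topology. The price is that emptiness check, plus the product-indexed family $\prod_j \mathrm{Dec}(s_j)$ for general $F$ in place of the reduction to singletons.

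You should also state explicitly that $\mathrm{id}_{\mathcal{P}(S)}$ is algebraic, which is trivial, so that the criterion applies.
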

\begin{proof}
    Assume first that the map $\star: X \times X \to X$ is spectral and fix an element $s\in S$. Then $\star^{-1}(D_X(s))$ is quasi-compact and open, so it is clopen with respect to the patch topology on $X \times X$. Therefore, its complement $\star^{-1}(V_X(s))$ is quasi-compact in the patch topology on $X \times X$. Moreover, since $$\star^{-1}\big(V_X(s)\big) = \bigcup_{(a, b)\in\mathrm{Dec}(s)} V_X(a) \times V_X(b)$$ and all sets of the form $V_X(a) \times V_X(b)$ are clopen in $X^\mathrm{cons} \times X^\mathrm{cons}$, we may choose finitely many pairs $(a_1, b_1), \ldots, (a_n, b_n)$ for which 
    \begin{equation}\label{finite_subcover_of_inverse_image}
        \star^{-1}\big(V_X(s)\big) = \bigcup_{i = 1}^n V_X(a_i) \times V_X(b_i).
    \end{equation}
    Now take an element $(a, b)\in\mathrm{Dec}(s)$. Then we have $ab = s$ and in consequence $(\{a\}, \{b\})\in \star^{-1}(V_X(s))$. By \eqref{finite_subcover_of_inverse_image}, we get $a = a_i$ and $b = b_i$ for some $i\in\{1, \ldots, n\}$. Thus, the set $\mathrm{Dec}(s)$ is finite. 

    Conversely, assume that $S$ has finite decomposition property. For every $s \in S$ we have $$\star^{-1}\big(V_X(s)\big) = \bigcup_{(a,b)\in\mathrm{Dec}(s)}V_X(a) \times V_X(b).$$
    Therefore, $$\star^{-1}\big(D_X(s)\big) = \bigcap_{(a,b)\in\mathrm{Dec}(s)} \big((D_X(a) \times X) \cup (X \times D_X(b))\big),$$ which is quasi-compact and open. Since $D_X(F) = \bigcup_{s \in F} D_X(s)$ for every finite subset $F \subseteq S$, the map $\star$ is spectral. 
\end{proof}
The following two examples illustrate Theorem \ref{spectrality_is_equivalent_to_finite_decomposition_property} in commutative and noncommutative settings, respectively. 
\begin{example}
Let $R$ be a Dedekind domain and let $$S := \{I \subseteq R \mid I\text{ is a nonzero ideal of } R\}.$$ We equip $S$ with multiplication $$I\cdot J := \bigg\{\sum_{k = 1}^n a_kb_k \mid n\in\mathbb{N}_+, a_k\in I, b_k\in J\bigg\}$$ and join operation $$I \lor J := I + J = \{a + b \mid a \in I, b\in J\}.$$ Then $(S, \cdot, \lor)$ is a commutative semilattice-ordered semigroup. 

We claim that $S$ has finite decomposition property. Fix an element $I \in S$. If $I \neq R$, then by unique factorization of nonzero ideals in a Dedekind domain, we have $$I = \mathfrak{p}_1^{n_1}\ldots\mathfrak{p}_r^{n_r},$$ where $\mathfrak{p}_1, \ldots, \mathfrak{p}_r$ are distinct nonzero prime ideals and $n_1, \ldots, n_r$ are positive integers. 

Suppose that $I = JK$ for some nonzero ideals $J, K$. Then there are uniquely determined integers $k_1, \ldots, k_r$ such that $$J = \mathfrak{p}_1^{k_1} \ldots\mathfrak{p}_r^{k_r},\qquad K = \mathfrak{p}_1^{n_1 - k_1}\ldots\mathfrak{p}_r^{n_r - k_r},$$ and $0 \leqslant k_i \leqslant n_i$ for each $i\in\{1, \ldots, r\}$. Conversely, every such choice of $k_1, \ldots, k_r$ gives a decomposition $I = JK$. We conclude that $$|\mathrm{Dec}(I)| = \prod_{i = 1}^r (n_i + 1),$$ so the set $\mathrm{Dec}(I)$ is finite. If $I = R$, then $\mathrm{Dec}(R) = \{(R, R)\}$. Therefore, $S$ has finite decomposition property. It follows from Theorem \ref{spectrality_is_equivalent_to_finite_decomposition_property} that, for $X = \mathcal{P}(S)$, the induced multiplication $\star: X \times X \to X$ is spectral. 
\end{example}
\begin{example}
Let $(L, \lor)$ be a semilattice with at least two elements and put $S := \mathbb{N}\times L$. For $(n, x), (m, y) \in S$, define multiplication $$(n, x) \cdot (m, y) := (\max(n, m), x)$$ and join operation $$(n, x) \lor (m, y) := (\min(n, m), x\lor y).$$ Then $(S, \cdot, \lor)$ is a noncommutative semilattice-ordered semigroup. Moreover, if $a, b, c, d \in S$, then $$a^2 = a, \qquad (ab)(cd) = (ac)(bd),$$ so $(S, \cdot)$ is a mode and $(S, \cdot, \lor)$ is a noncommutative modal in the terminology of~\cite{PilitowskaZamojskaDzienio}.

Fix an element $(n, x)\in S$. Then $$\mathrm{Dec}(n , x) = \big\{\big((m, x), (k, y)\big)\in S \times S\mid\max(m, k) = n\big\}.$$ There are exactly $2n + 1$ ordered pairs $(m, k)\in\mathbb{N} \times \mathbb{N}$ satisfying $\max(m, k) = n$. Thus, we get $$|\mathrm{Dec}(n, x)| = (2n + 1)|L|.$$ Let $X = \mathcal{P}(S)$. By Theorem \ref{spectrality_is_equivalent_to_finite_decomposition_property}, the induced multiplication $\star: X \times X \to X$ is spectral if and only if $L$ is finite. 
\end{example}
For any subsets $A, B \subseteq S$ and an element $s \in S$, define sets $$\mathrm{LFac}_B(s) := \{a \in S \mid \exists b\in B\text{ such that } ab = s\}$$ and $$\mathrm{RFac}_A(s) := \{b \in S \mid \exists a\in A\text{ such that } ab = s\}.$$
The next proposition shows that spectrality of right and left translations can also be characterized by a condition analogous to that of Theorem \ref{spectrality_is_equivalent_to_finite_decomposition_property}. Since the proof follows a similar pattern, we omit it. 
\begin{prop}\label{spectrality_of_translations_are_equivalent_to_finitness_of_proper_factorizations_sets}
    Let $S$ be a semilattice-ordered semigroup and let $X = \mathcal{P}(S)$. 
    \begin{enumerate}[label=(\arabic*)]
        \item For a subset $B \subseteq S$, the right translation $R_B: X \to X$ is spectral if and only if $\mathrm{LFac}_B(s)$ is finite for every $s\in S$. 
        \item For a subset $A \subseteq S$, the left translation $L_A: X \to X$ is spectral if and only if $\mathrm{RFac}_A(s)$ is finite for every $s \in S$. 
    \end{enumerate}
\end{prop}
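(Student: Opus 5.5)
We prove only part (1); part (2) follows by the symmetric argument, interchanging the roles of the two factors. Throughout, $X=\mathcal{P}(S)$ and $\mathrm{cl}$ is the identity, so $X_\mathrm{fin}=\mathcal{P}_\mathrm{fin}(S)$ and $R_B(A)=AB$. The plan mirrors the proof of Theorem \ref{spectrality_is_equivalent_to_finite_decomposition_property}, with Proposition \ref{criterion_for_separate_spectrality} and the patch topology on $X$ playing the roles that the joint criterion and $X\times X$ played there. The key identity, valid for every $s\in S$, is
\begin{equation*}
R_B^{-1}\big(V_X(s)\big)=\{A\in X\mid s\in AB\}=\bigcup_{a\in\mathrm{LFac}_B(s)}V_X(a).
\end{equation*}
It holds because $s\in AB$ exactly when some $a\in A$ and $b\in B$ satisfy $ab=s$, that is, when $A$ meets $\mathrm{LFac}_B(s)$.

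\emph{Sufficiency.} Assume $\mathrm{LFac}_B(s)$ is finite for every $s$. Taking complements in the identity gives
\begin{equation*}
R_B^{-1}\big(D_X(s)\big)=\bigcap_{a\in\mathrm{LFac}_B(s)}D_X(a).
\end{equation*}
This is a finite intersection of quasi-compact open sets, hence quasi-compact open since $X$ is spectral. If $\mathrm{LFac}_B(s)$ is empty, the intersection is all of $X$, which is also fine. For a finite $F\subseteq S$ we have $D_X(F)=\bigcup_{s\in F}D_X(s)$, so $R_B^{-1}(D_X(F))$ is a finite union of quasi-compact open sets, and therefore quasi-compact open. Finally, the sets $D_X(F)$ with $F$ finite generate the quasi-compact opens of $X$ under finite intersections and finite unions, and preimages commute with these operations. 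Hence every quasi-compact open set pulls back to a quasi-compact open set, and $R_B$ is spectral.

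\emph{Necessity.} Assume $R_B$ is spectral and fix $s\in S$. Then $R_B^{-1}(D_X(s))$ is quasi-compact open, hence clopen in $X^\mathrm{cons}$. Its complement $R_B^{-1}(V_X(s))$ is therefore compact in $X^\mathrm{cons}$ by Theorem \ref{spectral_space_with_patch_top_is_compact}. By the identity, this complement is covered by the sets $V_X(a)$ with $a\in\mathrm{LFac}_B(s)$, and each of these is open in $X^\mathrm{cons}$. Compactness then yields finitely many $a_1,\dots,a_n$ such that $R_B^{-1}(V_X(s))=\bigcup_i V_X(a_i)$. Now take any $a\in\mathrm{LFac}_B(s)$. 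Then $\{a\}B\ni s$, so $\{a\}\in V_X(a_i)$ for some $i$, which forces $a=a_i$. Thus $\mathrm{LFac}_B(s)$ is finite. Alternatively, one could invoke Proposition \ref{criterion_for_separate_spectrality} with $F=\{s\}$ and apply the same singleton trick to the nonempty sets $G_i$.

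\emph{Main obstacle.} The only point needing care is the necessity direction: one must test the finite description on singleton sets $A=\{a\}$ to pin down each factor. The degenerate case where some $G_i$ is empty does not arise in the patch-compactness version, because every set in the cover $\{V_X(a)\}$ carries a point $a$, so the extracted finite subcover consists of sets $V_X(a_i)$ with specific points $a_i$.
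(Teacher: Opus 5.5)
Your proof is correct and follows the route the paper intends. The paper omits this proof as following the pattern of Theorem \ref{spectrality_is_equivalent_to_finite_decomposition_property}, and you adapt that argument faithfully: the identity $R_B^{-1}(V_X(s))=\bigcup_{a\in\mathrm{LFac}_B(s)}V_X(a)$ replaces the decomposition over $\mathrm{Dec}(s)$, patch compactness gives necessity, and testing on singletons $\{a\}$ pins down each element of $\mathrm{LFac}_B(s)$.
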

\begin{corollary}\label{spectrality_of_star_is_equivalent_to_spectrality_of_all_left_and_right_translations}
    Let $S$ be a semilattice-ordered semigroup and let $X = \mathcal{P}(S)$. Then the following conditions are equivalent: 
    \begin{enumerate}[label=(\arabic*)]
        \item the map $\star: X \times X \to X$ is spectral,
        \item for all $A, B \in X$, the translations $L_A, R_B: X \to X$ are spectral, 
        \item the right translation $R_S: X \to X$ and the left translation $L_S: X \to X$ are spectral. 
    \end{enumerate}
\end{corollary}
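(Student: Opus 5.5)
We prove the cycle of implications $(1)\Rightarrow(2)\Rightarrow(3)\Rightarrow(1)$, relying on Theorem \ref{spectrality_is_equivalent_to_finite_decomposition_property} and Proposition \ref{spectrality_of_translations_are_equivalent_to_finitness_of_proper_factorizations_sets}.

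\textbf{$(1)\Rightarrow(2)$ and $(2)\Rightarrow(3)$.} Since the identity closure operator is algebraic, the first implication is an instance of Corollary \ref{spectrality_of_star_implies_spectrality_of_all_translations}. The second is trivial: take $A = B = S \in X$.

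\textbf{$(3)\Rightarrow(1)$.} Assume $R_S$ and $L_S$ are spectral. By Proposition \ref{spectrality_of_translations_are_equivalent_to_finitness_of_proper_factorizations_sets}, for every $s \in S$ the sets $\mathrm{LFac}_S(s)$ and $\mathrm{RFac}_S(s)$ are finite. Unwinding the definitions, $\mathrm{LFac}_S(s)$ is the set of first coordinates of pairs in $\mathrm{Dec}(s)$, and $\mathrm{RFac}_S(s)$ is the set of second coordinates. Hence
\begin{equation*}
    \mathrm{Dec}(s) \subseteq \mathrm{LFac}_S(s) \times \mathrm{RFac}_S(s),
\end{equation*}
which is a finite set. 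So $S$ has the finite decomposition property, and Theorem \ref{spectrality_is_equivalent_to_finite_decomposition_property} shows that $\star$ is spectral.

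The only real content is the projection inclusion in the last step. It is immediate once Proposition \ref{spectrality_of_translations_are_equivalent_to_finitness_of_proper_factorizations_sets} is available, so no serious obstacle is expected.
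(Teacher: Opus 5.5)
Your proposal is correct and follows essentially the same route as the paper: $(1)\Rightarrow(2)$ via Corollary \ref{spectrality_of_star_implies_spectrality_of_all_translations}, with $(2)\Rightarrow(3)$ immediate. For $(3)\Rightarrow(1)$, you use Proposition \ref{spectrality_of_translations_are_equivalent_to_finitness_of_proper_factorizations_sets} together with the inclusion $\mathrm{Dec}(s)\subseteq \mathrm{LFac}_S(s)\times\mathrm{RFac}_S(s)$ and then Theorem \ref{spectrality_is_equivalent_to_finite_decomposition_property}.
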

\begin{proof}
    The implication from $(1)$ to $(2)$ follows from Corollary \ref{spectrality_of_star_implies_spectrality_of_all_translations} and the implication from $(2)$ to $(3)$ is immediate. Assume that the translations $R_S, L_S: X\to X$ are spectral. Fix an element $s \in S$. By Proposition \ref{spectrality_of_translations_are_equivalent_to_finitness_of_proper_factorizations_sets}, the sets $\mathrm{LFac}_S(s)$ and $\mathrm{RFac}_S(s)$ are finite. Since we have $$\mathrm{Dec}(s) \subseteq \mathrm{LFac}_S(s)\times\mathrm{RFac}_S(s),$$ the set $\mathrm{Dec}(s)$ is also finite. Then by Theorem \ref{spectrality_is_equivalent_to_finite_decomposition_property}, the map $\star: X \times X \to X$ is spectral. 
\end{proof}
\begin{corollary}
    Let $S$ be a semilattice-ordered semigroup and let $X = \mathcal{P}(S)$. 
    \begin{enumerate}[label=(\arabic*)]
        \item If $S$ is commutative, then $\star: X \times X \to X$ is spectral if and only if  $R_S: X \to X$ is spectral.
        \item If $S$ is left cancellative, then $\star: X \times X \to X$ is spectral if and only if $R_S: X \to X$ is spectral.
        \item If $S$ is right cancellative, then $\star: X \times X \to X$ is spectral if and only if $L_S: X \to X$ is spectral. 
    \end{enumerate}
\end{corollary}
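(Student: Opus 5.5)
In each case we reduce to Corollary \ref{spectrality_of_star_is_equivalent_to_spectrality_of_all_left_and_right_translations}. That corollary says $\star$ is spectral if and only if both $R_S$ and $L_S$ are spectral. By Proposition \ref{spectrality_of_translations_are_equivalent_to_finitness_of_proper_factorizations_sets}, this amounts to finiteness of $\mathrm{LFac}_S(s)$ and $\mathrm{RFac}_S(s)$ for every $s\in S$. In every part, the forward implication is immediate from Corollary \ref{spectrality_of_star_implies_spectrality_of_all_translations}, since $R_S$ and $L_S$ are particular translations. So only the converse needs work: from finiteness of one factorization set we must recover finiteness of $\mathrm{Dec}(s)$, or equivalently of the other factorization set.

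For (1), suppose $S$ is commutative. If $ab=s$, then also $ba=s$, so $a$ lies in both $\mathrm{LFac}_S(s)$ and $\mathrm{RFac}_S(s)$, and the same holds for $b$. Hence $\mathrm{LFac}_S(s)=\mathrm{RFac}_S(s)$. Alternatively, $L_S=R_S$ directly, since $SB=BS$ for every $B$. Spectrality of $R_S$ therefore gives spectrality of $L_S$, and Corollary \ref{spectrality_of_star_is_equivalent_to_spectrality_of_all_left_and_right_translations} finishes the argument.

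For (2), suppose $S$ is left cancellative and $R_S$ is spectral, so $\mathrm{LFac}_S(s)$ is finite. We would show that $\mathrm{Dec}(s)$ is finite directly and then apply Theorem \ref{spectrality_is_equivalent_to_finite_decomposition_property}. Consider the projection $\mathrm{Dec}(s)\to\mathrm{LFac}_S(s)$, $(a,b)\mapsto a$. This map is injective: if $ab=s=ab'$, then left cancellativity gives $b=b'$. So $|\mathrm{Dec}(s)|\leqslant|\mathrm{LFac}_S(s)|<\infty$. Part (3) follows symmetrically, using the projection $(a,b)\mapsto b$ into $\mathrm{RFac}_S(s)$ together with right cancellation.

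The only point requiring care is checking that the correct factorization set appears for each translation. We have $R_S(A)=AS$, so $s\in AS$ if and only if $A$ meets $\mathrm{LFac}_S(s)$. This is why left cancellation, which fixes the right factor once the left one is known, pairs with $R_S$. Once this matching is right, the remaining steps are routine.
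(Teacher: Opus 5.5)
Your proposal is correct and follows essentially the same route as the paper: part (1) via $L_S = R_S$ and the preceding corollary, and parts (2) and (3) by noting that cancellation makes the projection from $\mathrm{Dec}(s)$ to $\mathrm{LFac}_S(s)$ (respectively $\mathrm{RFac}_S(s)$) injective, then invoking the factorization-set criterion for translations and the finite decomposition theorem. The only difference is that the paper states the equality $|\mathrm{Dec}(s)| = |\mathrm{LFac}_S(s)|$, whereas you use the inequality, which is all that is needed.
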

\begin{proof}
    $(1)$ In the commutative case we have $R_S = L_S$, so the conclusion follows from Corollary \ref{spectrality_of_star_is_equivalent_to_spectrality_of_all_left_and_right_translations}. \\
    $(2)$ Fix an element $s \in S$. In the left cancellative case, for each fixed $a \in S$ the equation $ab = s$ has at most one solution $b$. Thus, $|\mathrm{Dec}(s)| = |\mathrm{LFac}_S(s)|.$ Then the conclusion follows from Proposition \ref{spectrality_of_translations_are_equivalent_to_finitness_of_proper_factorizations_sets} and Theorem \ref{spectrality_is_equivalent_to_finite_decomposition_property}. \\
    $(3)$ In this case the proof is analogous to the proof of $(2)$. 
\end{proof}

\section{Closure operators on semilattice-ordered semigroups}\label{closures_on_semigroups}
In this section we discuss several closure operators naturally associated with semilattice-ordered semigroups. Our motivation is twofold. First, by Theorem \ref{construction_of_spectral_space}, every algebraic closure operator $\mathrm{cl}$ gives rise to a spectral space 
\begin{equation*}
    X = \{A \in \mathcal{P}(S) \mid A^\mathrm{cl} = A\},
\end{equation*}
endowed with the subspace topology induced by the hull-kernel topology on $\mathcal{P}(S)$. Thus, identifying natural algebraic closure operators provides concrete examples of the spectral spaces studied in Section \ref{spectral_spaces_arising_from_closures}. Second, for each such closure operator, the induced operation $A \star B = (AB)^\mathrm{cl}$ can be studied using the spectrality criteria developed in Section \ref{topological_semigroups_whose_underlying_space_is_spectral}.

\medskip \noindent 
\textbf{Some simple examples of closure operators on semilattice-ordered semigroups.} We begin by recalling a few classical closure operators from ordered algebra and universal algebra which will be used repeatedly below. 

For a subset $A\subseteq S$, the \emph{downward closure} 
\begin{equation*}
    A \mapsto \downarrow A := \{ x\in S \mid \exists a\in A \text{ such that } x \leqslant a\}
\end{equation*}
is the usual operator in a poset. If $A = \downarrow A$, then $A$ is called a \emph{downset}. The \emph{finite-join closure} 
\begin{equation*} 
    A\mapsto A^{\lor} := \{a_1 \lor \ldots \lor a_n \mid n \geqslant 1, a_i \in A\}
\end{equation*}
is standard in join-semilattice theory. Furthermore, 
\begin{equation*}
    A \mapsto \sqrt{A} := \{x \in S \mid \exists n\in\mathbb{N}_+ \text{ such that } x^n \in A\}
\end{equation*}
defines a closure operator on $S$ (see \cite[Section 1]{KollarSulka}). 
\begin{remark}
    In ring theory the same construction is most commonly used on the set of ideals of a ring. If $\mathcal{J}$ is an ideal of a ring $R$, then $$\sqrt{\mathcal{J}} := \{r \in R \mid \exists n\in\mathbb{N}_+ \text{ such that } r^n \in \mathcal{J}\}$$ is again an ideal called the \emph{radical} of $\mathcal{J}$ (see e.g. \cite[Section 1]{matsumura1980commutative}). 
\end{remark}
\begin{prop}\label{order_compatibility_is_equivalent_to_all_closed_subsets_being_down_sets}
    Let $S$ be a semilattice-ordered semigroup. A closure operator $\mathrm{cl}$ on $S$ is order-compatible if and only if every $\mathrm{cl}$-closed subset of $S$ is a downset.
\end{prop}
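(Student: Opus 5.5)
The proof is a direct two-way argument from the definitions, using only extension and order-preservation of $\mathrm{cl}$.

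\emph{($\Rightarrow$)} Assume $\mathrm{cl}$ is order-compatible. Let $A$ be $\mathrm{cl}$-closed, and take $y \in A$ and $x \leqslant y$. By order-compatibility, $\{x\}^\mathrm{cl} \subseteq \{y\}^\mathrm{cl}$. Since $\{y\} \subseteq A$, order-preservation gives $\{y\}^\mathrm{cl} \subseteq A^\mathrm{cl} = A$. Extension gives $x \in \{x\}^\mathrm{cl}$. Chaining these, $x \in A$, so $A$ is a downset.

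\emph{($\Leftarrow$)} Assume every $\mathrm{cl}$-closed subset is a downset, and let $x \leqslant y$. Idempotence shows that $\{y\}^\mathrm{cl}$ is $\mathrm{cl}$-closed, so by hypothesis it is a downset. It contains $y$ by extension, hence it also contains $x$. Thus $\{x\} \subseteq \{y\}^\mathrm{cl}$. Applying order-preservation and idempotence,
\[
\{x\}^\mathrm{cl} \subseteq \big(\{y\}^\mathrm{cl}\big)^\mathrm{cl} = \{y\}^\mathrm{cl},
\]
which is order-compatibility.

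There is no real obstacle. The only point needing care is the reverse direction, where one must apply the downset hypothesis to the specific closed set $\{y\}^\mathrm{cl}$ and then use idempotence to pass from $x \in \{y\}^\mathrm{cl}$ to $\{x\}^\mathrm{cl} \subseteq \{y\}^\mathrm{cl}$.
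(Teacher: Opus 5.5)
Your proposal is correct and follows essentially the same argument as the paper in both directions. In the reverse direction you likewise apply the downset hypothesis to the closed set $\{y\}^\mathrm{cl}$, then use order-preservation and idempotence to conclude $\{x\}^\mathrm{cl} \subseteq \{y\}^\mathrm{cl}$.
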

\begin{proof}
    Assume first that $\mathrm{cl}$ is order-compatible. Let $A$ be a $\mathrm{cl}$-closed subset of $S$. If $x \leqslant y$ and $y \in A$, then $$\{x\}^\mathrm{cl}\subseteq\{y\}^\mathrm{cl}\subseteq A^\mathrm{cl} = A.$$ Since $x\in\{x\}^\mathrm{cl}$, it follows that $x\in A$. Thus, $A$ is a downset. 

    Conversely, suppose that every $\mathrm{cl}$-closed subset of $S$ is a downset. Let $x, y\in S$ be such that $x \leqslant y$. Since $\{y\}^\mathrm{cl}$ is a $\mathrm{cl}$-closed subset of $S$ containing $y$, it is a downset and therefore $x \in \{y\}^\mathrm{cl}$. By order-preservation and idempotence of $\mathrm{cl}$, we get $$\{x\}^\mathrm{cl}\subseteq\big(\{y\}^\mathrm{cl}\big)^\mathrm{cl} = \{y\}^\mathrm{cl}.$$ Thus, the closure operator $\mathrm{cl}$ is order-compatible. 
\end{proof}
\subsection{Downward-join closure} 
We begin with the motivation for the construction considered below. In \cite[Theorem 3.7]{LESCOT20111782}, Lescot considered a $\mathbb{B}_1$-algebra $A$ and the smallest congruence $\mathcal{C}_\mathcal{J} \subseteq A \times A$, where $\mathcal{J}$ is an ideal of $A$, such that $$\forall x \in \mathcal{J} \hspace{3pt} (x, 0) \in \mathcal{C}_\mathcal{J}.$$ He proved that $$(x, y)\in\mathcal{C}_\mathcal{J} \iff \exists z \in \mathcal{J} \text{ such that } x + z = y + z,$$ and that the assignment $$\mathcal{J} \mapsto \mathcal{I}(\mathcal{C}_\mathcal{J}) := \{x \in A \mid (x, 0)\in\mathcal{C}_\mathcal{J}\} = \{x\in A \mid \exists z\in \mathcal{J} \text{ such that } x + z = z\}$$ defines a closure operator on the set of ideals of $A$. Recently, Jun, Ray and Tolliver recalled the same construction in the setting of additively idempotent semirings, introducing the $k$-closure operator \cite[Section 5]{JunRayTolliver}. 

In the present setting there is no distinguished additive identity and the join operation $\lor$ plays the role of the idempotent addition. Moreover, we adapt this construction to arbitrary subsets. Let $S$ be a semilattice-ordered semigroup and let $A$ be a subset of $S$. If $A = \varnothing$, let $\mathcal{C}_A$ be the equality relation on $S$. If $A \neq \varnothing$, we define a relation $\mathcal{C}_A \subseteq S \times S$ in the following way: $$(x, y)\in\mathcal{C}_A \iff \exists a \in A^\lor \text{ such that } x \lor a = y \lor a.$$ The relation $\mathcal{C}_A$ is a \emph{join-semilattice congruence} on $S$ (i.e. a congruence on the join-semilattice reduct $(S, \lor)$). The assignment 
\begin{align*} 
    A \mapsto A^\mathrm{dj} &:= \{x \in S \mid \exists a \in A^\lor \text{ such that } (x, a)\in\mathcal{C}_A\} \\ &\phantom{:}= \{x \in S \mid \exists a \in A^\lor \text{ such that } x \lor a = a\}
\end{align*}
defines a closure operator on $S$ (called \emph{downward-join closure}). 
\begin{prop}\label{equivalent_form_of_downward_join_closure}
Let $S$ be a semilattice-ordered semigroup. Then for every subset $A \subseteq S$,  
\begin{equation*} 
    A^\mathrm{dj} = \downarrow(A^\lor) = \{x\in S\mid \exists a_1, \ldots, a_n \in A \text{ such that } x \leqslant a_1 \lor \ldots \lor a_n\}. 
\end{equation*}
\end{prop}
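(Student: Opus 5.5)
The plan is to unwind the definitions directly. The key fact is the equivalence $x \lor a = a \iff x \leqslant a$, which is just the definition of the order on $S$. Once that is in hand, both equalities in the statement are reformulations of the defining formula for $A^\mathrm{dj}$.

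First I dispose of the case $A = \varnothing$. Here $A^\lor = \varnothing$, so $\downarrow(A^\lor) = \varnothing$. The right-hand set is also empty, since with $n \geqslant 1$ there are no $a_1, \ldots, a_n \in \varnothing$; this is the reading of the statement I adopt. On the other side, the defining formula $A^\mathrm{dj} = \{x \in S \mid \exists a \in A^\lor \text{ such that } x \lor a = a\}$ gives $\varnothing^\mathrm{dj} = \varnothing$. The special choice of $\mathcal{C}_\varnothing$ as the equality relation does not affect this, because the existential quantifier ranges over $A^\lor = \varnothing$. So all three sets are empty.

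Next let $A \neq \varnothing$. By the second expression in the definition of $\mathrm{dj}$, $x \in A^\mathrm{dj}$ if and only if there is $a \in A^\lor$ with $x \lor a = a$. By the definition of the partial order this means $x \leqslant a$, so the condition is exactly $x \in \downarrow(A^\lor)$. Finally, $A^\lor$ consists precisely of the elements $a_1 \lor \ldots \lor a_n$ with $n \geqslant 1$ and $a_i \in A$. Hence $x \in \downarrow(A^\lor)$ if and only if $x \leqslant a_1 \lor \ldots \lor a_n$ for some such $a_i$, which is the last expression in the statement.

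If one instead insists on starting from the congruence form $\{x \mid \exists a\in A^\lor,\ (x,a)\in\mathcal{C}_A\}$, one extra step is needed. Suppose $(x, a) \in \mathcal{C}_A$, so that $x \lor b = a \lor b$ for some $b \in A^\lor$. Then $x \leqslant x \lor b = a \lor b$, and $a \lor b \in A^\lor$, so $x \in \downarrow(A^\lor)$. Conversely, if $x \leqslant a$ with $a \in A^\lor$, take $b = a$: then $x \lor a = a = a \lor a$, so $(x, a) \in \mathcal{C}_A$. There is no real obstacle in any of this; the only point needing care is the empty-set convention.
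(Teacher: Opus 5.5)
Your proposal is correct and follows the paper's route: the paper's proof is the chain $x \in A^\mathrm{dj} \iff \exists a \in A^\lor\colon x \lor a = a \iff \exists a \in A^\lor\colon x \leqslant a \iff x \in \downarrow(A^\lor)$. Your separate treatment of $A = \varnothing$ and your check that the congruence form of the definition agrees with the join form (via the witness $a \lor b \in A^\lor$) are both valid, and they make explicit what the paper takes for granted in the definition.
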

\begin{proof}
    Let $x \in S$ and $A \subseteq S$. Then 
    \begin{align*}
        x \in A^\mathrm{dj} &\iff x \in \{y \in S \mid \exists a \in A^\lor \text{ such that } y \lor a = a\} \\ &\iff \exists a\in A^\lor \text{ such that } x \lor a = a \\ & \iff \exists a \in A^\lor \text{ such that } x \leqslant a \\ &\iff x \in \downarrow(A^\lor). \qedhere 
    \end{align*}
\end{proof}
\noindent It follows that the downward-join closure is described only in terms of the order and the join structure. In fact, $A^\mathrm{dj}$ is the least subset of $S$ containing $A$ that is both a downset and closed under finite joins. 
\begin{prop}\label{downward_join_closure_is_algebraic}
    Let $S$ be a semilattice-ordered semigroup. The closure operator $(-)^\mathrm{dj} : \mathcal{P}(S) \rightarrow\mathcal{P}(S)$ is algebraic and multiplicative.
\end{prop}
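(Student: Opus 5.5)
Both properties will be checked directly from the description $A^\mathrm{dj} = \downarrow(A^\lor)$ of Proposition \ref{equivalent_form_of_downward_join_closure}. For algebraicity, the inclusion $\bigcup\{F^\mathrm{dj} \mid F\in\mathcal{P}_\mathrm{fin}(A)\}\subseteq A^\mathrm{dj}$ follows from order-preservation. Conversely, if $x\in A^\mathrm{dj}$, then $x\leqslant a_1\lor\ldots\lor a_n$ for some $a_i\in A$. Taking $F=\{a_1,\ldots,a_n\}$ gives $x\in F^\mathrm{dj}$, and the case $A=\varnothing$ is trivial because $\varnothing^\lor=\varnothing$.

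For multiplicativity, the inclusion $(AB)^\mathrm{dj}\subseteq(A^\mathrm{dj}B^\mathrm{dj})^\mathrm{dj}$ always holds, as the paper notes. For the reverse inclusion, the plan is to show $A^\mathrm{dj}B^\mathrm{dj}\subseteq(AB)^\mathrm{dj}$. Applying the closure and using idempotence then yields $(A^\mathrm{dj}B^\mathrm{dj})^\mathrm{dj}\subseteq (AB)^\mathrm{dj}$.

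To prove $A^\mathrm{dj}B^\mathrm{dj}\subseteq(AB)^\mathrm{dj}$, take $x\leqslant a_1\lor\ldots\lor a_n$ and $y\leqslant b_1\lor\ldots\lor b_m$ with $a_i\in A$ and $b_j\in B$. Monotonicity of multiplication (stated in the background) gives
\[
xy\leqslant (a_1\lor\ldots\lor a_n)(b_1\lor\ldots\lor b_m).
\]
By the two distributive laws, applied inductively, the right-hand side equals $\bigvee_{i,j}a_ib_j$. This is a finite join of elements of $AB$, so $xy\in\downarrow((AB)^\lor)=(AB)^\mathrm{dj}$. If $A$ or $B$ is empty, then both sides of the multiplicativity identity are empty.

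The only slightly delicate step is the expansion of the product of two finite joins into the join of all pairwise products. It needs distributivity on both sides together with an induction on $n$ and $m$. Everything else in the argument is routine.
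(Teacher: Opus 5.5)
Your proof is correct and follows essentially the same argument as the paper: algebraicity is read off from $A^\mathrm{dj} = \downarrow(A^\lor)$, and multiplicativity comes from bounding each product by a join $\bigvee_{i,j} a_ib_j \in (AB)^\lor$ using monotonicity and two-sided distributivity. The only cosmetic difference is that you prove $A^\mathrm{dj}B^\mathrm{dj}\subseteq (AB)^\mathrm{dj}$ and then invoke idempotence. The paper instead unfolds the outer closure element-wise and uses directly that $(AB)^\mathrm{dj}$ is a downset closed under finite joins.
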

\begin{proof}
    Algebraicity is immediate from Proposition \ref{equivalent_form_of_downward_join_closure}. Indeed, if $x \in A^\mathrm{dj}$, then $$x \leqslant a_1 \lor \ldots \lor a_n$$ for some $a_1, \ldots, a_n \in A$, and therefore $x \in \{a_1, \ldots, a_n\}^\mathrm{dj}.$ Hence $$A^\mathrm{dj} = \bigcup\{F^\mathrm{dj} \mid F \in \mathcal{P}_\mathrm{fin}(A)\}.$$
    
    We now prove multiplicativity. Let $A, B \subseteq S$. It is enough to show that $$(A^\mathrm{dj}B^\mathrm{dj})^\mathrm{dj} \subseteq (AB)^\mathrm{dj}.$$ Let $x \in (A^\mathrm{dj}B^\mathrm{dj})^\mathrm{dj}$. Then $$x \leqslant a_1'b_1' \lor \ldots \lor a_n'b_n'$$ for some $a_1', \ldots, a_n' \in A^\mathrm{dj}$ and $b_1', \ldots, b_n' \in B^\mathrm{dj}$. For every $i \in \{1, \ldots, n\}$ there exist $a_{i1}, \ldots, a_{im_i} \in A$, $b_{i1}, \ldots, b_{ik_i} \in B$ such that $$a_i' \leqslant a_{i1} \lor \ldots \lor a_{im_i}, \qquad b_i' \leqslant b_{i1} \lor \ldots \lor b_{ik_i}$$ and hence $$a_i'b_i' \leqslant (a_{i1} \lor \ldots \lor a_{im_i})(b_{i1} \lor \ldots \lor b_{ik_i}) = \bigvee_{\substack{1 \leqslant p \leqslant m_i \\1 \leqslant q \leqslant k_i}}a_{ip}b_{iq}.$$ Each product $a_{ip}b_{iq} \in AB$, so the right-hand side belongs to $(AB)^\lor$. Therefore, $a_i'b_i' \in (AB)^\mathrm{dj}$ for every $i\in\{1, \ldots, n\}$ and in consequence $$a_1'b_1' \lor \ldots \lor a_n'b_n' \in (AB)^\mathrm{dj}.$$ Since $(AB)^\mathrm{dj}$ is a downset, we conclude that $x \in (AB)^\mathrm{dj}$. Thus, $(A^\mathrm{dj}B^\mathrm{dj})^\mathrm{dj} \subseteq (AB)^\mathrm{dj}$, as required.  
\end{proof}

\subsection{Join-radical closure}
The next closure is inspired by the use of finite joins of successive powers in the study of semilattice-ordered semigroups. This point of view already appears in the work of Calude on topological properties of semilattice-ordered semigroups \cite{calude1976_1}. 

Following \cite[Section 1]{calude1976_1}, let $S$ be a semilattice-ordered semigroup and for $x\in S$ we set $x^1 := x$, $x^{n + 1} := x^n \cdot x$ and $x^{(n)} := x \lor x^2 \lor \ldots \lor x^n$. Calude defined a closure operator by the assignment $$A \mapsto \{x^{(n)} \mid x \in A,\hspace{3pt} n\in\mathbb{N}_+\}.$$ In the present paper we consider a different closure operator based on the same family of elements $x^{(n)}$. Before introducing this closure, we establish elementary properties of elements $x^{(n)}$. 

\begin{prop}\label{power_proposition}
    Let $S$ be a semilattice-ordered semigroup and let $x,y \in S$ be such that $x \leqslant y$. Then for every $n \in \mathbb{N}_+$, $x^{(n)} \leqslant y^{(n)}$. 
\end{prop}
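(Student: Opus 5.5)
The statement follows from two facts about semilattice-ordered semigroups. The first is that multiplication is monotone in both arguments, as noted in the background section: if $x \leqslant y$ and $u \leqslant v$, then $xu \leqslant yv$. The second is that joins are monotone: $x \leqslant y$ and $u \leqslant v$ imply $x \lor u \leqslant y \lor v$. Both were recorded right after the definition of the order $\leqslant$.

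First I would prove by induction on $k \in \mathbb{N}_+$ that $x \leqslant y$ implies $x^k \leqslant y^k$. The case $k=1$ is the hypothesis. For the inductive step, assume $x^k \leqslant y^k$. Using $x \leqslant y$ and monotonicity of multiplication in both variables, we get
\[
x^{k+1} = x^k x \leqslant y^k y = y^{k+1}.
\]

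Then I would prove by induction on $n$ that $x^{(n)} \leqslant y^{(n)}$. For $n = 1$ we have $x^{(1)} = x \leqslant y = y^{(1)}$. For the inductive step, note that $x^{(n+1)} = x^{(n)} \lor x^{n+1}$, which holds by associativity of $\lor$, and similarly for $y$. Combining the inductive hypothesis $x^{(n)} \leqslant y^{(n)}$ with the first step $x^{n+1} \leqslant y^{n+1}$ and monotonicity of joins gives $x^{(n+1)} \leqslant y^{(n+1)}$. Alternatively, one can skip the second induction and apply monotonicity of finite joins directly to the termwise inequalities $x^k \leqslant y^k$ for $k = 1, \ldots, n$.

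There is no real obstacle. The only points to state explicitly are the monotonicity of multiplication and joins, and the identity $x^{(n+1)} = x^{(n)} \lor x^{n+1}$, which follows from the definitions.
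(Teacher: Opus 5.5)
Your proof is correct and matches the paper's. The paper also proves $x^k \leqslant y^k$ by induction using monotonicity of multiplication, then applies monotonicity of joins termwise to get $x^{(n)} \leqslant y^{(n)}$. That termwise step is exactly your ``alternative''; your second induction on $n$ is an equivalent minor variant.
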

\begin{proof}
    We first show by induction on $n \geqslant 1$ that $x^n \leqslant y^n$. For $n = 1$, we have $x \leqslant y$ and it is clearly true. Now suppose that $x^k \leqslant y^k$ for some $k \geqslant 1$. Then 
    \begin{equation*}
        x^{k + 1} = x^kx \leqslant y^ky = y^{k + 1},
    \end{equation*}
    which establishes the induction step. Hence $x^n \leqslant y^n$ for all $n \geqslant 1$.
    
    Now from $x^i \leqslant y^i$ for each $i = 1,2,\ldots, n$ we get 
    \begin{equation*}
        x^{(n)} = x^1 \lor x^2 \lor \ldots \lor x^n \leqslant y^1 \lor y^2 \lor \ldots \lor y^n = y^{(n)}. \qedhere
    \end{equation*}
\end{proof}

\begin{prop}\label{product_of_powers}
    Let $S$ be a semilattice-ordered semigroup and $x\in S$. Then for every $n, m \in \mathbb{N}_+$, $(x^{(n)})^{(m)} = x^{(nm)}$. 
\end{prop}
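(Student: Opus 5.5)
The plan is to expand $(x^{(n)})^{(m)}$ using the distributive laws and then absorb repeated powers via idempotence of $\lor$. By definition,
\[
(x^{(n)})^{(m)} = x^{(n)} \lor (x^{(n)})^2 \lor \ldots \lor (x^{(n)})^m ,
\]
so the first step is a formula for the ordinary powers $(x^{(n)})^k$.

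I will prove by induction on $k \geqslant 1$ that
\[
(x^{(n)})^k = \bigvee_{i=k}^{kn} x^i = x^k \lor x^{k+1} \lor \ldots \lor x^{kn}.
\]
For $k=1$ this is the definition of $x^{(n)}$. For the induction step, write $(x^{(n)})^{k+1} = (x^{(n)})^k \cdot x^{(n)}$. Both distributive laws, iterated over finite joins (a routine induction on the number of joinands), give
\[
\Big(\bigvee_{i=k}^{kn} x^i\Big)\Big(\bigvee_{j=1}^{n} x^j\Big) = \bigvee_{i,j} x^{i+j}.
\]
The exponents $i+j$ with $k\le i\le kn$ and $1\le j\le n$ range over exactly the integers from $k+1$ to $(k+1)n$, with no gaps, since consecutive integers are hit. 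By commutativity, associativity and idempotence of $\lor$, repeated terms collapse, which yields $\bigvee_{i=k+1}^{(k+1)n} x^i$.

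Then I take the join over $k=1,\ldots,m$:
\[
(x^{(n)})^{(m)} = \bigvee_{k=1}^{m}\bigvee_{i=k}^{kn} x^i .
\]
The union of the exponent intervals $[k,kn]$ for $1\le k\le m$ is exactly $\{1,\ldots,mn\}$. Indeed, it contains $[1,n]$ from $k=1$ and $mn$ from $k=m$, it lies inside $[1,mn]$, and it covers every exponent in $[1,mn]$, since $[m,mn]$ together with $[1,n]\cup\ldots$ leaves no gap. More simply, any $i\le mn$ lies in $[k,kn]$ for $k=\lceil i/n\rceil\le \min(i,m)$. Idempotence again gives $\bigvee_{i=1}^{mn}x^i = x^{(mn)}$.

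The main obstacle is bookkeeping: confirming that the exponent sets have no gaps (the $\lceil i/n\rceil$ argument) and justifying the generalized distributivity over finite joins. Neither is deep.
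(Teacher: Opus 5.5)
Your proposal is correct and follows the paper's proof. The paper likewise establishes $(x^{(n)})^k=\bigvee_{i=k}^{kn}x^i$, though it expands the $k$-fold product in one step by distributivity rather than by induction on $k$, and then collapses $\bigvee_{k=1}^{m}\bigvee_{i=k}^{kn}x^i$ to $x^{(nm)}$; your $\lceil i/n\rceil$ argument just makes explicit the interval-covering step that the paper leaves implicit.
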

\begin{proof}
    In a semilattice-ordered semigroup multiplication distributes over the join operation $\lor$ and therefore for every $k \geqslant 1$,
    \begin{equation*}
        (x^{(n)})^k = (x \lor x^2 \lor \ldots \lor x^n)^k = \displaystyle\bigvee_{1 \leqslant i_1, \ldots, i_k \leqslant n} x^{i_1}\ldots x^{i_k} = \displaystyle\bigvee_{i = k}^{kn} x^i. 
    \end{equation*}
    We then have 
    \begin{equation*}
        (x^{(n)})^{(m)} = \displaystyle\bigvee_{k = 1}^m (x^{(n)})^k = \bigvee_{k = 1}^m\bigvee_{i = k}^{kn} x^i = \bigvee_{i = 1}^{nm} x^i = x^{(nm)}. \qedhere 
    \end{equation*}
\end{proof}
\noindent The assignment 
\begin{equation*}
    A \mapsto \sqrt[\lor]{A} := \{x \in S \mid \exists n\in\mathbb{N}_+ \text{ such that } x^{(n)} \in A\}
\end{equation*}
defines a closure operator on $S$ (called \emph{join-radical closure}). Indeed, extension and order-preservation of this assignment are immediate, whereas idempotence follows from Proposition \ref{product_of_powers}. 
\begin{lemma}\label{join_radical_doesnt_change_downsets}
    Let $S$ be a semilattice-ordered semigroup. If $A\subseteq S$ is a downset, then $\sqrt[\lor]{A} = A$. 
\end{lemma}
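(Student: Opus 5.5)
To show $\sqrt[\lor]{A} = A$ for a downset $A$, I will prove the two inclusions separately.

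The inclusion $A \subseteq \sqrt[\lor]{A}$ is just the extension property of the join-radical closure: take $n = 1$, since $x^{(1)} = x$.

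For the reverse inclusion, take $x \in \sqrt[\lor]{A}$ and fix $n \in \mathbb{N}_+$ with $x^{(n)} \in A$. The key observation is that $x \leqslant x^{(n)}$. This holds because
\begin{equation*}
    x \lor x^{(n)} = x \lor (x \lor x^2 \lor \ldots \lor x^n) = x^{(n)},
\end{equation*}
using associativity and idempotence of $\lor$ in the semilattice $(S, \lor)$, together with the definition $x \leqslant y \iff x \lor y = y$. Since $A$ is a downset and $x^{(n)} \in A$, it follows that $x \in A$. Hence $\sqrt[\lor]{A} \subseteq A$.

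There is no real obstacle in this argument. The only point to check is that $x$ lies below the finite join $x^{(n)}$, and this is immediate because $x$ is one of the joinands. Neither Proposition \ref{power_proposition} nor Proposition \ref{product_of_powers} is needed.
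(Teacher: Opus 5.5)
Your proposal is correct and follows essentially the same argument as the paper: the trivial inclusion via extension ($n=1$), and the reverse inclusion from $x \leqslant x^{(n)} \in A$ together with $A$ being a downset. Your explicit check that $x \lor x^{(n)} = x^{(n)}$ simply spells out a step the paper states without proof.
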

\begin{proof}
    Clearly $A \subseteq \sqrt[\lor]{A}$. If $x \in \sqrt[\lor]{A}$, then $x^{(n)} \in A$ for some $n \in \mathbb{N}_+$. Since $A$ is a downset and $x \leqslant x^{(n)}$, it follows that $x \in A$. Thus, $\sqrt[\lor]{A} \subseteq A$.  
\end{proof}

\begin{prop}\label{join_radical_closure_is_algebraic}
    Let $S$ be a semilattice-ordered semigroup. The closure operator $\sqrt[\lor]{-} : \mathcal{P}(S) \rightarrow \mathcal{P}(S)$ is algebraic. 
\end{prop}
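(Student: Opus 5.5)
The proof will be a direct verification from the definition. The join-radical closure of any set $A$ is determined elementwise: membership of $x$ in $\sqrt[\lor]{A}$ depends only on whether some single element $x^{(n)}$ lies in $A$. So a finite witness is immediately available.

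First, the inclusion $\bigcup\{\sqrt[\lor]{F} \mid F\in\mathcal{P}_\mathrm{fin}(A)\}\subseteq\sqrt[\lor]{A}$ holds for every closure operator. It follows from order-preservation, since $F\subseteq A$ gives $\sqrt[\lor]{F}\subseteq\sqrt[\lor]{A}$.

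For the reverse inclusion, fix $x\in\sqrt[\lor]{A}$. By definition there is $n\in\mathbb{N}_+$ with $x^{(n)}\in A$. Put $F:=\{x^{(n)}\}$, which is a finite (singleton) subset of $A$. Since $x^{(n)}\in F$, the same $n$ witnesses that $x\in\sqrt[\lor]{F}$. Hence $x$ lies in the union over finite subsets, and the two sets are equal. This is exactly algebraicity as defined in the background section.

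There is no real obstacle. The only subtlety is to check that we are using the definition of $\sqrt[\lor]{-}$ directly and not the iterated closure. Idempotence, already justified via Proposition~\ref{product_of_powers}, is not needed here.

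The case $A=\varnothing$ is trivial. Both sides are empty, since $\sqrt[\lor]{\varnothing}=\varnothing$ and the only finite subset of $\varnothing$ is $\varnothing$ itself.
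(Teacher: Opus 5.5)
Your proof is correct and is essentially the paper's argument: the singleton $\{x^{(n)}\}\subseteq A$ witnesses $x\in\sqrt[\lor]{\{x^{(n)}\}}$, and the reverse inclusion follows from order-preservation. The separate treatment of $A=\varnothing$ is unnecessary, since the general argument covers it vacuously, but it does no harm.
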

\begin{proof}
    Let $A$ be a subset of $S$. If $x \in \sqrt[\lor]{A}$, then $x^{(n)} \in A$ for some $n \in \mathbb{N}_+$. Since $x \in \sqrt[\lor]{\{x^{(n)}\}}$ and $\{x^{(n)}\} \in \mathcal{P}_\mathrm{fin}(A)$, we have 
    \begin{equation*}
        \sqrt[\lor]{A} \subseteq \bigcup \big\{\sqrt[\lor]{F} \mid F \in \mathcal{P}_\mathrm{fin}(A)\big\}.  
    \end{equation*}
    The reverse inclusion follows from order-preservation of $\sqrt[\lor]{-}$. 
\end{proof}
\begin{example}\label{join_radical_closure_is_not_multiplicative}
    Let $S := \mathbb{N}_+$ with multiplication given by the ordinary product of natural numbers and join operation defined as follows: $$x \lor y := \max(x,y).$$ Then $(S, \cdot, \lor)$ is a semilattice-ordered semigroup. For every $x, n\in\mathbb{N}_+$, $$x^{(n)} = x \lor x^2 \lor \ldots \lor x^n = x^n$$ and hence $$\sqrt[\lor]{A} = \{x \in \mathbb{N}_+ \mid \exists n \in\mathbb{N}_+ \text{ such that } x^n \in A\}$$ for each subset $A \subseteq \mathbb{N}_+$. Now let $A := \{4\}$ and $B := \{9\}$. Then $$\sqrt[\lor]{AB} = \{6, 36\}, \qquad \sqrt[\lor]{\sqrt[\lor]{A}\sqrt[\lor]{B}} = \{6, 12, 18, 36\},$$
    so the join-radical closure is not multiplicative. 
\end{example}

\subsection{Join-bi-ideal closure} 
Bi-ideals are classical objects in semigroup theory. They were introduced by Good and Hughes in \cite{good1952associated} in the setting of semigroups and extended by Ponizovskii, Kehayopulu and Tsingelis to ordered semigroups in \cite{Kehayopulu}. Since then, bi-ideals have been studied extensively in numerous contexts, including fuzzy bi-ideals \cite{KEHAYOPULU200513, Gaketem2023ONEB}, intuitionistic fuzzy bi-ideals \cite{Jun, Yongfa2007}, fuzzy generalized bi-ideals \cite{Khan} and the algebraic structure of the family of all bi-ideals \cite{Mallick}. 

In the setting of ordered semigroups, a bi-ideal is usually defined as a subsemigroup $B$ of a semigroup $S$ that is downward closed and satisfies the condition $BSB \subseteq B$. On the other hand, in a semilattice-ordered semigroup it is natural to require ideal-like subsets to be closed under finite joins. This motivates the following definition.  
\begin{definition}\label{definition_join_bi_ideal}
    Let $S$ be a semilattice-ordered semigroup. A \emph{join-bi-ideal} of $S$ is a subset $B \subseteq S$ such that: 
    \begin{enumerate}[label=(\arabic*)]
        \item $\downarrow B = B$, 
        \item $B^{\lor} = B$, 
        \item $BB \subseteq B$, 
        \item $BSB \subseteq B$. 
    \end{enumerate}
\end{definition}

It can be easily verified that each condition in Definition \ref{definition_join_bi_ideal} is preserved under arbitrary intersections, so we obtain the following: 
\begin{prop}
    The intersection of any family of join-bi-ideals is a join-bi-ideal. 
\end{prop}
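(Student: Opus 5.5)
Fix an arbitrary family $\{B_i\}_{i\in I}$ of join-bi-ideals of $S$ and put $B := \bigcap_{i\in I} B_i$, with the convention that the empty intersection is $S$. The plan is to verify the four conditions of Definition \ref{definition_join_bi_ideal} for $B$ one at a time. Each condition is either a closure requirement of the form ``certain elements built from elements of $B$ (and possibly arbitrary elements of $S$) lie in $B$'' or a downward-closure requirement. Requirements of this kind pass to intersections, because each $B_i$ satisfies them.

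For the degenerate case $I=\varnothing$, I would check directly that $S$ itself is a join-bi-ideal. Trivially $\downarrow S = S$ and $S^\lor = S$. Also $SS\subseteq S$ and $SSS\subseteq S$ because $S$ is closed under multiplication.

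For $I \neq \varnothing$, the four conditions go as follows.

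For (1), let $x\le b$ with $b\in B$. Then $b\in B_i$ for each $i$, so $x\in \downarrow B_i = B_i$. Hence $x\in B$, and since $B\subseteq \downarrow B$ always holds, $\downarrow B = B$.

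For (2), take $b_1,\ldots,b_n\in B$. Each $b_k$ lies in every $B_i$, so $b_1\lor\cdots\lor b_n\in B_i^\lor = B_i$ for every $i$, giving $B^\lor\subseteq B$. The reverse inclusion is clear (take $n=1$).

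For (3), if $b,b'\in B$ then $bb'\in B_iB_i\subseteq B_i$ for every $i$, so $BB\subseteq B$.

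For (4), if $b,b'\in B$ and $s\in S$, then $bsb'\in B_iSB_i\subseteq B_i$ for every $i$, so $BSB\subseteq B$.

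There is no real obstacle in this argument. The only point needing care is the convention for the empty family, which the direct check above handles. If instead only nonempty families are intended, that case can simply be omitted.

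Note that $\varnothing$ may itself be a join-bi-ideal, since all four conditions hold vacuously for it. So the intersection being empty causes no issue.
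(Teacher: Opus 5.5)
Your proof is correct and takes the same approach as the paper, which simply states that each of the four conditions in Definition~\ref{definition_join_bi_ideal} is preserved under arbitrary intersections; you carry out that check condition by condition. Your separate treatment of the empty family (whose intersection is $S$, itself a join-bi-ideal) is a harmless extra detail the paper leaves implicit, and it is never needed for $\mathcal{B}(A)$, since $S$ always belongs to that family.
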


\noindent Consequently, for each subset $A \subseteq S$, we may define
\begin{equation*}
    \mathcal{B}(A) := \bigcap\{B \subseteq S \mid B \text { is a join-bi-ideal and } A \subseteq B\}. 
\end{equation*}
Equivalently, $\mathcal{B}(A)$ is the smallest join-bi-ideal containing $A$. The assignment $$A \mapsto \mathcal{B}(A)$$ defines a closure operator on $S$ (called \emph{join-bi-ideal closure}). 
\begin{prop}\label{join_radical_is_contained_in_bi_ideal}
    Let $S$ be a semilattice-ordered semigroup. Then for every subset $A \subseteq S$, $$\sqrt[\lor]{A} \subseteq A^\mathrm{dj} \subseteq \mathcal{B}(A).$$ 
\end{prop}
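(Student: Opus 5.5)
We prove the two inclusions separately.

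\emph{First inclusion.} Let $x \in \sqrt[\lor]{A}$, so $x^{(n)} \in A$ for some $n \in \mathbb{N}_+$. By definition $x^{(n)} = x \lor x^2 \lor \ldots \lor x^n$, so $x \leqslant x^{(n)}$. Since $x^{(n)} \in A \subseteq A^\lor$, Proposition \ref{equivalent_form_of_downward_join_closure} gives $x \in \downarrow(A^\lor) = A^\mathrm{dj}$. Equivalently, $A^\mathrm{dj}$ is a downset containing $A$, and by monotonicity and Lemma \ref{join_radical_doesnt_change_downsets},
\[
\sqrt[\lor]{A} \subseteq \sqrt[\lor]{A^\mathrm{dj}} = A^\mathrm{dj}.
\]

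\emph{Second inclusion.} Here we use the remark after Proposition \ref{equivalent_form_of_downward_join_closure}: $A^\mathrm{dj}$ is the least subset of $S$ containing $A$ that is both a downset and closed under finite joins. The set $\mathcal{B}(A)$ contains $A$, and conditions (1) and (2) of Definition \ref{definition_join_bi_ideal} say that it is a downset closed under finite joins. Hence $A^\mathrm{dj} \subseteq \mathcal{B}(A)$.

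If we do not want to rely on that remark, the direct argument is short. Take $x \in A^\mathrm{dj}$, so $x \leqslant a_1 \lor \ldots \lor a_n$ with $a_i \in A \subseteq \mathcal{B}(A)$. Condition (2) gives $a_1 \lor \ldots \lor a_n \in \mathcal{B}(A)$, and condition (1) then gives $x \in \mathcal{B}(A)$.

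There is no real obstacle; the only point needing care is the empty set. For $A = \varnothing$ we have $A^\lor = \varnothing$, so $A^\mathrm{dj} = \varnothing$, and also $\sqrt[\lor]{\varnothing} = \varnothing$. Both inclusions therefore hold trivially.
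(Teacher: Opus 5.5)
Your proof is correct and follows essentially the same route as the paper: the first inclusion rests on $x \leqslant x^{(n)}$ (equivalently, monotonicity of $\sqrt[\lor]{-}$ plus Lemma \ref{join_radical_doesnt_change_downsets} applied to the downset $A^\mathrm{dj}$, which is exactly the paper's argument), and the second on $\mathcal{B}(A)$ being a downset closed under finite joins that contains $A$. Your separate treatment of $A = \varnothing$ is harmless but unnecessary, since the general argument already covers that case.
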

\begin{proof}
    Clearly $A \subseteq A^\mathrm{dj}$ and hence $\sqrt[\lor]{A} \subseteq \sqrt[\lor]{A^\mathrm{dj}}$. Since $A^\mathrm{dj}$ is in particular a downset, Lemma \ref{join_radical_doesnt_change_downsets} yields $\sqrt[\lor]{A} \subseteq \sqrt[\lor]{A^\mathrm{dj}} = A^\mathrm{dj}$. Since $\mathcal{B}(A)$ is a join-bi-ideal containing $A$, it is in particular a downset closed under finite joins. Therefore, $A^\mathrm{dj} = \downarrow(A^\lor) \subseteq \mathcal{B}(A).$
\end{proof}

\begin{prop}
    Let $S$ be a semilattice-ordered semigroup. Then for every $A \subseteq S$, 
    \begin{enumerate}[label=(\arabic*)]
        \item $\sqrt[\lor]{\mathcal{B}(A)} = \mathcal{B}(A) = \mathcal{B}({\sqrt[\lor]{A}})$,
        \item $\sqrt[\lor]{A^\mathrm{dj}} = A^\mathrm{dj} = (\sqrt[\lor]{A})^\mathrm{dj}$.
    \end{enumerate} 
\end{prop}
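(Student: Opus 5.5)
The plan is to derive both chains of equalities from two facts already available: Lemma \ref{join_radical_doesnt_change_downsets}, which says the join-radical closure fixes every downset, and Proposition \ref{join_radical_is_contained_in_bi_ideal}, which gives $\sqrt[\lor]{A} \subseteq A^\mathrm{dj} \subseteq \mathcal{B}(A)$. Beyond these, only the standard closure-operator properties (extension, order-preservation, idempotence) are needed.

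For the first equalities in (1) and (2), note that $\mathcal{B}(A)$ is a join-bi-ideal and so a downset by condition (1) of Definition \ref{definition_join_bi_ideal}. Likewise $A^\mathrm{dj} = \downarrow(A^\lor)$ is a downset by Proposition \ref{equivalent_form_of_downward_join_closure}. Lemma \ref{join_radical_doesnt_change_downsets} then gives $\sqrt[\lor]{\mathcal{B}(A)} = \mathcal{B}(A)$ and $\sqrt[\lor]{A^\mathrm{dj}} = A^\mathrm{dj}$ at once.

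For the second equalities, take $\mathcal{B}(\sqrt[\lor]{A}) = \mathcal{B}(A)$ first. Extension gives $A \subseteq \sqrt[\lor]{A}$, so order-preservation yields $\mathcal{B}(A) \subseteq \mathcal{B}(\sqrt[\lor]{A})$. For the reverse inclusion, Proposition \ref{join_radical_is_contained_in_bi_ideal} gives $\sqrt[\lor]{A} \subseteq \mathcal{B}(A)$. Applying $\mathcal{B}$ and using idempotence gives $\mathcal{B}(\sqrt[\lor]{A}) \subseteq \mathcal{B}(\mathcal{B}(A)) = \mathcal{B}(A)$.

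The identity $(\sqrt[\lor]{A})^\mathrm{dj} = A^\mathrm{dj}$ follows by the same argument. We have $A \subseteq \sqrt[\lor]{A}$, hence $A^\mathrm{dj} \subseteq (\sqrt[\lor]{A})^\mathrm{dj}$. Conversely, $\sqrt[\lor]{A} \subseteq A^\mathrm{dj}$ by Proposition \ref{join_radical_is_contained_in_bi_ideal}, so $(\sqrt[\lor]{A})^\mathrm{dj} \subseteq (A^\mathrm{dj})^\mathrm{dj} = A^\mathrm{dj}$.

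I do not expect a real obstacle; the argument is essentially bookkeeping. The one point to watch is the case $A = \varnothing$. If $\mathcal{B}(\varnothing) = \varnothing$ (the empty set is vacuously a join-bi-ideal) and $\varnothing^\mathrm{dj} = \varnothing$, then every equality reads $\varnothing = \varnothing$. In any case the general argument above covers it, since it never used nonemptiness.
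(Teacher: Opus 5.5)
Your proposal is correct and follows essentially the same route as the paper: the inclusion $\sqrt[\lor]{A} \subseteq \mathcal{B}(A)$ (resp.\ $\sqrt[\lor]{A} \subseteq A^\mathrm{dj}$) from Proposition~\ref{join_radical_is_contained_in_bi_ideal}, together with extension, order-preservation and idempotence, gives the second equalities. Lemma~\ref{join_radical_doesnt_change_downsets}, applied to the downsets $\mathcal{B}(A)$ and $A^\mathrm{dj}$, gives the first; your explicit treatment of part (2) just writes out what the paper calls analogous.
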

\begin{proof}
    (1) Let $A$ be a subset of $S$. By Proposition \ref{join_radical_is_contained_in_bi_ideal}, we have $\sqrt[\lor]{A} \subseteq \mathcal{B}(A)$. Applying closure $\mathcal{B}$, we get $$\mathcal{B}(\sqrt[\lor]{A}) \subseteq \mathcal{B}(\mathcal{B}(A)) = \mathcal{B}(A).$$ On the other hand, $A \subseteq \sqrt[\lor]{A}$ and hence $\mathcal{B}(A) \subseteq \mathcal{B}(\sqrt[\lor]{A})$. Therefore, $\mathcal{B}(A) = \mathcal{B}(\sqrt[\lor]{A})$. Since $\mathcal{B}(A)$ is a join-bi-ideal, it is in particular a downset. By Lemma \ref{join_radical_doesnt_change_downsets}, we obtain $$\sqrt[\lor]{\mathcal{B}(A)} = \mathcal{B}(A) = \mathcal{B}(\sqrt[\lor]{A}),$$ as required. \\
    (2) In this case the proof is analogous to the proof of (1). 
\end{proof}

\begin{prop}\label{join_bi_ideal_closure_is_algebraic}
    Let $S$ be a semilattice-ordered semigroup. The closure operator $\mathcal{B} : \mathcal{P}(S) \rightarrow \mathcal{P}(S)$ is algebraic. 
\end{prop}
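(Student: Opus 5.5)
The plan is to show that the union
\[
U := \bigcup\{\mathcal{B}(F) \mid F \in \mathcal{P}_\mathrm{fin}(A)\}
\]
is itself a join-bi-ideal containing $A$. The inclusion $U \subseteq \mathcal{B}(A)$ is immediate from order-preservation. Conversely, if $U$ is a join-bi-ideal containing $A$, then minimality of $\mathcal{B}(A)$ gives $\mathcal{B}(A) \subseteq U$, and algebraicity follows.

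The key observation is that the family $\{\mathcal{B}(F) \mid F \in \mathcal{P}_\mathrm{fin}(A)\}$ is upward directed. For finite $F_1, F_2 \subseteq A$, the set $F_1 \cup F_2$ is again a finite subset of $A$, and by order-preservation both $\mathcal{B}(F_1)$ and $\mathcal{B}(F_2)$ are contained in $\mathcal{B}(F_1 \cup F_2)$. Each defining condition of a join-bi-ideal in Definition \ref{definition_join_bi_ideal} involves only finitely many elements at once, so it passes to directed unions. I would verify the four conditions in turn.

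\begin{enumerate}[label=(\arabic*)]
\item If $x \leqslant y \in \mathcal{B}(F)$, then $x \in \mathcal{B}(F)$, since $\mathcal{B}(F)$ is a downset.
\item Given $x_1, \ldots, x_n \in U$ with $x_i \in \mathcal{B}(F_i)$, put $F := F_1 \cup \ldots \cup F_n$. All $x_i$ lie in $\mathcal{B}(F)$, hence so does $x_1 \lor \ldots \lor x_n$.
\item For $x, y \in U$, the same argument with $F := F_1 \cup F_2$ shows $xy \in \mathcal{B}(F)$.
\item For $x, y \in U$ and $s \in S$, again with $F := F_1 \cup F_2$, we get $xsy \in \mathcal{B}(F)\,S\,\mathcal{B}(F) \subseteq \mathcal{B}(F)$.
\end{enumerate}

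Finally, $A \subseteq U$ because each $a \in A$ lies in $\mathcal{B}(\{a\})$. Hence $U$ is a join-bi-ideal containing $A$, which completes the argument.

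There is no real obstacle here. The only point requiring care is the directedness step, that is, merging finitely many witnessing finite sets into one. The point is that the element $s$ in condition (4) ranges over all of $S$ rather than $A$, so it need not be captured by any finite subset of $A$.
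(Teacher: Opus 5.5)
Your proposal is correct and follows essentially the same route as the paper: you show that the union $\bigcup\{\mathcal{B}(F) \mid F \in \mathcal{P}_\mathrm{fin}(A)\}$ is a join-bi-ideal containing $A$ by merging the witnessing finite sets into one finite subset of $A$, and then conclude by minimality of $\mathcal{B}(A)$ and order-preservation. The only cosmetic difference is that you check closure under $n$-fold joins directly, whereas the paper checks binary joins $z \lor w$ via $K := G \cup H$, which is equivalent.
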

\begin{proof}
    Let $A$ be a subset of $S$ and $B := \bigcup\{\mathcal{B}(F) \mid F \in \mathcal{P}_\mathrm{fin}(A)\}$. Clearly $A \subseteq B$, since for every $a \in A$ we have $a \in \mathcal{B}(\{a\}) \subseteq B$. We claim that $B$ is a join-bi-ideal. 
    
    If $x \in B$ and $y \leqslant x$, then $x \in \mathcal{B}(F)$ for some finite subset $F \subseteq A$. Since $\mathcal{B}(F)$ is a downset, $y\in\mathcal{B}(F) \subseteq B$ and thus $B$ is a downset. Now take elements $z, w \in B$. There exist finite subsets $G, H \subseteq A$ such that $z \in \mathcal{B}(G)$ and $w \in \mathcal{B}(H)$. Define a finite set $K := G \cup H$. By order-preservation of $\mathcal{B}$, we have $\mathcal{B}(G), \mathcal{B}(H) \subseteq \mathcal{B}(K)$ and hence $z, w \in \mathcal{B}(K)$. Then we get $z \lor w \in \mathcal{B}(K)$, $zw \in \mathcal{B}(K)$ and $zsw \in \mathcal{B}(K)$ for any element $s \in S$. Since $\mathcal{B}(K) \subseteq B$, we conclude that $B$ is a join-bi-ideal. 
    
    By minimality of $\mathcal{B}(A)$, we get $\mathcal{B}(A) \subseteq B$. The reverse inclusion $B \subseteq \mathcal{B}(A)$ holds because for every finite subset $F \subseteq A$ we have $\mathcal{B}(F) \subseteq \mathcal{B}(A)$ by order-preservation of $\mathcal{B}$. Hence $\mathcal{B}(A) = B$, proving algebraicity. 
\end{proof}
\begin{example}
    Let $(L, \lor, \land)$ be a distributive lattice and put $S := L$ with $x \cdot y := x \land y$. Then $(S, \cdot, \lor)$ is a semilattice-ordered semigroup. In this setting a subset $B \subseteq L$ is a join-bi-ideal if and only if it is a possibly empty lattice ideal. Hence for every $A \subseteq L$ we have
    \begin{equation*}
        \mathcal{B}(A) = \downarrow(A^\lor) = \{x \in L \mid \exists a_1, \ldots, a_n \in A \text{ such that } x \leqslant a_1 \lor \ldots \lor a_n\}.
    \end{equation*}
    Consequently, 
    \begin{equation*}
        X = \{I \subseteq L \mid I = \downarrow I = I^\lor\}  
    \end{equation*}
    is precisely the set of all ideals of $L$, endowed with the induced hull-kernel topology. Moreover, for $I, J \in X$ we have 
    \begin{equation*}
        I \star J = \mathcal{B}(IJ) = \mathcal{B}(\{x \land y \mid x \in I,\hspace{3pt} y \in J\}) = I \cap J. 
    \end{equation*}
    Indeed, $I \cap J$ is an ideal and contains all meets $x \land y$. Conversely, every element of $I \cap J$ is a meet of itself with itself. 
    
    We claim that $\star$ is spectral. Indeed, for every finite $F \subseteq L$, 
    \begin{equation*}
        F \subseteq I \star J \iff F \subseteq I \cap J \iff F \subseteq I \text{ and } F \subseteq J. 
    \end{equation*}
    Thus, the condition of Proposition \ref{criterion_for_joint_spectrality} holds with a single pair $(G, H) = (F, F)$. Hence $\star$ is spectral. 
\end{example}
\begin{example}\label{join_bi_ideal_closure_is_not_multiplicative}
    Let $S := \{0,1,2\}$ with multiplication given by 
    \begin{table}[H]
        \centering
            \begin{tabular}{c|ccc}
                $\cdot$ & 0 & 1 & 2 \\ \hline
                0 & 0 & 0 & 0 \\
                1 & 0 & 1 & 2 \\
                2 & 2 & 2 & 2 
            \end{tabular}
    \end{table}
    and join operation defined as follows: $$x\lor y := \max(x,y).$$ Then $(S, \cdot, \lor)$ is a semilattice-ordered semigroup. The sets $\varnothing$, $\{0\}$ and $S$ are the only join-bi-ideals of $S$. Let $A := \{1\}$ and $B := \{0\}$. Then $$\mathcal{B}(AB) = \{0\}, \qquad \mathcal{B}\big(\mathcal{B}(A)\mathcal{B}(B)\big) = S,$$ so the join-bi-ideal closure is not multiplicative. 
\end{example}
The three closure operators considered in this section have different multiplicative behaviour. All of them are algebraic, whereas the downward-join closure is always multiplicative and the join-radical and join-bi-ideal closures need not be multiplicative. Nevertheless, algebraicity alone is enough for the associated spaces of closed sets to be spectral and for general spectrality criteria of Section \ref{topological_semigroups_whose_underlying_space_is_spectral} to apply. We now use these observations in several concrete situations. 
\section{Applications}\label{applications}
In this section we apply the general results of Sections \ref{spectral_spaces_arising_from_closures} and \ref{topological_semigroups_whose_underlying_space_is_spectral} to the closure operators introduced in the preceding section. For these closure operators, we write $$X_\mathrm{dj} := \{A \in \mathcal{P}(S) \mid A^\mathrm{dj} = A\}, \qquad X_{\sqrt[\lor]{\ }} := \{A \in \mathcal{P}(S) \mid \sqrt[\lor]{A} = A\}$$ and $$X_{\mathcal{B}} := \{A \in\mathcal{P}(S) \mid \mathcal{B}(A) = A\}.$$ We begin by comparing the three spaces defined above.
\begin{prop}
    Let $S$ be a semilattice-ordered semigroup. Then $$X_\mathcal{B} \subseteq X_\mathrm{dj} \subseteq X_{\sqrt[\lor]{\ }}$$ and both inclusion maps $$X_\mathcal{B} \hookrightarrow X_\mathrm{dj}, \qquad X_\mathrm{dj} \hookrightarrow X_{\sqrt[\lor]{\ }}$$ are spectral.
\end{prop}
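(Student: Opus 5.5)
The plan has two parts: first the set-theoretic inclusions, then spectrality of the inclusion maps.

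\emph{Inclusions.} If $A \in X_\mathcal{B}$, then $A = \mathcal{B}(A)$. Proposition \ref{join_radical_is_contained_in_bi_ideal} gives $A \subseteq A^\mathrm{dj} \subseteq \mathcal{B}(A) = A$, so $A^\mathrm{dj} = A$ and $A \in X_\mathrm{dj}$. Similarly, if $A \in X_\mathrm{dj}$, the same proposition gives $A \subseteq \sqrt[\lor]{A} \subseteq A^\mathrm{dj} = A$, so $A \in X_{\sqrt[\lor]{\ }}$. (Alternatively, every $\mathrm{dj}$-closed set is a downset, and Lemma \ref{join_radical_doesnt_change_downsets} applies.)

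\emph{Spectrality.} All three closure operators are algebraic by Propositions \ref{downward_join_closure_is_algebraic}, \ref{join_radical_closure_is_algebraic} and \ref{join_bi_ideal_closure_is_algebraic}. Theorem \ref{construction_of_spectral_space} therefore shows that each of the three spaces is spectral and retrocompact in $\mathcal{P}(S)$, each carrying the subspace topology of the hull-kernel topology.

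Let $\iota : Y \hookrightarrow Z$ be either inclusion. Then $Y \subseteq Z \subseteq \mathcal{P}(S)$, and $Y$ carries the subspace topology of $Z$, since both are subspaces of $\mathcal{P}(S)$. Continuity of $\iota$ is therefore automatic. It remains to show that $\iota^{-1}(V) = V \cap Y$ is quasi-compact for every quasi-compact open $V \subseteq Z$.

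\begin{enumerate}[label=(\arabic*)]
\item By the description in Section \ref{background}, the quasi-compact open subsets of $Z$ form a basis. By Lemma \ref{quasi_compactness_of_subbasic_opens}, the sets $D_Z(F)$ are quasi-compact. Since $Z$ is spectral, finite intersections of these are again quasi-compact.
\item Hence every quasi-compact open $V \subseteq Z$ is a finite union of finite intersections of sets $D_Z(F)$. Indeed, $V$ is a union of basic opens of the form $\bigcap_l D_Z(F_l)$, and quasi-compactness lets us pass to a finite subcover.
\item Consequently $V = U \cap Z$ for the quasi-compact open set $U = \bigcup_k \bigcap_l D(F_{kl})$ of $\mathcal{P}(S)$, and then $V \cap Y = U \cap Y$.
\item Retrocompactness of $Y$ in $\mathcal{P}(S)$ (Theorem \ref{construction_of_spectral_space}) now shows that $V \cap Y$ is quasi-compact.
\end{enumerate}

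The only step needing care is (2), where one must check that an arbitrary quasi-compact open subset of $Z$ really has this finite form. This follows directly from $\{D_Z(F)\}_{F \in \mathcal{P}_\mathrm{fin}(S)}$ being a subbasis consisting of quasi-compact sets. The rest is bookkeeping.
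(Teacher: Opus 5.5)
Your proof is correct and follows essentially the paper's argument: the inclusions come from Proposition \ref{join_radical_is_contained_in_bi_ideal}, and spectrality rests on writing each quasi-compact open subset of the larger space as a finite union of finite intersections of the traces of the subbasic sets $D(F)$. The only cosmetic difference is in the last step. You conclude via retrocompactness of the smaller space in $\mathcal{P}(S)$ (Theorem \ref{construction_of_spectral_space}), whereas the paper observes $\iota^{-1}(D_{X_\mathrm{dj}}(F)) = D_{X_\mathcal{B}}(F)$ and invokes Lemma \ref{quasi_compactness_of_subbasic_opens} together with spectrality of $X_\mathcal{B}$, which is exactly how that retrocompactness was established in the first place.
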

\begin{proof}
    The inclusions $X_\mathcal{B} \subseteq X_\mathrm{dj} \subseteq X_{\sqrt[\lor]{\ }}$ follow clearly from Proposition \ref{join_radical_is_contained_in_bi_ideal}, so it remains to show that the inclusion maps are spectral. We prove it for $\iota : X_\mathcal{B} \hookrightarrow X_\mathrm{dj}$, since the proof for $X_\mathrm{dj} \hookrightarrow X_{\sqrt[\lor]{\ }}$ is identical. Let $F$ be a finite subset of $S$. Then $$\iota^{-1}\big(D_{X_\mathrm{dj}}(F)\big) = D_{X_\mathrm{dj}}(F) \cap X_\mathcal{B} = D_{X_\mathcal{B}}(F).$$ Since $D_{X_\mathcal{B}}(F)$ is quasi-compact and open, and every quasi-compact open subset of $X_\mathrm{dj}$ is a finite union of finite intersections of sets of the form $D_{X_\mathrm{dj}}(F)$, the map $\iota$ is spectral. 
\end{proof}

\begin{prop}
    Let $S$ be a semilattice-ordered semigroup. If the poset $(S, \leqslant)$ is well-quasi-ordered, then the maps $$\star_\mathrm{dj} : X_\mathrm{dj} \times X_\mathrm{dj} \rightarrow X_\mathrm{dj}, \qquad A \star_\mathrm{dj} B = (AB)^\mathrm{dj},$$ and $$\star_\mathcal{B} : X_\mathcal{B} \times X_\mathcal{B} \rightarrow X_\mathcal{B}, \qquad A \star_\mathcal{B} B = \mathcal{B}(AB),$$ are spectral. 
\end{prop}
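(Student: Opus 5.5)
The plan is to apply Corollary \ref{algebraicity_order_compatibility_and_well_quasi_order_on_original_set_imply_spectrality_of_star} twice, once for $\mathrm{cl} = (-)^\mathrm{dj}$ and once for $\mathrm{cl} = \mathcal{B}$. That corollary needs three inputs for a closure operator: it must be algebraic, it must be order-compatible, and $(S, \leqslant)$ must be well-quasi-ordered. The last is our hypothesis. Once the other two are verified, the corollary gives directly that $\star_\mathrm{dj}$ and $\star_\mathcal{B}$ are spectral, since these are exactly the induced operations $A \star B = (AB)^\mathrm{cl}$ on the corresponding spaces $X_\mathrm{dj}$ and $X_\mathcal{B}$.

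Algebraicity is already established. Proposition \ref{downward_join_closure_is_algebraic} gives it for the downward-join closure, and Proposition \ref{join_bi_ideal_closure_is_algebraic} gives it for the join-bi-ideal closure.

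For order-compatibility, I will use Proposition \ref{order_compatibility_is_equivalent_to_all_closed_subsets_being_down_sets}, which reduces the question to showing that every closed set is a downset.
\begin{enumerate}[label=(\roman*)]
\item If $A \in X_\mathrm{dj}$, then $A = A^\mathrm{dj} = \downarrow(A^\lor)$ by Proposition \ref{equivalent_form_of_downward_join_closure}. This set is a downset by construction.
\item If $A \in X_\mathcal{B}$, then $A = \mathcal{B}(A)$ is a join-bi-ideal. Condition (1) of Definition \ref{definition_join_bi_ideal} then says that $A$ is a downset.
\end{enumerate}
Alternatively, one can check order-compatibility directly. If $x \leqslant y$, then $x \in \downarrow\{y\} \subseteq \{y\}^\mathrm{dj}$, and also $x \in \mathcal{B}(\{y\})$ because $\mathcal{B}(\{y\})$ is a downset containing $y$. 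The conclusion then follows from the same idempotence argument used in that proposition.

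No step is a real obstacle, because all ingredients are already available. The only thing to watch is that the corollary is applied with the correct closure operator, so that its space $X$ and operation $\star$ are exactly $X_\mathrm{dj}$, $\star_\mathrm{dj}$ and $X_\mathcal{B}$, $\star_\mathcal{B}$. Multiplicativity is not needed anywhere; this matters because $\mathcal{B}$ is not multiplicative (Example \ref{join_bi_ideal_closure_is_not_multiplicative}).
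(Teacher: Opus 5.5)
Your proposal is correct and follows essentially the same route as the paper's proof. The paper also notes that all $\mathrm{dj}$-closed and $\mathcal{B}$-closed sets are downsets, obtains order-compatibility from Proposition \ref{order_compatibility_is_equivalent_to_all_closed_subsets_being_down_sets}, and then applies Corollary \ref{algebraicity_order_compatibility_and_well_quasi_order_on_original_set_imply_spectrality_of_star}, leaving implicit the algebraicity that you cite explicitly.
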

\begin{proof}
    Every $\mathcal{B}$-closed subset of $S$ and every $\mathrm{dj}$-closed subset of $S$ are downsets. By Proposition \ref{order_compatibility_is_equivalent_to_all_closed_subsets_being_down_sets}, both these closure operators are order-compatible. The conclusion now follows from Corollary \ref{algebraicity_order_compatibility_and_well_quasi_order_on_original_set_imply_spectrality_of_star}. 
\end{proof}
The preceding proposition gives a useful sufficient condition for spectrality of the map $\star: X \times X \to X$. In general, however, spectrality of all left and right translations does not imply spectrality of the binary operation itself. The following example shows that this may occur even for an algebraic multiplicative closure operator, namely for the downward-join closure introduced above. 
\begin{example}\label{separate_spectrality_does_not_imply_joint}
    Let $$E := \{d, \infty\} \cup \{e_n \mid n \in \mathbb{N}\}$$ be the join-semilattice in which $d < e_n < \infty$ for every $n\in\mathbb{N}$ and the elements $e_n$ are pairwise incomparable. We next define a join-semilattice $Z$. Put $$Z := \{0, 1\} \cup \{r_n, s_n \mid n \in \mathbb{N}\} \cup \{t_{nm} \mid n, m\in\mathbb{N}, n \neq m\}$$ with the order determined by $$0 < r_n < t_{nm} < 1, \qquad 0 < s_m < t_{nm} < 1$$ for all distinct $n, m\in\mathbb{N}$. Define join-homomorphisms $\rho, \sigma: E \to Z$ by
    $$\rho(x) = \begin{cases}
        0, & \text{for } x = d, \\
        r_n, & \text{for } x = e_n, \\
        1, & \text{for } x = \infty, 
    \end{cases}
    \qquad \sigma(x) = \begin{cases}
        0, & \text{for } x = d, \\ 
        s_n, & \text{for } x = e_n, \\ 
        1, & \text{for } x = \infty. 
    \end{cases}$$ Define a map $\mu: E \times E \to Z$ by $$\mu(x, y) := \rho(y) \lor \sigma(x).$$ Since $\rho$ and $\sigma$ preserve joins, the map $\mu$ preserves joins in each variable. Let $S$ be the disjoint union $E \sqcup Z$, ordered by the given orders on $E$ and $Z$ and by declaring $z < e$ for all $z\in Z$, $e\in E$. Define multiplication on $S$ by 
    \begin{equation}\label{definition_of_multiplication}
        x\cdot y = \begin{cases}
        \mu(x, y), & \text{for } x, y \in E, \\ 
        0, & \text{otherwise.}
    \end{cases}
    \end{equation}
    The multiplication $\cdot$ is associative because every product of three elements of $S$ is equal to $0$. We next verify distributivity over joins. Let $x, y, z\in S$. If $x, y, z\in E$, then $$x(y \lor z) = \mu(x, y\lor z) = \mu(x, y) \lor \mu(x, z) = xy \lor xz,$$ since $\mu$ preserves joins in its second variable. If $x \in Z$ or both $y, z\in Z$, then $x(y \lor z) = xy = xz = 0$. Finally, if $x, y \in E$ and $z \in Z$, then $y \lor z = y$, because every element of $Z$ lies below every element of $E$. Hence $$x(y \lor z) = xy = xy \lor 0 = xy \lor xz.$$ The remaining case, where $x, z\in E$ and $y\in Z$, is analogous. The equality $(x \lor y)z = xz \lor yz$ also holds, as $\mu$ preserves joins in its first variable as well. Therefore, $(S, \cdot, \lor)$ is a semilattice-ordered semigroup. 
    
    For every subset $\varnothing \neq A \in \mathcal{P}(E)$, $A^{\mathrm{dj}}$ is of the form $\downarrow e$, where $e \in E$. Similarly, for every subset $\varnothing \neq B \in \mathcal{P}(Z)$, $B^\mathrm{dj}$ is of the form $\downarrow z$, where $z\in Z$. Now consider the downward-join closure on $S$. It can be easily seen that its corresponding spectral space $X$ is of the form $$X = \{\varnothing\} \cup \{\downarrow a \mid a \in S\}.$$ Since multiplication $\cdot: S \times S \to S$ is order-preserving, we get \begin{equation}\label{property_of_multiplication_on_X}
        (\downarrow a) \star (\downarrow b) = \downarrow(ab) 
    \end{equation}
    for all $a, b \in S$. 

    We now prove that every right translation is spectral. Since for every nonempty finite subset $F \subseteq S$ we have $$D_X(F) = D_X\Big(\bigvee F\Big),$$ it is enough to determine inverse images of sets $D_X(s)$, $s\in S$. Fix an element $J \in X$. If $J = \varnothing$, then $R_J$ is constantly equal to $\varnothing$, so it is spectral. Suppose that $J = \downarrow y$. If $y \in Z$, then by \eqref{definition_of_multiplication} and \eqref{property_of_multiplication_on_X}, $$R_J^{-1}\big(D_X(0)\big) = D_X(0) = \{\varnothing\}$$ and $$R_J^{-1}\big(D_X(s)\big) = X$$ for every $0 \neq s\in S$. Thus, $R_J$ is spectral. It remains to consider the case $y\in E$. For $z \in Z \setminus\{0\}$ put $$M_z(y) := \{x \in E \mid z \leqslant \mu(x, y)\}.$$ We claim that $M_z(y)$ is a finitely generated upset of $E$, with at most two minimal elements. If $y = d$, then $$M_z(d) = \begin{cases}
        \uparrow e_m, & \text{for } z = s_m, \\
        \uparrow\infty, & \text{otherwise}. 
    \end{cases}$$ If $y = \infty$, then $M_z(\infty) = E = \uparrow d$ for every $z \in Z\setminus\{0\}$. Finally, let $y = e_j$. If~$z \leqslant r_j$, then $M_z(e_j) = E = \uparrow d$. Assume that $z \not\leqslant r_j$. There exists at most one index $i \neq j$ such that $z \leqslant t_{ji}$. Therefore, it can be easily seen that $M_z(e_j) = \uparrow e_j$ or $M_z(e_j) = \uparrow\{e_j, e_i\}$, which proves the claim. Thus, we may choose a finite set $G_z(y)\subseteq E$ such that $|G_z(y)| \leqslant 2$ and $$M_z(y) = \bigcup_{g \in G_z(y)}\uparrow g.$$

    Let $s \in S$. If $s\in E$, then $s$ does not belong to any product $I \star J$ and in consequence $R_J^{-1}(D_X(s)) = X$. We also have $R_J^{-1}(D_X(0)) = D_X(0)$. Finally, take $z \in Z\setminus\{0\}$. We claim that for every $I \in X$, 
    \begin{equation}\label{claimed_equivalence}
    z \in I \star J \iff g \in I \text{ for some } g\in G_z(y).
    \end{equation}
    If $I = \varnothing$, the condition \eqref{claimed_equivalence} is immediate. If $I = \downarrow a$, where $a \in Z$, then $I \star J = \downarrow 0$, while $I$ does not contain any element of $E$. Thus, the condition \eqref{claimed_equivalence} holds again. Finally, if $I = \downarrow x$, where $x \in E$, then 
    \begin{align*}
        z \in I \star J &\iff z \leqslant \mu(x, y) \\ & \iff x\in M_z(y) \\ & \iff g \leqslant x \text{ for some } g\in G_z(y) \\ & \iff g\in I\text{ for some } g \in G_z(y). 
    \end{align*}
    Therefore, we get $$R_J^{-1}\big(D_X(z)\big) = \bigcap_{g\in G_z(y)} D_X(g),$$ which is quasi-compact and open. Thus, $R_J$ is spectral. 
    
    The proof for left translations is analogous, so it remains to show that $\star$ is not spectral. Assume towards a contradiction that $\star$ is spectral. Put $I_n := \downarrow e_n$ for each $n \in \mathbb{N}$ and $I_d := \downarrow d$. Since $I_d \star I_d = \downarrow 0 = \{0\}$, we have $1 \not\in I_d \star I_d$. By Proposition \ref{criterion_for_joint_spectrality}, there exists a finite family of pairs $(G_i, H_i)\in\mathcal{P}_\mathrm{fin}(S)\times\mathcal{P}_\mathrm{fin}(S)$, $i\in I$, such that for all $A, B \in X$, 
    \begin{equation}\label{finite_criterion_for_joint_spectrality}
        1 \in A \star B \iff \exists i\in I\text{ such that } G_i\subseteq A\text{ and } H_i\subseteq B.
    \end{equation}
    For every $n\in\mathbb{N}$, since $1 \in I_n\star I_n$, there exists $i_n \in I$ such that $G_{i_n}\subseteq I_n$ and $H_{i_n}\subseteq I_n$. By the pigeonhole principle, there exist an index $j$ and an infinite subset $W \subseteq \mathbb{N}$ such that $G_j\subseteq I_n$ and $H_j\subseteq I_n$ for every $n \in W$. Since $\bigcap_{n \in W}I_n = I_d$, we have $G_j\subseteq I_d$ and $H_j \subseteq I_d$. Applying \eqref{finite_criterion_for_joint_spectrality}, we obtain $1 \in I_d\star I_d$, a contradiction. Therefore, the map $\star: X \times X \to X$ is not spectral. 
\end{example}
\bibliographystyle{plain}
\bibliography{references}
\end{document}